\newtheorem{theorem}{Theorem}[section]
\newtheorem{lemma}[theorem]{Lemma}
\newtheorem{proposition}[theorem]{Proposition}
\newtheorem{corollary}[theorem]{Corollary}
\newtheorem{definition}[theorem]{Definition}
\numberwithin{equation}{section}
\newcommand{\tq}{\widetilde{\mc Q}}
\newcommand{\jn}{J^{\text{neum}}}
\newcommand{\mc}[1]{{\mathcal #1}}
\newcommand{\mf}[1]{{\mathfrak #1}}
\newcommand{\mb}[1]{{\mathbf #1}}
\newcommand{\bb}[1]{{\mathbb #1}}
\newcommand{\bs}[1]{{\boldsymbol #1}}
\newcommand{\<}{\langle}
\renewcommand{\>}{\rangle}
\renewcommand{\epsilon}{\varepsilon}
\newcommand{\ca}{\mathcal{A}}
\newcommand{\cb}{\mathcal{B}}
\newcommand{\cd}{\mathcal{D}}
\newcommand{\cl}{\mathcal{L}}
\newcommand{\cm}{\mathcal{M}}
\newcommand{\B}{\mathbb{B}}
\newcommand{\J}{\mathbb{J}}
\newcommand{\I}{\mathbb{I}}
\newcommand{\M}{\mathbb{M}}
\newcommand{\N}{\mathbb{N}}
\newcommand{\R}{\mathbb{R}}
\newcommand{\V}{\mathbb{V}}
\newcommand{\Z}{\mathbb{Z}}
\newcommand{\LL}{\mathbb{L}}
\newcommand{\Ss}{\mathbb{S}}
\let\b=\beta
\let\d=\delta
\let\ve=\varepsilon
\let\g=\gamma
\let\s=\sigma
\let\ve=\varepsilon
\let\l=\lambda
\let\n=\eta
\let\r=\rho
\let\G=\Gamma
\renewcommand{\L}{\Lambda}
\newcommand{\1}{\,\rlap{\small 1}\kern.13em 1}
\newcommand{\sqr}[2]{{\vcenter{\hrule height.#2pt%
                      \hbox{\vrule width.#2pt height#1pt\kern#1pt%
                            \vrule width.#2pt}%
                      \hrule height.#2pt}}}
\newcommand{\cqfd}{\hfill$\mathchoice\sqr46\sqr46\sqr{1.5}2\sqr{1}2$\par}
\renewcommand{\limsup}{\mathop{\overline{\hbox{\rm lim}}}}
\renewcommand{\liminf}{\mathop{\underline{\hbox{\rm lim}}}}
\newcommand{\rar}{\rightarrow}
\newcommand{\vecte}{{e}}
\newcommand{\Es}{\mathbb{E}}
\newcommand{\Pb}{\mathbb{P}}
\newcommand{\tbw}{{\widetilde {\mb W}}}
\begin{document}

\title[Empirical current for Kawasaki with Neumann Kac interaction] {Large deviations of the empirical current for
the boundary driven Kawasaki process with long range interaction}

\author[M. Mourragui]{Mustapha Mourragui}  \thanks{supported by ANR-2010-BLANC-0108,
Projet risc.} \address{Mustapha Mourragui
  \hfill\break\indent LMRS, UMR 6085, Universit\'e de Rouen,
  \hfill\break\indent Avenue de l'Universit\'e, BP.12, Technop\^ole du
  Madril\-let, \hfill\break\indent
F76801 Saint-\'Etienne-du-Rouvray, France.}
\email{Mustapha.Mourragui@univ-rouen.fr}

\date{\today}


\keywords{Kawasaki process, Neuman Kac potential, Empirical current,  Empirical density, Large deviations, Stationary
  nonequilibrium states.} 

\subjclass[2000]{Primary 82C22; Secondary 60F10, 82C35}

\begin{abstract}  
We consider a lattice gas evolving in a bounded cylinder of length
$2N + 1$ and interacting via a Neuman Kac interaction of range $N$, in contact
with particles reservoirs at different densities. We investigate the associated
law of large numbers and large deviations of the empirical current
and of the density. The hydrodynamic limit for the empirical density, obtained in the diffusive
scaling, is given by a nonlocal, nonlinear evolution equation with Dirichlet
boundary conditions.
\end{abstract}

\maketitle
\thispagestyle{empty}

\section{Introduction}
\label{sec1}
The large deviations principle is an inportant topic of interest for the study
of macroscopic properties of non-equilibrium systems. In the last years, many papers have been devoted to the subject.
We just quote a few of them where the issue is addressed in the context of lattice gas dynamics for which large deviation principles can be derived in
the hydrodynamic scaling, \cite{bdgjl10, bd, Der, bdgjl7} and references therein.
Typical examples are systems in contact with two thermostats at different temperatures or with two reservoirs at 
different densities.
A mathematical model for such systems is provided by reversible systems of hopping dynamics combined with
the action of an external mechanism of creation and annihilation of particles, modeling the
exchange reservoirs. 
The action of the reservoirs makes the full process non reversible. A principal generic feature of these systems is that 
they exhibit long range correlations in their steady state.

In this paper we consider a microscopic conservative system, with long range interaction with open boundaries. 
The system is contained in a cylinder $\L_N=\{-N,\cdots,N\}\times\bb T^{d-1}_N$ of length $2N+1$ with axis in direction $u_1$, 
with $\bb T_N^{d-1}$ the  $(d-1)$-dimensional microscopic torus of length $2N+1$ and $N$ a scaling parameter, namely we 
impose periodic boundary conditions in all directions but $u_1$. 
In the bulk, particles evolve according to conservative dynamics (Kawasaki) perturbed by a modified 
version of Kac potential which we call {\sl Neuman Kac potential}.
The Kac potentials $J_N$ are two-body interactions with range $N$ and strength $N^{-d}$: $J_N (u) = N^{-d} J (u)$, $u\in\R^d$, 
where $J$ is a smooth function with compact support.
They have been introduced in \cite{kuh}, and then generalized in \cite{lp}, to present a rigorous derivation 
of the van der Waals theory of a gas-liquid phase transition. 
There have been many interesting results on Kac Ising spin systems in equilibrium 
statistical mechanics. We refer for a survey to the book \cite{pr}.
The so called {\sl Neuman Kac potential}, $\jn_N (u) = N^{-d} \jn (u)$, $u\in\R^d$ (see \eqref{tm1} below) is the modification of the Kac potential that
takes into account the fact that the particles are confined in a bounded domain.

Given $\beta\ge 0$ and a chemical potential $\lambda \in \R$, we consider the Hamiltonian
\begin{equation*}
H_N^\b(\eta)= - \b \sum_{x,y\in\Lambda_N}  \jn_N(x,y) \eta (x)\eta(y)\, +\, \, \lambda \sum_{x\in\Lambda_N}\eta(x)\, ,
\end{equation*}
where $\eta = (\eta (x)\, ,\;  x \in \Z )$, $\eta(x) \in {0, 1}$; $\eta(x) = 1$ if there is a particle at site $x$ and 
$\eta(x) = 0$ if site $x$ is empty. 
One can construct in a standard way an
evolution conserving the total number of particles, the so-called Kawasaki dynamics, which can
be described as follows. Particles attempt to jump to nearest neighbour sites at rates depending
on the energy difference before and after the exchange, provided the nearest neighbour target
sites are empty; attempted jumps to occupied sites are suppressed. The rates are chosen in such
a way that the system satisfies a detailed balance condition with respect to a family of Gibbs
measures, parametrized by the so-called chemical potential $\lambda\in \R$ and fixed $\beta$.
To model the presence of the reservoirs, we superimpose at the boundary to the bulk dynamics
a birth and death process. For a fixed smooth function $b(\cdot)$ defined on the boundary of the domain,
the rates of this birth and death process are
chosen so that a Bernoulli product measure of varying parameter $b(\cdot)$
is reversible for it. This latter dynamics is of course not conservative and keeps the fixed
value of the density equal to $b(\cdot)$ at the boundary. 
This dynamics defines an {\it irreducible}  Markov  jump process on a finite state space;
its stationary measure  $\mu_N^{stat, b(\cdot)}$  is unique.
There is a flow of density through the full system and $\mu_N^{stat, b(\cdot)}$
encodes its long time behavior. The full dynamics is reversible  only if $ \beta=0$ and $ b(\cdot)$ is constant.
We introduce the empirical density $\pi_t^N$ of particles and the integrated empirical current $\mb W_t^N$, 
which measures the total net flow of particles in the time interval
$[0, t]$, associated to a trajectory $(\eta_\cdot)$. 

We analyze here the behavior as $N \uparrow \infty$ of the system when the time is rescaled by $N^2$ ({\it diffusive} {\it limit}).
Our purpose is to investigate the behavior of the current of particles. Problems of this kind have been studied in 
\cite{bdgjl2} and in \cite{bl2}. In both documents the large deviations rate functionals are convex. 
The paper \cite{bdgjl2}, studied
the simple exclusion process, in the torus with periodic
conditions. The paper \cite{bl2} is concerned by the reaction diffusion process, in a one-dimensional interval 
with two types of currents (conservative and non
conservative); some conditions on the convexity on the functionals were imposed.
Our goal is to extend these results to the $d$-dimentional boundary driven systems with
long range interactions, for which the dynamical large deviations functionals are
non-convex.

For important classes of models, the hydrodynamic limit and dynamical large deviations for the empirical density
have been proven, see for example \cite{kov, qrv} for equilibrium dynamics and \cite{bdgjl7, bd, blm} in nonequilibrium dynamics.
For Kawasaki dynamics with Kac potential, the law of large numbers for the empirical density has been proved
on the torus with periodic boundary conditions in \cite{gl},
on the whole lattice in \cite{mm}, and finally on a one-dimensional bounded interval (boundary driven) in \cite{mo3}.
The hydrodynamic equation obtained for the boundary driven dynamics is the following nonlocal, nonlinear partial differential equation 
with Dirichlet conditions at the boundary $\Gamma$ of the domain,
\begin{equation}\label{eq:1}
\begin{cases}
{\displaystyle
\partial_t \rho_t\, =\, 
\nabla \cdot \Big\{ \nabla \rho_t - \beta \sigma (\rho_t) \nabla (\jn  \star \rho_t)   \Big\}
=\, 
-\nabla \cdot \Big\{ {\dot {\mb J}^\b}(\rho_t)  \Big\}
}\\
{\displaystyle
\vphantom{\Big\{}
\rho_t {\big\vert_\Gamma} =\; b (\cdot)  \quad 
\text {for } \ \ 0\le t\le T \;,
} 
\\
{\displaystyle
\rho_0(u) =\g (u) \; ,
}
\end{cases}
\end{equation}
where $*$ stands for the spatial convolution and $\sigma(\rho)=2\rho(1-\rho)$ is the mobility of the system.
In the above formula ${\dot {\mb J}^\b}(\rho_t)$ is the instantaneous current
at time $t$ associated to the trajectory $\rho$:
\begin{equation}\label{J0}
{\dot {\mb J}^\b}(\rho_t) \, =\, -\nabla \rho_t + \beta \sigma (\rho_t) \nabla (\jn  \star \rho_t)\, .
\end{equation}
We shall denote by $\bar \rho$ the unique stationary solution of the hydrodynamic equation, i.e. $\bar \rho$
is the typical density profile for the stationary nonequilibrium state.

It follows from the hydrodynamic limit that
the empirical current $\mb W_t^N$ converges weakly to the time integral of ${\dot {\mb J}^\b}(\rho_s)$ in the time interval 
$[0,t]$ (cf. Proposition \ref{curr}). 
In addition to this we prove that when $\beta$ is small enough, then the empirical particle density $\pi_t^N$ 
obeys a law of large numbers with respect to the stationary measures (hydrostatic), i.e. 
it converges weakly under the unique stationary measure 
of the evolution process to the stationary solution $\bar  \rho$, (see Proposition \ref{th-hy1}). 
This is obtained deriving first the hydrodynamic limit for the empirical density
distributed according to the stationary measure. Then we exploit that the
stationary solution $\bar \rho$ is unique and is a global attractor for the macroscopic evolution. 
Similar strategy for proving the hydrostatic is used in \cite{flm, mo3}.
It then results that, if initially the particles are distributed according to the stationary state $\mu_N^{stat, b(\cdot)}$, 
then for each $t>0$, the mean empirical current $\mb W_t^N/t$ converges weakly to 
${\dot {\mb J}^\b}(\bar \rho)$ as $N\uparrow \infty$ (see Proposition \ref{curr1}). 

\smallskip
Further, we investigate the large deviations for the couple (current, density)= $(\mb W_t^N, \pi_t^N)$, that is
we compute the asymptotic probability of observing an atypical macroscopic trajectory of the (current, density)=
$(\mb W_t,\rho_t)$, when the number of particles tends to infinity.
The result can be informally stated as follows. Given a trajectory $(\mb W_t,\rho_t)_{t\in [0,T]}$ on a fixed 
interval of time $[0,T]$, we have
$$
\mathbb P_{N}^\b \Big( \big(\mb W^N,\pi^N\big) \approx \big(\mb W,\rho\big)\Big)\, \sim\, 
\exp\Big\{ -N^d \mc J_T \big(\mb W,\rho\big)\Big\}\, ,
$$
where $\mathbb P_{N}^\b$ is the law of microscopic dynamics,
$\sim$ denotes the logarithmic equivalence as $N\uparrow \infty$ and $(\mb W^N,\pi^N) \approx (\mb W,\rho)$ 
means that the trajectory $(\mb W^N,\pi^N)$ is in some neighborhood of $(\mb W,\rho)$ for an appropriate
topology.
The rate functional $\mc J_T$ is infinite in the set $\mb E^c$ of all paths $(\mb W,\rho)$ 
that do not satisfy the continuity equation $\partial_t \rho +\nabla \cdot {\dot {\mb W}_t}=0$, and for which some suitable energy estimate does not holds (cf. \eqref{1:Q}). 
Outside this set, 
\begin{equation*}
\label{Ica}
\mc J_T({\mb W},\rho)\;=\; \frac 12 \int_0^T \!dt \,
\Big\langle \big[ {\dot {\mb W}_t} - {\dot {\mb J}^\b}(\rho_t) \big], 
\frac{1}{\sigma(\rho_t)}
\big[ {\dot {\mb W}_t}  - {\dot {\mb J}^\b}(\rho_t) \big] \Big\rangle\, ,
\end{equation*}
where ${\dot {\mb W}_t}$ is the instantaneous current
at time $t$, $\langle \cdot,\cdot\rangle$ denotes
integration with respect to the space variables and ${\dot {\mb J}^\b}(\cdot)$ is defined in \eqref{J0}. 

Our proof relies on the method developed to study hydrodynamic large deviations for the density in \cite{kov, qrv, blm}
and for the current \cite{bdgjl2}.
The basic strategy of the proof of the lower bound consists of two steps, we first obtain this bound for smooth paths, 
then we extend it for general trajectories by showing that, for any given trajectory
$(\mb W, \rho)$ with   finite rate functional   $ \mc J_T (\mb W,\rho)$  
one constructs a sequence of smooth paths 
$(\mb W^n,\rho^n)$ so that $ (\mb W^n,\rho^n )\to (\mb W,\rho)$ in a suitable topology and  
$\mc J_T (\mb W^n,\rho^n) \to  \mc J_T (\mb W,\rho)$. 
The proof in \cite{bdgjl2} relies on the convexity of the rate functional.
In the present case, because of the lack of convexity  we modify the definition of 
the rate functional  declaring it infinite in the set $\mb E^c$.
The modified rate functional $\mc J_T$ makes the proof of the lower and upper bounds harder than the one in \cite{bdgjl2}.  

The last result of this paper is the large deviations for the empirical density. In
one dimension, it has been done in \cite{mo3}. In our context, one can achieve the proof either
following the same scheme as in \cite{mo3}, or adapting the strategy of \cite{bdgjl2}, using the
contraction principle.

\smallskip
The paper is organized as  follows. In section \ref{sec2}, we introduce the model and state the main results.
In Section \ref{mlswd},  we introduce the perturbed model, we prove the law of large numbers for the current,  and we collect
some basic estimates needed along the paper. In Section \ref{secprf}, 
we state and prove some properties of the rate functionals. In sections \ref{secldub} and \ref{secldlb}, 
we derive the upper and lower bounds large deviations for the couple (current, density).  Finally the density large 
deviations are recovered using the contraction principle in section \ref{ldp-ed}.

\medskip
\section{Notation and Results}
\label{sec2}


Fix a positive integer $d\ge 2$.  Denote by $\Lambda$ the open set $(-1,1)
\times \bb T^{d-1}$ and by $\overline \L = [-1,1]
\times \bb T^{d-1}$ its closure, where $\bb T^{k}$ is the $k$-dimensional torus
$[0,1)^k$, and by $\Gamma=\partial\Lambda$ the boundary of $\Lambda$: $\Gamma = \{(u_1,
\dots , u_d)\in \overline \L  : u_1 = \pm 1\}$.

We introduce  a smooth, symmetric,   translational invariant probability kernel of range 1 on
$\Ss_d=\R \times \bb T^{d-1}$, that is, a function $J:\Ss_d\times\Ss_d \to [0,1]$ such that
    $J(u,v) =J(v,u) =  J(0, v-u)$ for all 
$u,v\in \Ss_d$, $J(0,\cdot)$ is continuously differentiable, $J(0,u)=0$, for all $u$ such that $|u_1|>1$, and $ \int J (u,v) dv =1$, for all 
$u\in \Ss_d$.
This is the so called the Kac interaction on $\Ss_d$.

The Neuman Kac interaction $J^{neum}$ 
is  a symmetric  probability kernel on $\L$ defined
by imposing a reflection rule:
when   $ (u, v)  \in \overline\Lambda \times \overline\Lambda $, 
$u$     interacts with  $v$  and  with the reflected points of  $v$
 where reflections are the ones  with    respect  to  the left and right  boundary of $\L$. That is
for all $u$ and $v$ in $\overline\Lambda$ 
 \begin{equation}
\label{tm1}
J^{neum}(u,v):= J(u,v) + J(u,v+ 2(1-v_1)\vecte_1) + J(u,v -2(1+v_1)\vecte_1)\, , 
\end{equation} 
where $v_1$ stands for the first cordinate of the vector $v=(v_1,\cdots,v_d)$ and 
$\{\vecte_1,\ldots,\vecte_d\}$ 
stands for the canonical basis of $\R^d$.

For an integer $N\ge 1$, denote by $\bb T_N^{d-1}=\{0,\dots,
N-1\}^{d-1}$, the discrete $(d-1)$-dimensional torus of length $N$.
Let $\L_N=\{-N,\ldots,N\} \times \bb T_N^{d-1}$ be the
cylinder in $\bb Z^d$ of length $2N+1$ and basis $\bb T_N^{d-1}$ and let
$\G_N=\{(x_1, \dots, x_{d}) \in \bb Z\times \bb T_N^{d-1}\,|\, x_1 =\pm
N\}$ be the boundary of $\L_N$.  The elements of $\L_N$
are denoted by letters $x,y$ and the elements of $\overline\L$ by the
letters $u, v$.

The configuration space
is $\Sigma_N:=\{0,1\}^{\Lambda_N}$; elements of $\Sigma_N$ are denoted
by $\eta$ so that $\eta(x)=1$, (resp.\ $0$)  if site $x$ is occupied,
(resp.\ empty) for  the configuration $\eta$. 


Fix a positive parameter $\beta\ge 0$, and a positive function $b: \Gamma \to \bb R_+$. Assume that there
exists a neighbourhood $V$ of $\overline\L$ and a smooth function $\theta : V
\to (0,1)$ in $\mc C^2(V)$ such that $\theta$ is bounded below by a
strictly positive constant, bounded above by a constant smaller than
$1$ and such that the restriction of $\theta$ to $\Gamma$ is equal to $b$. 
The boundary driven Kawasaki process with Neuman Kac interaction is the Markov process on $\Sigma_N$ whose generator 
$\mf L_{N}\, :=\, \mf L_{\b,b,N}$ 
can be decomposed as
\begin{equation}\label{eq:gen}
\mf L_N \, :=\, N^2\cl_{\beta,N} \;+N^2\; L_{b,N} \, .
\end{equation}

The generator $\cl_{\b,N}$  describes the bulk dynamics which preserves the total number of particles.
The    pair interaction  between  $x$ and $y$ in $\L_N $  is  given by 
$$J_N(x,y)=   N^{-d}J^{neum} (\frac x N , \frac y N ). $$
The total interaction energy among   particles is defined by the following   Hamiltonian 
\begin{equation}
\label{Ha1}
H_N(\eta)= - \sum_{x,y\in\Lambda_N}  J_N(x,y) \eta (x)\eta(y)\, .
\end{equation}
The action of $\cl_{\b,N}$ on functions $f: \Sigma_N\to \R$ is then given by
$$
\left(\cl_{\b,N}  f\right) (\eta)= 
 \sum_{i=1}^d \sum_{x,x+\vecte_i\in\L_N} C_N^\b ({x,x+\vecte_i};\eta)  \left[ f(\eta^{x,x+\vecte_i}) -
f(\eta) \right]\, ,
$$
whith the  rate of exchange occupancies   $C_N^\b $  given by 
\begin{equation}
\label{rate1}
C_N^\b(x,y;\eta)= 
\exp \left  \{ -\frac {\beta}2  [H_N (\eta^{x,y}) -H_N (\eta)]  \right\}\; ,
\end{equation}
where 
$\eta^{x,y}$ is the configuration obtained from $\eta\in \Sigma_N$,  by
exchanging the occupation variables $\eta (x)$ and $\eta (y)$, i.e.\ 
\begin{equation*}
(\eta^{x,y}) (z) := 
  \begin{cases}
        \eta (y) & \textrm{ if \ } z=x\, ,\\
        \eta (x) & \textrm{ if \ } z=y\, ,\\
        \eta (z) & \textrm{ if \ } z\neq x,y\, .
  \end{cases}
\end{equation*}

The generator $L_{b,N}$ models the particle reservoir at the boundary  of $\L_N$,  it is defined by
the infinitesimal generator of a birth and death process acting on $\Gamma_N$ as
\begin{eqnarray*}
(L_{b,N} f)(\eta) \;=\;  \: \sum_{x \in \Gamma_N}
r_x\big(b(x/N),\eta \big) \big[ f(\sigma^{x} \eta)-f(\eta)\big]  \, ,
\end{eqnarray*}
where $\sigma^{x}\eta$ is the configuration obtained from
$\eta$ by flipping the configuration at $x$, i.e.\
\begin{equation*}
(\sigma^{x} \eta) (z) := 
  \begin{cases}
        1-\eta (x) & \textrm{ if \ } z=x\\
        \eta (z) & \textrm{ if \ } z\neq x\, ,
  \end{cases}
\end{equation*}
and for $x\in \Gamma_N$ and $\lambda\in (0,1)$ the rate
$r_x\big(\lambda,\eta)$ is given by
\begin{equation}\label{rate-b}
  r_x\big(\lambda,\eta)\;:=\;
\lambda (1-\eta(x)) + (1-\lambda) \eta(x)\, . 
\end{equation}

For any $\b\ge 0$, the operator $\cl_{\b,N}$ is  self-adjoint w.r.t. the 
Gibbs measures $\mu^{\b,\l}_{N}$ associated to the Hamiltonian \eqref {Ha1}
and chemical potentials $\l\in \R$:
\begin{equation*}
\mu^{\b,\l}_{N}(\eta)
= \frac 1{Z^{\b, \l}_{N}} \exp\{- \beta H_N(\eta) +\l \sum_{x\in \L_N}\eta(x) \}\;, \qquad \eta \in  \Sigma_N\; ,
\end{equation*}
where ${Z^{\b,\l}_{N}}$ is the normalization constant.
 This means that the   rates of the bulk dynamics   
$ \{ C_N^\b(x,y;\eta), \quad x,\,  y  \in \L_N\} $,  
satisfies the detailed balance conditions:
$$ C_N^\b(x,y;\eta)=   e^{- \beta   [H_N (\eta^{x,y}) -H_N (\eta)] } C_N^\b(y,x;\eta^{x,y}). $$

For a smooth function $ \rho: \L \to    (0,1)$ and $x\in\L_N$,
let $\nu_{\rho(\cdot)}^N$ be the Bernoulli
product measure on $\Sigma_N$ with marginals given by
\begin{equation*}
\nu_{\r(\cdot)}^N(\n(x)=1) = \r(x/N)\;.
\end{equation*}
Let $\varphi(\rho(x/N)) := \log [ \rho(x/N)/ (1-\rho(x/N))]$ be
the chemical potential of the profile $\rho(\cdot)$ at site $x/N$. 
It is easy to see that, 
$ \nu_{\rho(\cdot)}^{N}$  can be rewritten as
\begin{equation*}
\label{imr}
  \nu_{\rho(\cdot)}^{N}(\eta) = \prod_{x\in \L_N} 
\frac{ e^{{\varphi}(\rho(x/N)) \, \eta(x)}}{1+e^{{\varphi}(\rho(X/N))}}\;,
\end{equation*}
and if $\rho(u)=b(u)$ for all $u\in\Gamma$, then  $\nu_{{\r}(\cdot)}^N$
is reversible for the process with generator $L_{b,N}$.  

Notice that in view of the
diffusive scaling limit, the generator has been speeded up by $N^2$.
We denote by $(\eta_t)$ the Markov process on $\Sigma_N$ with generator
$\mf L_{N}$.
Since the Markov process $(\eta_t)$ is irreducible, for each $N\ge 1$,
$\b \ge 0$, there exists a unique invariant
measure $\mu^{stat}_N=\mu^{stat}_N(\b,b(\cdot))$ in which we drop the dependence on $\beta$ and $b(\cdot)$ 
from the notation. 
Moreover, if $b(\cdot)$ is not constant then
the invariant measure $\mu^{stat}_N$ cannot be written in simple form.

For an integer $1\leq m\leq +\infty$ denote by  $\mc C^m (\Lambda)$ the space of
$m$-continuously differentiable real functions defined on $\overline \Lambda$.
Let $\mc C^m_0 (\Lambda)$ (resp. $\mc C^m_c (\Lambda)$), $1\leq m\leq
+\infty$, be the subset of functions in $\mc C^m (\Lambda)$ which
vanish at the boundary of $\Lambda$ (resp. with compact support in
$\Lambda$). 
We denote by $\mc M=\mc M (\L)$ the space of finite signed measures on $\L$, 
endowed with the weak topology. For a finite signed measure $m$ and a continuous function $F\in \mc C^0(\L)$,
we let $\< m, F\>$ be the integral of $F$ with respect to
$m$. 

For each configuration $\eta$, denote by $\pi^N = \pi^N(\eta)\in \mc M$ the
positive measure obtained by assigning mass $N^{-d}$ to each particle
of $\eta$~:
\begin{equation*} 
\pi^N \, =\, N^{-d}\sum_{x\in\L_N}\eta(x)\, \delta_{x/N}\; ,
\end{equation*}
where $\delta_{u}$ is the Dirac measure concentrated on $u$. 
Notice that for each $\eta\in \Sigma_N$, the total mass of the positive measure  
$\pi^N(\eta)$ is bounded by 3.

For $t\ge 0$ and two neighboring sites $x,y\in \L_N$,  denote by ${\mb N}^{x,y}_t$ the total number of particles 
that jumped from $x$ to $y$ in the macroscopic time interval $[0,t]$. For $1\le j\le d $ and 
$x,x+{\vecte_j} \in \L_N$, we denote by 
$W_t^{x,x+\vecte_j}= {\mb N}^{x,x+\vecte_j}_t - {\mb N}^{x+\vecte_j,x}_t$ the current through the edge
$(x,x+\vecte_j)$. We now define the current entering and leaving the system through the border points.
For $x\in \Gamma_N$,
let ${\mb N}^{x,+}$ (resp. ${\mb N}^{x,-}$) be the number of particles created (resp. killed) at $x$ due to the reservoir
in the macroscopic time interval $[0,t]$, the current through $x\in\Gamma_N$ is then defined by 
$W_t^{x} = {\mb N}^{x,-}-{\mb N}^{x,+} $.

For $t\ge 0$, we define the \emph{empirical current} 
$\mb W^N_t =(W^{N}_{1,t}, \dots, W^{N}_{d,t}) \in \mc M^d = \{\mc M(\L)\}^d$ as the vector-valued finite signed measure on $\L$
induced by the net flow of particles in the time interval $[0,t]$:
\begin{equation}
\label{cur-emp}
\left\{ 
\begin{array}{l}
\displaystyle W^{N}_{1,t} \;=\; \frac1{N^{d+1}}\sum_{x,x+\vecte_1\in\L_N} 
W^{x,x+\vecte_1}_t \delta_{x/N} \, +\, 
\frac1{N^{d+1}} \sum_{x\in\Gamma_N}W^{x}_t \delta_{x/N}\, ,\\
\displaystyle W^{N}_{k,t} \;=\; \frac1{N^{d+1}} \sum_{x\in\L_N} 
W^{x,x+\vecte_k}_t \delta_{x/N}\; \quad \text{for}\qquad  k=2,\dots,d\, . \\ 
\end{array}
\right. 
\end{equation}
For a continuous vector field $\mb G = (G_1, \dots, G_d) \in (\mc C^0(\L))^d$ the integral of $\mb G$ with respect to $\mb W^N_t$, also
denoted by $\<\mb W^N_t, \mb G\> $, is given by
\begin{equation}
\label{empcurr}
\< \mb W^N_t , \mb G\> =\, \sum_{k=1}^d \<W^N_{k,t}\, ,\, G_k \>\;,
\end{equation}
where
$$
\<W^N_{1,t}\, ,\, G_1 \>\, =\, N^{-(d+1)} \Big\{ \sum_{x,x+\vecte_1\in\L_N} 
G_1 (x/N) \, W^{x,x+e_1}_t 
\, +\, \sum_{x\in\G_N} G_1 (x/N) \, W^{x}_t \Big\}
$$
and for $2\le k\le d$, 
$$
\<W^N_{k,t}\, ,\, G_k \>\, =\,
N^{-(d+1)} \sum_{x\in\L_N} G_k (x/N) \, W^{x,x+e_k}_t\, .
$$

The purpose of this article is to prove hydrodynamic limit and large deviations
for the empirical current and for the density of particles. 
Fix $T>0$. Let $\mc{F}^1$ be the subset of $\mathcal{M}$ of all
absolutely continuous positive measures with respect to the Lebesgue measure
with positive density bounded by $1$:
\begin{equation*}
\mc{F}^1=\big\{\pi\in\mc{M}:\pi(du)=\rho(u)du \;\; \hbox{ and } \;\;
0\leq \rho(u)\leq 1\;  \hbox{ a.e.} \big\}\, .
\end{equation*}
For a metric space $E$ ($E={\mc M},\mc{F}^1 ,{\mc M}^d,\Sigma_N,\cdots$),
let $D([0,T],E)$ be the set of right continuous with left
limits trajectories with values in $E$,  endowed with the Skorohod
topology and   equipped with its Borel $\s-$ algebra.     
For a probability measure  $\mu_N$ on $\Sigma_N$ denote by   $(\eta_t)_{t\in [0,T]}$
the Markov process   with generator 
$\mf L_{N} $ starting,  at time $t=0$, by $ \eta_0$ distributed according to $\mu_N$.  
Denote by $\Pb_{\mu_N}^{\b}:=\Pb_{\mu_N}^{\b,N}$ the 
probability measure on the path space $D([0,T],\Sigma_N)$ corresponding to 
the Markov process $(\eta_t)_{t\in [0,T]}$
and by 
$\Es_{\mu_N}^{\b}$ the expectation with respect to 
$\Pb_{\mu_N}^{\b}$.
 When $\mu_N=\delta_{\eta^N}$ for some configuration $\eta^N\in\Sigma_N$, we write simply
$\Pb_{\eta^N}^{\b}=\Pb_{\delta_{\eta^N}}^{\b,N}$ and $\Es_{\eta^N}^{\b}=\Es_{\delta_{\eta^N}}^{\b}$. We denote by   $\pi^N$   the map from $D([0,T],\Sigma_N)$  to $D([0,T],\mc M)$  defined by $ \pi^N (\eta_{\cdot})_t=  \pi^N (\eta_t)$ and by  
  $Q_{\mu_N}^{\b}  =  \Pb^{\beta}_{\mu_N} \circ (\pi^N)^{-1} $ the law of the process $\big( \pi^N (\eta_t) \big)_{t\in [0,T]}$. 
 
\subsection {Hydrodynamics and hydrostatics}\label{2.1}
The hydrodynamic and hydrostatic limits for the empirical measures $\pi^N$ has been proved in one dimension in
\cite{mo3}.
The analysis in all dimension can be deducted from the same strategy. We shall therefore
summarize the results omitting their proofs.

For  integers $n$ and $m$ we denote by $C^{n,m}([0,T]\times \L)$ the space of functions 
$F= F_t(u): [0,T] \times \overline \L \to \R$  with $n$ derivatives in time and $m$ derivatives in space which are continuous up to 
the boundary.  
We denote by  $C^{n,m}_0([0,T]\times\L)$  the subset of $C^{n,m}([0,T]\times\L)$ of 
functions vanishing  at the boundary of $\L$,  i.e. $F_t{\big\vert_\Gamma}\equiv 0$ for all $t \in [0,T]$.
We finaly denote by  $C^{n,m}_c([0,T]\times\L)$  the subset of  $C^{n,m}([0,T] \times  \L)$  of 
functions   with compact support in $ [0,T]\times \L$.  

Let $L^2(\L)$ be the Hilbert space of functions $F:\L
\to \R$ such that $\displaystyle \int_\L | F(u) |^2 du <\infty$ equipped with
the inner product
\begin{equation*}
\<F,G\> =\int_\L F(u) \,  G (u) \, du\; .
\end{equation*}
The norm of $L^2(\L)$ is denoted by $\| \cdot \|_{L_2(\L)}$. 

Let $H^1(\L)$ be the Sobolev space of functions $F$ with
generalized derivatives $\nabla F=\big(\partial_1 F,\cdots,\partial_d F \big)$	
in $L^2(\L)$. $H^1(\L)$ endowed with the scalar product
$\<\cdot, \cdot\>_{H^1}$, defined by
\begin{equation*}
\<F,G\>_{H^1} = \< F, G \> + 
\<\nabla F \, , \, \nabla G \>\;,
\end{equation*}
is a Hilbert space. The corresponding norm is denoted by
$\|\cdot\|_{H^1}$.
Denote by  $H^{1}_0(\Lambda)$ the closure of $C^\infty_c(\L)$ 
in $H^1(\L)$.  

Denote by ${\bf Tr}:H^1(\L) \to L^2(\Gamma)$ the continuous linear
operator called trace operator, defined as the unique extension of the linear operator from $\mc C^0({\L})$ to $L^2(\Gamma)$
which associates to any $F\in H^1(\L)\cap \mc C^0({\L})$ its boundary value: $\text{\bf Tr} (G )=
G\big|_{\Gamma}$ (\cite{z}, Theorem 21.A.(e)).
Recall that the space $H^1_0(\L)$ is the space of functions $F$ in
$H^1(\L)$ with zero trace (\cite{z}, Appendix (48b)):
\begin{equation*}
H^1_0(\L) = \left\{F\in H^1(\L):\; \text{\bf Tr}(F) = 0\right\}\,.
\end{equation*}

To state the hydrodynamic equation, we need some more notation.
For a Banach space $(\bb
B,\Vert\cdot\Vert_{\bb B})$ we denote by $L^2([0,T],\bb B)$
the Banach space of measurable functions $U:[0,T]\to\bb B$ for which
\begin{equation*}
\Vert U\Vert^2_{L^2([0,T],\bb B)} \;=\; 
\int_0^T\Vert U_t\Vert_{\bb B}^2\, dt \;<\; \infty
\end{equation*}
holds. 
For  $m\in L_\infty(\L)$ and $u\in\L$, we set 
$$
(\jn\star m)(u)=\int_\L \jn (u,v)m(v) dv\, ,
$$
and   $\chi(m) = m(1-m)$, $\sigma(m)=2\chi(m)$. For any smooth function $F$, let
$\Delta F$ be the laplacian with respect to the space variable of a function $F$. 
For   $F\in C^{1,2}_0([0,T]\times \L)$, $\rho  \in D([0,T], \mc F^1)$ denote
 \begin{equation}
\label{lb1}
\begin{split}
 &\ell_F^\b(\rho|\rho_0) := \big\langle \rho_T, F_T \big\rangle 
- \langle {\rho_0}, F_0 \rangle
- \int_0^{T} \!dt\, \big\langle \rho_t, \partial_t F_t \big\rangle\\
&\qquad\qquad\quad
\, - \int_0^{T} \!dt\, \big\langle  \rho_t , \Delta F_t \big\rangle
\;+\; \int_0^T dt
\int_{\Gamma}b(r) \,  \text{\bf n}_1(r)\, (\partial_{1}F_t)(r) \, dS(r)  \\
&\qquad\qquad\quad
-  \b\int_0^T \langle \sigma( \rho_t), (\nabla F_t)\cdot \nabla (\jn\star  \rho_t)\rangle  dt\, ,
\end{split}
\end{equation}
where {\bf n}=$(\text{\bf n}_1,\ldots ,\text{\bf n}_d)$ stands for the
outward unit normal vector to the boundary surface $\Gamma$ and
$\text{d} \text{S}$ for an element of surface on $\Gamma$. For $u,v\in \R^d$, 
$u\cdot v$ is the usual scalar product of $u$ and $v$ in $\R^d$, we denote by $|\cdot|$ the associated norm:
$|u|=\sqrt{\sum_{i=1}^d|u_i|^2}$.

 Denote by  $\mc A _{[0, T]} \subset D\big([0,T]; \mc F^1\big)  $ the set of all weak  solutions of the  
 boundary value problem \eqref {eq:1} without fixed initial condition:
\begin{equation*}
\mc A _{[0, T]} = \Big\{  \r \in  L^2\big([0,T],H^1(\Lambda)\big) \; :\; 
\quad \forall  F \in \mc C^{1,2}_0 ( [0,T]
 \times \L) \, ,\; \ell_F^\b(\rho| \rho_0)=0
   \Big\}\, .
\end{equation*}

\begin{proposition}
\label{th-hy}
For any sequence of initial probability measures $(\mu_N)_{N\ge 1}$, the sequence of probability measures $(Q_{\mu_N}^{\b})_{N\geq 1}$ 
is weakly relatively compact and  all its converging
subsequences converge to   some limit  $Q^{\beta,*}$ that is concentrated on absolutely continuous paths  whose densities
$\r\in C ([0,T],\mc F^1(\L))$ are   in $\mc A_{[0, T]}  $. Moreover, if for any $\d>0$ and for  any  function 
$F\in \mc C^0(\L)$
\begin{equation}\label{pfl}
\lim_{N\to \infty}\mu^N\Big\{ \Big| \langle  \pi_N, F\rangle\, -\, \int_\L \g (u)F(u)du   \Big| \ge \delta \Big\}=0\, ,
\end{equation}
for an initial continuous profile $\g:\L\to [0,1]$,  then
the sequence of probability measures $(Q_{\mu_N}^{\b})_{N\geq 1}$ 
converges to the Dirac measure  concentrated on the unique   weak  solution $\rho(\cdot,\cdot)$ of boundary value 
problem \eqref{eq:1}. Accordingly, for any $t\in [0,T]$, 
any $\d>0$ and any function $F\in \mc C^0(\L)$
$$
\lim_{N\to \infty}\Pb_{\mu_N}^{\b}\Big\{ \Big| \langle  \pi_N(\eta_t), F\rangle \, -\, \int_\L \rho(t,u)F(u)du  
 \Big| \ge \delta \Big\}=0\, .
$$
\end{proposition}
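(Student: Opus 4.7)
The plan is to follow the entropy method of Guo--Papanicolaou--Varadhan, adapted to the boundary-driven Kawasaki dynamics with nonlocal interaction along the lines of \cite{mo3}. The proof has four steps: tightness of $(Q_{\mu_N}^\b)$, identification of limit points via Dynkin's formula and replacement lemmata, an energy estimate placing the limit in $L^2([0,T],H^1(\L))$, and uniqueness of weak solutions to \eqref{eq:1}.

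Tightness is standard: since $\pi_t^N$ takes values in the compact set $\mc F^1$, it suffices to apply Aldous' criterion to the real-valued processes $\<\pi_t^N,F\>$ for $F\in\mc C^2_c(\L)$. The Dynkin martingale
\[
\mb M_t^{N,F} \;=\; \<\pi_t^N,F\> - \<\pi_0^N,F\> - \int_0^t \mf L_N\<\pi_s^N,F\>\, ds
\]
has quadratic variation of order $N^{-d}$ thanks to the diffusive normalization $N^2$ and the boundedness of $\eta$, while the drift has a uniformly controlled modulus of continuity.

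To identify limit points, I would take $F\in\mc C^{1,2}_0([0,T]\times\L)$ and compute $\mf L_N\<\pi^N,F\>$. Because a nearest-neighbour swap changes the Hamiltonian only by $O(N^{-1})$, the rate admits the expansion
\[
C_N^\b(x,x+\vecte_i;\eta) \;=\; 1 + \frac{\b}{N}\,[\eta(x)-\eta(x+\vecte_i)]\,(\nabla_i\jn\star\pi^N)(x/N) + O(N^{-2})\, ,
\]
and two summations by parts applied to $N^2\cl_{\b,N}\<\pi^N,F\>$ produce the discrete Laplacian (yielding $\<\r_t,\D F_t\>$ in the limit) together with a nonlocal drift. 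A one-block estimate against the reference Bernoulli measure $\nu_\t^N$ (with $\t$ the smooth extension of $b$) replaces local averages of $\eta$ by $\r_s$ inside the mobility, producing the term $-\b\<\s(\r_t),\nabla F_t\cdot\nabla(\jn\star\r_t)\>$; because $\jn$ is macroscopically smooth, this bulk replacement is actually easier than for short-range dynamics. The uncancelled boundary pieces from the summations by parts then combine with $N^2 L_{b,N}\<\pi^N,F\>$ after a boundary replacement lemma, which uses reversibility of $L_{b,N}$ for the product measure with profile $b$ on $\Gamma_N$ to substitute $\eta(x)$ by $b(x/N)$ at boundary sites, and thus reconstructs the surface integral $\int_0^T\!dt\int_\Gamma b\,\text{\bf n}_1\,\partial_1 F_t\,dS$ appearing in $\ell_F^\b$. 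Every limit point is therefore concentrated on trajectories satisfying $\ell_F^\b(\r|\r_0)=0$ for all such $F$.

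The energy estimate placing $\r$ in $L^2([0,T],H^1(\L))$ is obtained by a Girsanov-type argument against a family of bounded perturbations of the dynamics, of the type constructed in Section \ref{mlswd}. Uniqueness of weak solutions in $\mc A_{[0,T]}$ with given continuous initial profile $\g$ then follows by a Gronwall argument on $\|\r_t^1-\r_t^2\|_{L^2(\L)}^2$, exploiting the smoothness of $\jn$ and the Lipschitz character of $\s$; combined with hypothesis \eqref{pfl} and the tightness, the full sequence $(Q_{\mu_N}^\b)$ converges to the Dirac mass on this unique weak solution, which yields the pointwise-in-$t$ statement. The main technical obstacle is the boundary replacement producing the Dirichlet trace term, whose proof demands an $L^2$-boundary estimate adapted from \cite{blm, mo3}; the bulk one-block estimate is by contrast simpler here because $\jn\star\pi^N$ is already a macroscopic functional of $\eta$.
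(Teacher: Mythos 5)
Your proposal follows the standard Guo--Papanicolaou--Varadhan scheme (tightness, identification of limit points via Dynkin's formula plus bulk and boundary replacement lemmata, an energy estimate, and a Gronwall uniqueness argument), which is exactly the route taken in \cite{mo3} to which the paper defers for this proof; the rate expansion you write matches Lemma~\ref{b1} with $\mathbf V=0$, and your observations about the quadratic variation and the macroscopic smoothness of $\jn\star\pi^N$ are correct. This is essentially the same approach as the paper's.
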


The proof of this Proposition is similar to the one of Theorem 2.1. in \cite{mo3}.
Recall that the stationary measure $\mu_N^{stat}$ depends on $\b$ and $b(\cdot)$. The
asymptotic behavior of the empirical measure under the stationary state $\mu_N^{stat}$ can be stated as follows.

\begin{proposition}
\label{th-hy1} There exists $\b_0$ depending on $ \Lambda$ and $\jn$ so that, for any  $\beta < \b_0$,
for any   $F \in  C^{0}(\L)$,  for any $\delta>0$,
 $$  \lim_{N \to \infty} \mu_N^{stat} \Big [ \Big|  \langle  \pi_N(\eta), F\rangle \, -\, 
  \int_\L \bar \rho(u)F(u)du   \Big| \ge \delta\Big] =0 \, ,$$
 where $\bar \rho$ is  the unique   weak  solution of the following
 boundary value problem 
\begin{equation} 
\label{eq:1s}
\begin{cases}
{\displaystyle
 \Delta \rho(u)-   \b \nabla \cdot \Big\{ \sigma( \rho(u)) \nabla (\jn  \star \rho )(u)   \Big\}
= 0, \quad u \in \L,} \\
{\displaystyle
\vphantom{\Big\{}
\rho (\cdot){\big\vert_\Gamma} =\; b (\cdot) \;.
} 
 \end{cases}
\end{equation}

\end{proposition}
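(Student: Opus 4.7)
The plan follows the two-step strategy announced in the introduction: first invoke the dynamical hydrodynamic limit starting from the stationary measure, then exploit the fact that for small $\beta$ the profile $\bar\rho$ is the unique stationary solution of \eqref{eq:1} and is a global attractor.

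First, since the total mass of $\pi^N(\eta)$ is bounded by $3$ uniformly in $\eta$ and $N$, the sequence $(\mu_N^{stat})$ is tight when viewed as probability measures on $\mc M$, with any limit point $\mu^*$ concentrated on $\mc F^1$. I would apply Proposition \ref{th-hy} to the initial measures $\mu_N := \mu_N^{stat}$: along a subsequence, $Q_{\mu_N^{stat}}^{\beta}$ converges to some $Q^{\beta,*}$ concentrated on $\mc A_{[0,T]}$. Crucially, the invariance of $\mu_N^{stat}$ under the dynamics makes each $Q_{\mu_N^{stat}}^{\beta}$ stationary with respect to time translations, and this property passes to the limit $Q^{\beta,*}$. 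Hence for every $t\in[0,T]$ the marginal of $Q^{\beta,*}$ at time $t$ equals $\mu^*$.

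Second, I would prove that for $\beta$ small enough the elliptic problem \eqref{eq:1s} admits a unique solution $\bar\rho \in H^1(\Lambda)$ (with boundary trace $b$), and that any weak solution $\rho_t$ of \eqref{eq:1} with initial datum in $\mc F^1$ satisfies $\rho_t \to \bar\rho$ in $L^2(\Lambda)$ as $t\to\infty$. Writing $w_t := \rho_t - \bar\rho$, which has zero trace on $\Gamma$, testing the equation for $w_t$ against $w_t$ itself and integrating by parts gives
\begin{equation*}
\frac{1}{2}\frac{d}{dt}\|w_t\|_{L^2(\Lambda)}^2 \;\le\; -\|\nabla w_t\|_{L^2(\Lambda)}^2 \;+\; \beta\, C\, \|\nabla w_t\|_{L^2(\Lambda)}\, \|w_t\|_{L^2(\Lambda)},
\end{equation*}
where $C$ depends only on $\|\nabla \jn\|_\infty$, $\sup|\sigma|$ and $\sup|\sigma'|$ (using $|\jn\star m|\le C$ and $|\nabla(\jn\star w_t)|\le C\|w_t\|_{L^2}$). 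Using Young's inequality to absorb the cross term and the Poincar\'e inequality (valid since $w_t|_\Gamma = 0$), one gets $\frac{d}{dt}\|w_t\|_{L^2}^2 \le -\kappa \|w_t\|_{L^2}^2$ for some $\kappa > 0$, provided $\beta < \beta_0$ with $\beta_0 = \beta_0(\Lambda, \jn)$ explicit. The very same computation with $\frac{d}{dt}\equiv 0$ yields uniqueness of $\bar\rho$ among $H^1$ stationary solutions.

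Combining the two steps: any trajectory $\rho$ in the support of $Q^{\beta,*}$, viewed on an interval $[0,T]$ with $T$ large, satisfies $\|\rho_T - \bar\rho\|_{L^2}\to 0$ as $T\to\infty$ by the attractor property (since $\rho_0 \in \mc F^1$). By the stationarity established in Step 1, the law of $\rho_T$ under $Q^{\beta,*}$ is $\mu^*$ for every $T$, hence $\mu^* = \delta_{\bar\rho}$. Since every subsequential limit is $\delta_{\bar\rho}$, the full sequence converges, which yields the announced convergence in probability for any continuous test function $F$. The main obstacle is the stability estimate in Step 2: producing an explicit threshold $\beta_0$ requires careful bookkeeping of the nonlocal drift $\beta\sigma(\rho)\nabla(\jn\star\rho)$ against the diffusive term through Poincar\'e, and the absence of any a priori Lyapunov (free-energy) functional for the non-reversible boundary-driven dynamics forces this direct energy approach on $w_t$.
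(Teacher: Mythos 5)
Your proposal follows exactly the strategy the paper announces in the introduction (and defers to Theorem~2.3 of \cite{mo3}): first derive the hydrodynamic limit starting from $\mu_N^{stat}$ via Proposition~\ref{th-hy} and use stationarity to make all time-marginals of the limit coincide, then establish that for $\beta$ small the stationary profile $\bar\rho$ is unique and a global $L^2$-attractor by an energy estimate on $w_t=\rho_t-\bar\rho$ combined with Poincar\'e. This matches the paper's approach, and the key stability estimate you sketch is the correct one.
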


The proof of this Proposition is similar to the one of Theorem 2.3. in \cite{mo3} and therefore
is omitted.

We turn now to the asymptotic behavior of the empirical current. Next result states that it  converges to the time integral of
the instantaneous current ${\dot {\mb J}^\b}(\rho_t)$ associated to the solution of the hydrodynamic equation \eqref{eq:1}:

\begin{proposition}
\label{curr}
Fix an initial profile $\g\in \mc F^1$ 
and consider a sequence of probability measures $\mu^N$ 
associated to $\g$ in the sense of \eqref{pfl}. 
Let $\rho$ be the solution of the equation
\eqref{eq:1}. Then, for each $T>0$, $\delta>0$ and $\mb G \in \big(C^1(\L)\big)^d$,
\begin{equation*}
\lim_{N\to\infty} \Pb_{\mu_N}^{\b} \Big[\, \big\langle {\mb W^N_T}, \mb G\big\rangle
\;-\; \int_0^T dt\, \big\langle \big\{-\nabla\rho_t +  \b  \sigma(\rho_t)\nabla (\jn  \star \rho_t ) \big\}
\, , \,\mb G\big\rangle \Big\vert > \delta \Big ] \;=\; 0\; .
\end{equation*}
\end{proposition}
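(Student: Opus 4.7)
The plan is to apply Dynkin's formula to the process $t \mapsto \langle \mb W^N_t, \mb G\rangle$, writing it as a martingale $M^N_t(\mb G)$ plus a predictable compensator $A^N_t(\mb G)$, show the martingale vanishes, and identify the limit of the compensator via Proposition \ref{th-hy} together with a standard one-block replacement for the local quadratic occupation. For each bulk bond $(x, x+\vecte_k) \subset \L_N$ the compensator of $W^{x,x+\vecte_k}_t$ is $\int_0^t N^2 C_N^\b(x, x+\vecte_k; \eta_s)[\eta_s(x) - \eta_s(x+\vecte_k)]\, ds$, and for each $x \in \Gamma_N$ the compensator of $W^x_t$ is $\int_0^t N^2 [\eta_s(x) - b(x/N)]\, ds$. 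The predictable quadratic variation $\langle M^N(\mb G)\rangle_T$ is bounded by the sum over bonds of $(G_k(x/N)/N^{d+1})^2 \cdot N^2 \cdot O(1) \cdot T = O(N^{-d})$ plus an $O(N^{-(d+1)})$ contribution from the boundary, so Doob's inequality gives $M^N_T(\mb G) \to 0$ in probability for $d\ge 2$.

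To treat $A^N_T(\mb G)$, I would Taylor expand $C_N^\b$ around $\b = 0$, using that $\frac12[H_N(\eta^{x,x+\vecte_k}) - H_N(\eta)] = [\eta(x) - \eta(x+\vecte_k)]\bigl[(\jn \star \pi^N_s)((x+\vecte_k)/N) - (\jn \star \pi^N_s)(x/N)\bigr]$ is $O(1/N)$ since $\jn$ is smooth on the macroscopic scale. This yields
\begin{equation*}
C_N^\b(x,x+\vecte_k;\eta)[\eta(x) - \eta(x+\vecte_k)] = [\eta(x) - \eta(x+\vecte_k)] - \b\,[\eta(x) - \eta(x+\vecte_k)]^2 \nabla_{k,N}(\jn \star \pi^N)(x/N) + O(N^{-2}),
\end{equation*}
where $\nabla_{k,N}$ denotes the discrete gradient rescaled by $N$. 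Inserting this into $A^N_T(\mb G)$ and performing a discrete summation by parts in $x$ converts the $N^2/N^{d+1}$ prefactor into the correct $N^{-d}$ Riemann-sum normalization for the linear piece, and leaves an integrand involving $[\eta(x)-\eta(x+\vecte_k)]^2 \partial_k(\jn\star\pi^N_s)(x/N)$ for the quadratic piece, together with two residual surface sums over $\{x_1 = \pm N\}$ coming from the summation by parts.

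For the linear piece, Proposition \ref{th-hy} gives $N^{-d}\sum_x \partial_k G_k(x/N)\int_0^T \eta_s(x)\,ds \to \int_0^T\!\!\int_\L \rho_s\, \nabla\!\cdot\!\mb G\, du\, ds$. For the quadratic piece, writing $[\eta(x) - \eta(x+\vecte_k)]^2 = \eta(x) + \eta(x+\vecte_k) - 2\eta(x)\eta(x+\vecte_k)$ and applying a one-block estimate (controlling the Dirichlet form using the entropy bound inherited from the Kawasaki bulk dynamics), one replaces the two-point product by $\rho_s^2$, so that the local square is replaced by $\sigma(\rho_s)$; since $\jn$ is smooth with macroscopic range, $\partial_k(\jn\star\pi^N_s) \to \partial_k(\jn\star\rho_s)$ uniformly from the weak convergence of $\pi^N_s$. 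The quadratic piece therefore converges to $\beta\int_0^T\langle \sigma(\rho_s)\nabla(\jn\star\rho_s),\mb G\rangle\, ds$. Finally, the surface residues from the summation by parts combine with the boundary compensator $\frac1{N^{d-1}}\sum_{x\in\Gamma_N}G_1(x/N)\int_0^T[\eta_s(x) - b(x/N)]\,ds$ and, using the Dirichlet trace $\rho_s\vert_\Gamma = b$, regroup into $\int_\Gamma b\,\mb G\cdot\mathbf n\,dS$, which is exactly the boundary term needed to pass from $\int_\L \rho_s\,\nabla\!\cdot\!\mb G\, du$ to $\langle -\nabla\rho_s,\mb G\rangle$ via integration by parts.

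The delicate step I expect to be the main obstacle is the boundary matching in the last paragraph: the surface remnants from the discrete summation by parts and the compensator of the birth/death currents on $\Gamma_N$ must cancel in just the right way to recover the Dirichlet trace, and this requires a boundary-layer argument showing that $\eta_s$ at sites adjacent to $\Gamma_N$ is well approximated by $b$ in the diffusive limit. The replacement of $\eta(x)\eta(x+\vecte_k)$ by $\rho_s^2$ in the bulk is routine in view of Proposition \ref{th-hy} and the Kawasaki Dirichlet form estimate, and the smoothness of $\jn$ makes the long-range factor $\nabla(\jn\star\pi^N)$ continuous in the weak topology, so that part is comparatively soft.
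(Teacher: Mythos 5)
Your proposal follows essentially the same route as the paper's proof, which derives Proposition \ref{curr} as the $\mb V=0$, $H=0$ case of Proposition \ref{hy-curr}: martingale decomposition of $\langle\mb W^N_t,\mb G\rangle$ with vanishing quadratic variation, Taylor expansion of the jump rates (Lemma \ref{b1}), discrete summation by parts, the replacement lemma of Proposition \ref{see} applied to the bulk cylinder function $[\eta(0)-\eta(\vecte_k)]^2$ and to the boundary occupation $\eta(x)-b(x/N)$, and identification of the limit by the hydrodynamic limit. One consistent sign slip: your formula for $\tfrac12[H_N(\eta^{x,x+\vecte_k})-H_N(\eta)]$ (and hence for the linearized $C_N^\beta[\eta(x)-\eta(x+\vecte_k)]$) has the forward difference $(\jn\star\pi^N)((x+\vecte_k)/N)-(\jn\star\pi^N)(x/N)$ with the wrong sign, producing $-\beta$ where it should be $+\beta[\eta(x)-\eta(x+\vecte_k)]^2 N^{-1}\partial_k^N(\jn\star\pi^N)(x/N)$, which is needed to recover the $+\beta\,\sigma(\rho)\nabla(\jn\star\rho)$ drift of \eqref{J0}.
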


Next result concerns the asymptotic behavior of the mean empirical
current $\mb W^N_T/ T$ under the sequence of stationary measures $\{ \mu_N^{stat}\; :\; N\ge 1 \}$.

\begin{proposition}
\label{curr1}
There exists $\b_0$ depending on $ \Lambda$ and $\jn$ so that, for any  $\beta < \b_0$,
for any  $T>0$, $\delta>0$ and $\mb G \in \big(C^1(\L)\big)^d$,
\begin{equation*}
\lim_{N\to\infty} \Pb_{\mu_N^{stat}}^{\b} \Big[\, 
\big\langle\frac1T{\mb W^N_T}, \mb G\big\rangle
\;-\; \, \big\langle\big\{ -\nabla  \bar\rho +  \b  \sigma(\bar \rho)\nabla (\jn  \star \bar \rho ) \big\}
\, , \,\mb G\big\rangle
\Big\vert > \delta \Big ] \;=\; 0\; ,
\end{equation*}
where $\bar \rho$ is  the unique   weak  solution of the boundary value problem \eqref{eq:1s}.
\end{proposition}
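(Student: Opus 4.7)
The plan is simply to chain together the hydrostatic limit (Proposition \ref{th-hy1}) with the law of large numbers for the empirical current from arbitrary initial data (Proposition \ref{curr}). The smallness condition $\b<\b_0$ enters only through the invocation of the first of these.

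First, for $\b<\b_0$, Proposition \ref{th-hy1} asserts that for every $F\in \mc C^0(\L)$ and every $\d>0$,
\begin{equation*}
\lim_{N\to\infty}\mu_N^{stat}\Big\{\,\big|\langle \pi^N,F\rangle-\langle\bar\rho,F\rangle\big|\ge \d\,\Big\}=0,
\end{equation*}
which is precisely condition \eqref{pfl} with $\g=\bar\rho$ for the sequence $\mu^N=\mu_N^{stat}$. One may therefore apply Proposition \ref{curr} to this sequence: the empirical current $\langle \mb W^N_T,\mb G\rangle$ converges in $\Pb^\b_{\mu_N^{stat}}$-probability to $\int_0^T dt\,\langle\dot{\mb J}^\b(\rho_t),\mb G\rangle$, where $(\rho_t)_{t\in[0,T]}$ is the unique weak solution of the Cauchy problem \eqref{eq:1} with initial datum $\bar\rho$.

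The remaining step is the identification $\rho_t\equiv\bar\rho$. Since $\bar\rho$ solves the stationary boundary value problem \eqref{eq:1s}, the constant-in-time trajectory $\rho_t\equiv\bar\rho$ satisfies $\partial_t\bar\rho\equiv 0$ while the divergence $\nabla\!\cdot\{\nabla\bar\rho-\b\sigma(\bar\rho)\nabla(\jn\star\bar\rho)\}$ vanishes by \eqref{eq:1s}, and its trace on $\Gamma$ is $b(\cdot)$; moreover, $\bar\rho\in H^1(\L)$ so that the constant trajectory lies in $L^2([0,T],H^1(\L))$. Hence it is an admissible weak solution of \eqref{eq:1} with initial condition $\bar\rho$, and the uniqueness statement contained in Proposition \ref{th-hy} forces $\rho_t=\bar\rho$ for every $t\in[0,T]$.

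Putting these pieces together,
\begin{equation*}
\int_0^T\!dt\,\langle\dot{\mb J}^\b(\rho_t),\mb G\rangle \;=\; T\,\big\langle -\nabla\bar\rho+\b\sigma(\bar\rho)\nabla(\jn\star\bar\rho),\mb G\big\rangle,
\end{equation*}
and dividing by $T$ yields the announced convergence. The only substantive ingredient is the uniqueness assertion underpinning $\rho_t\equiv\bar\rho$; all other steps are a routine concatenation of the two previously stated propositions, and the hypothesis $\b<\b_0$ is inherited solely from Proposition \ref{th-hy1}.
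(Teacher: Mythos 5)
Your proof is correct and is exactly the argument the paper intends: the text states that Proposition \ref{curr1} is an "immediate consequence" of Proposition \ref{th-hy1} together with Proposition \ref{curr}, and you supply precisely that chain, including the (necessary, if brief) justification that the constant-in-time path $\rho_t\equiv\bar\rho$ is the unique weak solution of \eqref{eq:1} started from $\bar\rho$.
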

 
The proof of Proposition \ref{curr} is given for more general processes in section  \ref{mlswd}.
We obtain then Proposition \ref{curr1} as an immediate  consequence from Proposition \ref{th-hy1}. 
 
\smallskip
\subsection{Large deviations}
Fix a positive time $T>0$  and an initial profile $\g\in \mc F^1$.
We are interested both on large deviations of the couple $(\mb W_t^N , \pi^N(\eta_t))_{t\in [0,T]}$ and  
on large deviations of the empirical measure $(\pi^N(\eta_t))_{t\in [0,T]}$ during the interval time $[0,T]$ 
and starting from the profile $\g$.

Let $\mf A_\g$ be the set of trajectories
$(\mb W, \pi)$ in $D([0,T], \mc M^{d+1})$ such that
for any $t\in[0,T]$ and any $G \in \mc C_0^1(\L)$ 
\begin{equation}
\label{eq:4.5}
\langle \pi_t , G\rangle - \langle \gamma , G\rangle = 
\langle \mb W_t , \nabla G\rangle  \, .
\end{equation}

Define the energy functional 
${\mc E}^\gamma={\mc E}^{\gamma, T,\b} :D([0,T],{\mc M}^{d+1})\to[0,\infty]$ by
\begin{equation}
\label{1:Q}
{\mc E}^{\gamma}(\mb W,\pi)=
\begin{cases}
 {\mc Q} (\pi) &  \hbox{  if  } \ \ (\mb W,\pi) \in \mf A_\g \cap D([0,T], \mc M^d\times\mc F^1)\, ,\\ 
+ \infty & \hbox{ otherwise,}
\end{cases}
\end{equation}
where the functional ${\mc Q}:D([0,T],\mc F^1)\to[0,\infty]$ is given for a trajectory $\pi \in D([0,T], \mc F^1)$ with
$\pi_t =\rho_t(u) du\, ,\  t\in[0,T$] by the formula
\begin{eqnarray*}
{\mc Q}(\pi) = \sum_{k=1}^d \sup
\Big\{  \int_0^T dt\; \langle \rho_t,\partial_{k}H_t\rangle 
- 2\int_0^Tdt\int_{\L} \sigma (\rho_t(u)) H(t,u)^2\, du \Big\}\, , 
\end{eqnarray*}
in which the supremum is carried over all   $H \in
  C^{\infty}_c([0,T]\times \L)$.   
It has been proved in \cite{blm, flm} that  ${\mc Q}(\pi)$ is finite if and only if 
$\rho\in L^2\big([0,T],H^1(\L)\big)$, and
\begin{equation}\label {tm4}
{\mc Q}(\pi) \;=\; \frac 18 \int_0^T dt\, \int_{\L} du\,
\frac{\big|\nabla \rho_t (u)\big|^2}{\sigma (\rho_t(u))}\;\cdot 
\end{equation}
Notice that $\mf A_\g \cap D([0,T], \mc M^d\times\mc F^1)$ is a closed and convex subset of
$D([0,T], \mc M^{d+1})$. It follows immediately from the concavity of $\s(\cdot)$ that
the functional $ {\mc E}^\gamma$ is convex and lower semicontinuous.   

We now define the large deviations functional for the pair $(\mb W^N, \pi^N)$ in the time interval  $[0,T]$ 
with initial condition $\g$. 
For each $\mb V \in
\big(C^{1,1}([0,T]\times \L)\big)^d$, 
define the functional ${\widehat \J}^T_{\mb V}= {\widehat \J}_{\mb V}^{T,\b} :D([0,T],\mc M^d\times\mc F^1)\to \R$ 
if $\pi_t =\rho_t(u) du\, ,\  t\in[0,T$] by
\begin{equation}\label{eq:4.6}
 {\widehat \J}_{\mb V}^T(\mb W, \pi) \, =\, \mb L_{\mb V}^\b (\mb W, \pi)
\;-\; \frac 12 \int_0^T dt \, \< \sigma(\rho_t),\mb V_t\cdot \mb V_t \>\; ,
\end{equation}
where $\mb L_{\mb V}^\b(\mb W, \pi):=\mb L_{\mb V,T}^\b(\mb W, \pi) $ is a linear function on $\mb V$:
\begin{equation*}
\begin{aligned}
&\mb L_{\mb V}^\b(\mb W, \pi) \,=\, \< \mb W_T, \mb V_T\> \, 
-\, \int_0^T dt \, \< \mb W_t, \partial_t \mb V_t\> \\
&\qquad\qquad
-  \int_0^T dt \, \< \pi_t, \nabla \cdot \mb V_t\>
\;+\; \int_0^T dt
\int_{\Gamma}b(r) \,  \text{\bf n}_1(r)\, V_1(t, r) \, dS(r)  \\
&\qquad\qquad
-  \b\int_0^T \langle \sigma( \rho_t), \mb V_t\cdot \nabla (\jn\star  \rho_t)\rangle  dt\, .
\end{aligned}
\end{equation*}

The large deviations fuctional for $(\mb W^N, \pi^N)$ is finally defined from $D([0,T], \mc M^{d+1})$
to $[0,+\infty]$ by
\begin{equation}
\label{f10}
\mathcal J_T^{\g}(\mb W, \pi) \; =\;
\left\{
\begin{array}{ll} \displaystyle 
\J_T(\mb W, \pi)  & \text{if ${\mc E}^\gamma (\mb W, \pi) < \infty$}\, ,\\
+ \infty & \text{otherwise}\;, 
\end{array}
\right.
\end{equation}
where
$$
\J_T(\mb W, \pi) \, =\, 
\sup_{\mb V \in \big(C^{1,1}([0,T]\times \L)\big)^d}
{\widehat \J}^T_{\mb V}(\mb W, \pi) \, .
$$


It remains to define the rate functional for the empirical measure.
Denote by $\I_T^{\g}=\I_T^{\g,\b}\colon D([0,T], \mc F^1)\longrightarrow \bb [0,\infty]$ 
the functional given for a trajectory $\pi$ with
$\pi_t (du)=\rho_t(u) du\, ,\  t\in[0,T$] by
\begin{equation}\label{Jb1}
\I_T^{\g}(\pi) = \sup_{F\in
  C^{1,2}_0([0,T]\times\L)} {\widehat \I}_F^{T,\g}(\pi)\, ,
\end{equation}
where for any function $F\in {\mc C}_0^{1,2}([0,T]\times \L)$,
${\widehat \I}^{T,\g}_F = {\widehat \I}^{T,\g, \b}_F \colon D([0,T], \mc F^1)\longrightarrow
\bb R$ is given by
\begin{equation*}
{\widehat \I}_F^{T,\g}(\pi) := \ell_F^\b (\rho|\g)\; -\;\frac12 \int_0^{T} \!dt\,
 \big\langle \sigma( \rho_t ), \nabla F_t \cdot \nabla F_t  \big\rangle \; .
\end{equation*}
The definition of $\ell_F^\b (\cdot|\g)$ is given by \eqref{lb1}.

The rate functional $\mc I_T^\g: D([0,T], \mc M) \to
[0,\infty]$ for the empirical measure is then given by
\begin{equation}
\label{3:Ib}
\mc I_T^\g(\pi) =
\begin{cases}
 \displaystyle \I_T^\g(\pi)\ & \hbox{ if } \ \ 
  \pi \in  D([0,T], \mc F^1)\, \text{and}\, \mc Q(\pi)<+\infty\, ,\\ 
+\infty & \hbox{ otherwise .}
\end{cases}
\end{equation}

We are now ready to state the large deviations results:
\begin{theorem}
\label{gd-(d,c)}
Fix $T>0$ and an initial profile $\gamma$ in $\mc C^0(\L)$.  Consider a
sequence $\{\eta^N : N\ge 1\}$ of configurations associated to
$\gamma$ in the sense of \eqref{pfl}. 
Then, for each closed set $\mc C$
and each open set $\mc U$ of $D([0,T], \mc M^{d+1})$, we have
\begin{equation*}
 \begin{aligned}
\varlimsup_{N\to\infty} \frac 1{N^d} \log \bb P_{\eta^N}^\b
\Big[ (\mb W^N, \pi^N) \in \mc C \Big] &\;\le\; - \inf_{(\mb W, \pi) \in \mc C}
{\mc J}_T^\g (\mb W, \pi) \;, \\
\varliminf_{N\to\infty} \frac 1{N^d} \log \bb P_{\eta^N}^\b
\Big[ (\mb W^N, \pi^N) \in \mc U \Big] &\;\ge\; - \inf_{(\mb W, \pi) \in \mc U}
{\mc J}_T^\g (\mb W, \pi) \;.
 \end{aligned}
\end{equation*}
The functional  ${\mc J}_T^\g(\cdot,\cdot)$ is lower semi-continuous.  
\end{theorem}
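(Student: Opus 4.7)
My plan follows the variational approach for dynamical large deviations, adapted to the joint current--density setting and to the non-convex rate functional produced by the Kac interaction. For the upper bound, for each smooth $\mb V \in (\mc C^{1,1}([0,T]\times \L))^d$ I build the exponential martingale obtained by applying Dynkin's formula to $\exp\{N^d\langle \mb W^N_t, \mb V_t \rangle\}$ under $\mf L_N$. Taylor-expanding the compensator (the quadratic correction $e^a-1-a\approx a^2/2$) and running the usual one-block replacement for $\s(\rho_t)$ produces, up to $o_N(1)$, a compensator exactly equal to $\widehat{\cj}_{\mb V}^T(\mb W^N,\pi^N)$ from \eqref{eq:4.6}. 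Markov's inequality plus an optimization in $\mb V$ yield the upper bound on compact sets. I would then verify exponential tightness in $D([0,T],\mc M^{d+1})$: Aldous--Rebolledo for the density component, and a direct exponential-martingale bound on the jump counts for the current. A final energy estimate (ruling out, with exponentially small probability, trajectories with $\mc Q(\pi)=+\infty$) together with the continuity equation \eqref{eq:4.5} (obtained by specializing to $\mb V=\nabla G$) forces trajectories with $\mc E^\g(\mb W,\pi)=+\infty$ to have infinite cost, so the effective upper bound uses $\cj_T^\g$.

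For the lower bound I first treat ``nice'' paths: those $(\mb W,\rho)$ with $\rho$ smooth, strictly between $0$ and $1$, matching $\g$ at time $0$ and $b$ on $\G$, and such that $\dot{\mb W}_t=\dot{\mb J}^\b(\rho_t)+\s(\rho_t)\mb V_t$ for some $\mb V\in (\mc C^{1,2}_c([0,T]\times \L))^d$. For such a path I introduce a perturbed process whose bulk rates are tilted by $\exp\{\mb V_t(x/N)\cdot(y-x)\}$ and verify, via the hydrodynamic analysis of Section~\ref{mlswd}, that the pair $(\mb W^N,\pi^N)$ converges in probability to $(\mb W,\rho)$ under the tilted measure. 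The corresponding Radon--Nikodym derivative equals $\exp\{N^d\widehat{\cj}_{\mb V}^T(\mb W^N,\pi^N)+o(N^d)\}=\exp\{N^d\cj_T^\g(\mb W,\rho)+o(N^d)\}$ along typical trajectories, and a classical entropy-inequality argument delivers the lower bound at these paths. The general lower bound is extended by a density argument: for any $(\mb W,\rho)$ with $\cj_T^\g(\mb W,\rho)<\infty$, construct nice approximants $(\mb W^n,\rho^n)\to (\mb W,\rho)$ in $D([0,T],\mc M^{d+1})$ with $\cj_T^\g(\mb W^n,\rho^n)\to \cj_T^\g(\mb W,\rho)$, in three stages: (i) convex-combine $\rho$ with a smooth reference profile $\theta$ satisfying $\theta|_\G=b$ to push it strictly inside $(0,1)$, adjusting $\mb W$ so as to preserve \eqref{eq:4.5}; (ii) mollify in space and time, using $\rho\in L^2([0,T],H^1(\L))$ coming from $\mc Q(\rho)<\infty$ to control $\nabla\rho^n$; (iii) truncate and match the boundary trace.

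The density step is the principal obstacle. In the convex setting of \cite{bdgjl2}, Jensen's inequality makes the regularization transparent; here the non-local nonlinear term $\s(\rho)\nabla(\jn\star\rho)$ destroys convexity and one cannot simply mollify $(\mb W,\rho)$. The key asset is the smoothing inherent in the Kac convolution: if $\rho^n\to\rho$ in $L^2([0,T]\times \L)$ then $\nabla(\jn\star\rho^n)\to\nabla(\jn\star\rho)$ uniformly on $[0,T]\times\overline\L$, which combined with continuity of $\s$ and the Hilbertian lower semicontinuity of $\mc Q$ allows the recovery sequence to be built. Simultaneously preserving \eqref{eq:4.5}, keeping $\rho^n$ strictly inside $(0,1)$, matching the boundary values, and converging in the quadratic cost demands delicate bookkeeping. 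Lower semicontinuity of $\cj_T^\g$ is then automatic: it is the supremum of continuous functionals on the closed convex set $\mf A_\g\cap D([0,T],\mc M^d\times\mc F^1)$, on which $\mc E^\g$ is itself lower semicontinuous by definition \eqref{1:Q}.
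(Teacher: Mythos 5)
Your overall plan matches the paper's: Girsanov, superexponential replacement, energy estimate and continuity-equation exclusion for the upper bound; perturbed dynamics for nice paths plus a $\mc J_T^\g$-density argument for the lower bound. But the density step has a concrete flaw. You propose (i) convex-combining $\rho$ with a \emph{static} profile $\theta$ to push it inside $(0,1)$, and (ii) mollifying $(\mb W,\rho)$ directly. Both operations break constraints that $\mc J_T^\g<\infty$ enforces: convex-combination with a static $\theta$ changes $\rho_0$ unless $\theta=\g$ (but $\g$ may touch $0$ or $1$, so it cannot also push you strictly inside), and direct mollification of $\rho$ destroys the boundary trace $\rho_t|_\Gamma=b$ and the initial condition $\rho_0=\g$; the "truncate and match the boundary" you add in (iii) would then generically change the value of the rate functional in an uncontrolled way. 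The paper's fix is twofold and qualitatively different from what you wrote. First, insert a heat-flow segment at the start (replace $(\mb W_t,\pi_t)$ by $(\mb W^{(0)}_t,\pi^{(0)}_t)$ on $[0,\ve]$, its time-reversal on $[\ve,2\ve]$, and then the shifted original path); this preserves $\rho_0=\g$ while making the path equal to a parabolically-regularized one near $t=0$. Then convex-combine with that \emph{time-dependent} heat solution, which starts at $\g$, so the initial condition is preserved while $\rho^\ve_t$ is pushed off the boundary of $[0,1]$ for $t\ge\d$. Second, smoothness (condition (iv)) is \emph{not} obtained by mollifying the state: it is obtained by Riesz representation (Lemma \ref{rep0}), which produces controls $\mb U\in\LL^2(\s(\pi))$ and $F\in H^1_0(\s(\pi))$ such that $\rho$ solves \eqref{eq:V}; one then approximates $\mb U$ and $F$ by smooth controls in $L^2$ and \emph{solves the PDE} with the smoothed controls. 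This control-to-state approximation automatically keeps the boundary and initial data fixed, which direct state mollification cannot do.

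Your closing claim that lower semicontinuity of $\mc J_T^\g$ is "automatic" because it is a sup of continuous functionals is not correct. The maps $(\mb W,\pi)\mapsto\widehat\J^T_{\mb V}(\mb W,\pi)$ are \emph{not} continuous in the topology of $D([0,T],\mc M^{d+1})$: the terms $\int_0^T\<\s(\rho_t),\mb V_t\cdot\mb V_t\>dt$ and $\b\int_0^T\<\s(\rho_t),\mb V_t\cdot\nabla(\jn\star\rho_t)\>dt$ involve the nonlinear function $\s(\rho)$, which is not continuous under weak-$*$ convergence of measures. The paper recovers LSC only on the sublevel sets of $\mc E^\g$: there $\mc Q(\pi)<\infty$ gives $\rho\in L^2([0,T],H^1)$, and Proposition \ref{g06p} upgrades weak $L^2$ convergence of $\rho^n$ to strong $L^2$ convergence, which is what makes the $\s(\rho^n)$ terms pass to the limit. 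So LSC, like the upper bound itself, genuinely requires the energy cutoff built into the definition \eqref{f10}, and is not a formal consequence of taking suprema.
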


We prove this Theorem in sections \ref{secldub} and \ref{secldlb}.
We have the following dynamical large deviation principle for the empirical measure.

\begin{theorem}
\label{s02}
Fix $T>0$ and an initial profile $\gamma$ in $\mc C^0(\L)$.  Consider a
sequence $\{\eta^N : N\ge 1\}$ of configurations associated to
$\gamma$ in the sense of \eqref{pfl}.   
Then, the sequence of probability measures $\{ Q_{\eta^N}^{\beta} : N\ge 1\}$ on $D([0,T], \mc M)$
satisfies a large deviation principle with speed $N$ and  rate
function $\mc I_T^\g(\cdot)$, defined in \eqref {3:Ib}:    
\begin{eqnarray*}
&& 
\varlimsup_{N\to\infty} \frac 1{N^d} \log Q_{\eta^N}^{\beta}
\big( \pi^N \in C\big)
\;\leq\; - \inf_{\pi \in C} \mc I_T^\g (\pi)  
\\
&& 
\varliminf_{N\to\infty} \frac 1{N^d} \log Q_{\eta^N}^{\beta}
\big( \pi^N\in U \big) \;\geq\; -  \inf_{\pi \in U} \mc I_T^\g (\pi) \;, 
\end{eqnarray*}
for any   closed set  $C \subset D([0,T], \mc M)$   and    open set $U  \subset D([0,T], \mc M)$.
   The functional  $\mc I_T^\g(\cdot)$    is  lower semi-continuous and has compact level sets.  
\end{theorem}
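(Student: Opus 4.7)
The plan is to deduce Theorem \ref{s02} from Theorem \ref{gd-(d,c)} by the contraction principle applied to the continuous projection $\Pi \colon D([0,T], \mc M^{d+1}) \to D([0,T], \mc M)$, $\Pi(\mb W, \pi) = \pi$. Since $\pi^N = \Pi(\mb W^N, \pi^N)$, an LDP at speed $N^d$ for $\pi^N$ with rate function
\begin{equation*}
\tilde{\mc I}^\gamma_T(\pi) \, :=\, \inf_{\mb W \in D([0,T], \mc M^d)} \mc J^\gamma_T(\mb W, \pi)
\end{equation*}
is automatic. The content of the proof is then the identification $\tilde{\mc I}^\gamma_T = \mc I^\gamma_T$.

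For the inequality $\tilde{\mc I}^\gamma_T(\pi) \ge \mc I^\gamma_T(\pi)$, I would restrict the variational problem defining $\J_T$ to gradient fields $\mb V = \nabla F$ with $F \in \mc C^{1,2}_0([0,T]\times\L)$. Assuming that $(\mb W,\pi) \in \mf A_\gamma \cap D([0,T], \mc M^d\times \mc F^1)$ with $\mc Q(\pi) < \infty$ (otherwise $\mc J_T^\g(\mb W,\pi) = +\infty$), the continuity equation \eqref{eq:4.5} applied at time $t$ to the functions $F_t$ and $\partial_t F_t$ yields $\langle \mb W_t, \nabla F_t\rangle = \langle \pi_t - \gamma, F_t\rangle$ and $\langle \mb W_t, \nabla \partial_t F_t\rangle = \langle \pi_t - \gamma, \partial_t F_t\rangle$. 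After integration by parts in time (using $\int_0^T \langle \gamma, \partial_t F_t\rangle \,dt = \langle \gamma, F_T - F_0\rangle$), the linear part $\mb L^\beta_{\nabla F}(\mb W, \pi)$ transforms exactly into $\ell^\beta_F(\rho|\gamma)$, and the quadratic penalty coincides with that of $\widehat{\I}^{T,\gamma}_F$. Hence $\widehat{\J}^T_{\nabla F}(\mb W,\pi) = \widehat{\I}^{T,\gamma}_F(\pi)$, and taking the supremum over $F$ and then the infimum over $\mb W$ gives $\tilde{\mc I}^\gamma_T(\pi) \ge \mc I^\gamma_T(\pi)$.

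For the reverse inequality I would exhibit an optimal current. When $\mc I^\gamma_T(\pi) < \infty$, the analysis of the density rate function (following \cite{blm, mo3}) supplies $H \in L^2([0,T], H^1_0(\L))$ such that $\rho$ satisfies the perturbed equation
\begin{equation*}
\partial_t \rho_t \,=\, \Delta \rho_t - \beta \nabla \cdot \bigl(\sigma(\rho_t) \nabla (\jn \star \rho_t)\bigr) - \nabla \cdot \bigl(\sigma(\rho_t) \nabla H_t\bigr),
\end{equation*}
with $\rho_t|_\Gamma = b$ and $\rho_0 = \gamma$, and such that $\mc I^\gamma_T(\pi) = \tfrac{1}{2}\int_0^T \langle \sigma(\rho_t), |\nabla H_t|^2\rangle\, dt$. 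Setting $\dot{\mb W}^\star_t := \dot{\mb J}^\beta(\rho_t) - \sigma(\rho_t) \nabla H_t$ and $\mb W^\star_t := \int_0^t \dot{\mb W}^\star_s\, ds$, the perturbed equation rewrites as the continuity equation $\partial_t \rho_t + \nabla \cdot \dot{\mb W}^\star_t = 0$, so $(\mb W^\star, \pi) \in \mf A_\gamma$. Completing the square in $\widehat{\J}^T_{\mb V}(\mb W^\star, \pi)$ shows that the supremum is attained in the direction $\mb V = \nabla H$ and equals $\tfrac{1}{2}\int_0^T \langle \sigma(\rho_t), |\nabla H_t|^2\rangle\, dt = \mc I^\gamma_T(\pi)$, whence $\tilde{\mc I}^\gamma_T(\pi) \le \mc J^\gamma_T(\mb W^\star, \pi) = \mc I^\gamma_T(\pi)$.

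The remaining assertions about $\mc I^\gamma_T$ are then standard consequences: lower semi-continuity follows from the representation as a supremum of continuous functionals together with lower semi-continuity of $\mc Q$, and compactness of the level sets $\{\mc I^\gamma_T \le M\}$ is obtained via the energy estimate $\mc Q(\pi) \le C(\mc I^\gamma_T(\pi)+1)$, which confines the densities to a bounded subset of $L^2([0,T], H^1(\L))$; combined with the time equicontinuity inherited from the weak formulation, this yields precompactness in $D([0,T], \mc M)$ by a Rellich--Kondrachov/Ascoli-type argument. The main obstacle I anticipate is the construction of the optimal $H$: when $\rho$ approaches $0$ or $1$ the weight $\sigma(\rho)^{-1}$ degenerates, so the existence of $H$ with the prescribed divergence must be argued via a Hilbert-space duality on the completion of gradient fields in the $\sigma(\rho)$-weighted norm, rather than by a direct variational calculation.
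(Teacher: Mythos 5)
Your proposal follows essentially the same route as the paper. The paper invokes Lemma \ref{compar1} for the inequality $\widetilde{\mc I}_T^\gamma \ge \mc I_T^\gamma$ (which encapsulates exactly the computation you spell out: restricting $\widehat{\J}^T_{\mb V}$ to gradient fields $\mb V = \nabla F$ and using the continuity equation together with an integration by parts in time to recover $\ell_F^\beta(\rho|\gamma)$), and for the reverse inequality it appeals to Lemma \ref{rep0}, which, exactly as you anticipate at the end of your proposal, constructs the optimal potential $F$ by Riesz representation in the completion $H^1_0(\sigma(\pi))$ of $\mc C^{1,2}_0([0,T]\times\L)$ under the $\sigma(\rho)$-weighted inner product, precisely so as to circumvent the degeneracy of $\sigma(\rho)^{-1}$ near $\rho\in\{0,1\}$. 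Note that your initial statement that the density analysis supplies $H \in L^2([0,T], H^1_0(\L))$ is too strong and is the place where the weighted Hilbert space argument you later mention is genuinely required; the paper's Lemma \ref{rep0} is formulated directly in $H^1_0(\sigma(\pi))$, with the stronger unweighted regularity only recovered under the nondegeneracy hypotheses (iii) of the regular class $\mc A^0$, not for general $\pi$ with finite rate. Your remarks on lower semi-continuity and compact level sets are consistent with what the paper derives in Proposition \ref{s03} and Lemma \ref{lem01}.
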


The proof of this Theorem is given in Section \ref{ldp-ed}. It relies on
Theorem \ref{gd-(d,c)} and the  contraction principle.

\bigskip
\section{The perturbed dynamics and basic tools}\label{mlswd}
In this section, we consider the perturbation of the  original
process \eqref{eq:gen}, and we prove some results   needed either to caracterize the behavior of the empirical current and the
empirical density, either to   prove large deviations principle.  

\subsection{The modified process}\label{s-perturb}
Fix $T>0$, a time dependent vector-valued function $\mb V =(V_1,
\dots, V_d) \in \big(\mc C^{0,0}( [0,T]\times \L)\big)^d$ and asmooth function $H\in \mc C^{0,0}( [0,T]\times \G)$. 
Define at time $t$, $0\le t\le T$,  the following generators of a time inhomogeneous Markov process  on $\Sigma_N$
\begin{equation*}
 \begin{aligned}
\big(\cl_{\b,N}^{\mb V} f\big) (\eta) &\, = \,
 \sum_{i=1}^d\sum_{x,x+\vecte_i \in \L_N} C_{N,t }^{\b, V_i}(x,x+\vecte_i;\eta) \left[ f(\eta^{x,x+\vecte_i}) -f(\eta)\right]\, , \\
(L_{b,N}^H f)(\eta) &\,=\,  \sum_{x \in \Gamma_N}
r_{x,t}^H\big(b(x/N),\eta \big) \big[ f(\sigma^{x} \eta)-f(\eta)\big]  \, ,
 \end{aligned}
\end{equation*}
where the rate function $C_{N,t }^{\b, V_i}(x,x+\vecte_i;\eta)$ is defined through the rate $C_N^\b$ by     
\begin{equation}\label{rate-V}
C_{N,t}^{\b, V_i} (x,x+\vecte_i;\eta) \, = \, 
C_{N}^{\b} (x,x+\vecte_i;\eta) 
e^{-[  \eta(x+e_i)-\eta(x)]N^{-1} V_i(t,x/N)} \, ,
\end{equation}
and the rate at the boundary $r_{x,t}^H\big(b(x/N),\eta \big)$) is defined through the rate $r_x$ as
\begin{equation}\label{rate-H}
r_{x,t}^H\big(b(x/N),\eta \big) \, = \, r_x \big(b(x/N),\eta \big) 
 e^{(2\eta(x) -1)N^{-1} H(t,x/N)}\, .
\end{equation}

For a probability measure  $\mu_N$ on $\Sigma_N$  denote by 
$\Pb_{\mu_N}^{\b,\mb V,H}$ the law of the inhomogeneous Markov process $(\eta_t)_{t\in [0,T]}$ on the path space 
$D\big( [0,T], \Sigma_N\big)$ with generator $\mf L_N^{\mb V,H} =N^2\cl_{\b,N}^{\mb V} +N^2 L_{b,N}^H$ and initial distribution 
$\mu_N$. Let $Q_{\mu_N}^{\b,\mb V,H}$ be the measure of the process $(\pi_t^N)_{t\in[0,T]}$
on the state space $D\big( [0,T],\mc M)$ induced from $\Pb_{\mu_N}^{\b,\mb V,H}$.

\begin{proposition}\label{hy-curr}
 Let $\mu_N$  be a sequence of probability measures
 on $\Sigma_N$  corresponding to  a macroscopic profile $\gamma$ in the sense of \eqref{pfl}. Then 
the sequence of probability measures $Q_{\mu_N}^{\b,\mb V,H}$ converges   as  $N\uparrow \infty$, to $Q^{\b,\mb V}$. 
This limit point is concentrated on the unique weak solution $\rho^{\b,\mb V}$ in $L^2([0,T],H^1(\L))$ of the following boundary value problem  
\begin{equation} 
\label{eq:V}
\begin{cases}
{\displaystyle
\partial_t \rho
+ \,   \nabla \cdot \big\{  \sigma(\rho) \big[\b \nabla (\jn  \star \rho )  \, +\, \mb V\big] \big\}
= \, \Delta \rho}\\
{\displaystyle
\vphantom{\Big\{}
\rho (t, \cdot){\big\vert_\Gamma} =\; b (\cdot)  \quad 
\text {for } \ \ 0\le t\le T \;,
} 
\\
{\displaystyle
\rho_0(u) =\g (u) \; .
}
\end{cases}
\end{equation}
Moreover, for each $t>0$, $\delta>0$ and  $\mb G\in (\mc C^1(\L))^d$, we have
\begin{equation}\label{eq:V2}
\lim_{N\to\infty} \Pb_{\mu_N}^{\b,\mb V,H} \Big[\, \Big\vert 
\left\langle \mb W_t^N, \mb G\right\rangle \;-\; \int_0^t ds\, 
\big\langle {\dot {\mb J}}(\rho^{\b,\mb V}), \mb G \big\rangle\
\Big\vert > \delta \Big ] \;=\; 0\; ,
\end{equation}
where ${\dot {\mb J}}(\rho^{\b,\mb V})$ is  is the instantaneous current
associated to $\rho^{\b,\mb V}$ and is given by
$$
{\dot {\mb J}}(\rho^{\b,\mb V})\, =\, -\nabla \rho^{\b,\mb V}+
   \sigma (\rho^{\b,\mb V}) \big[\b \nabla (\jn  \star \rho^{\b,\mb V} )  \, +\, \mb V\big] \, .
$$

\end{proposition}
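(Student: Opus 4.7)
The plan is to follow the standard hydrodynamic program of \cite{mo3} applied to the perturbed generator $\mf L_N^{\mb V,H}$, and then to adapt the martingale argument of Proposition \ref{curr} in order to identify the limit of the empirical current. I would first prove tightness of $\{Q_{\mu_N}^{\b,\mb V,H}\}_{N\ge 1}$ in $D([0,T],\mc M)$, identify every limit point as concentrated on weak solutions of \eqref{eq:V} belonging to $L^2([0,T],H^1(\L))$, and then conclude via uniqueness of such solutions. The convergence \eqref{eq:V2} will follow, once the density limit is known, from Dynkin's formula applied to $\langle \mb W_t^N,\mb G\rangle$ and a second moment bound on the associated martingale.

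The central computation for the identification of the limit is to evaluate $\mf L_N^{\mb V,H}\langle \pi^N,F\rangle$ for $F\in \mc C^{1,2}([0,T]\times\L)$ and to Taylor expand the perturbed rates. Using
$$
e^{-[\eta(x+\vecte_i)-\eta(x)]N^{-1}V_i(t,x/N)}= 1 -[\eta(x+\vecte_i)-\eta(x)]N^{-1}V_i(t,x/N) + O(N^{-2}),
$$
the bulk generator $N^2\cl_{\b,N}^{\mb V}$ reproduces, at leading order, the discrete Laplacian of $F$ together with the convolution drift $\b\sigma(\rho)\nabla(\jn\star\rho)\cdot\nabla F$, while the correction of order $N^2\cdot N^{-1}$ per edge produces, after the standard one- and two-block replacements that identify the local function $\eta(x)(1-\eta(x+\vecte_i))+\eta(x+\vecte_i)(1-\eta(x))$ with $\sigma(\rho(x/N))$, the additional drift $\langle\sigma(\rho_t),\mb V_t\cdot\nabla F_t\rangle$ appearing in the weak form of \eqref{eq:V}. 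At the boundary, $H$ enters $L_{b,N}^H$ only through $O(N^{-1})$ corrections; the limiting behaviour is therefore that of the unperturbed boundary dynamics, which enforces the Dirichlet trace $\rho|_\Gamma=b$, and this is why $H$ is absent from \eqref{eq:V}. Tightness follows from Aldous' criterion combined with the above bounds.

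The part I expect to require the most care is uniqueness of weak solutions of \eqref{eq:V} in $L^2([0,T],H^1(\L))$. Given two such solutions $\rho_1,\rho_2$ with the same initial datum $\g$ and the same trace $b$, the difference $w=\rho_1-\rho_2$ lies in $L^2([0,T],H_0^1(\L))$ and may be used as test function in the weak formulation. Since $\jn$ is smooth and $\sigma$ is Lipschitz on $[0,1]$, the map $\rho\mapsto\sigma(\rho)[\b\nabla(\jn\star\rho)+\mb V]$ is Lipschitz continuous from $L^2(\L)$ to $L^2(\L)^d$, and a standard energy estimate yields
$$
\frac{d}{dt}\|w(t)\|_{L^2(\L)}^2 + \|\nabla w(t)\|_{L^2(\L)}^2 \le C\|w(t)\|_{L^2(\L)}^2,
$$
so that $w\equiv 0$ by Gronwall's lemma. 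The analogous energy estimate at the discrete level, obtained by controlling the relative entropy of $\Pb_{\mu_N}^{\b,\mb V,H}$ with respect to the reference product measure $\nu_\theta^N$ and using the Dirichlet form of the symmetric part of the dynamics, also ensures that limit points lie in $L^2([0,T],H^1(\L))$.

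Finally, for the empirical current, Dynkin's formula gives
$$
\langle\mb W_t^N,\mb G\rangle = M_t^{N,\mb G} + \int_0^t \mf L_N^{\mb V,H}\langle\mb W_s^N,\mb G\rangle\, ds,
$$
and a direct computation of the compensator from \eqref{cur-emp}, \eqref{empcurr}, \eqref{rate-V} and \eqref{rate-H}, combined with the same Taylor expansion and block replacement as above, shows that this integral is close to
$$
\int_0^t \big\langle -\nabla\rho_s^N+\sigma(\rho_s^N)\big[\b\nabla(\jn\star\rho_s^N)+\mb V_s\big],\mb G\big\rangle\, ds,
$$
which converges in probability to $\int_0^t\langle\dot{\mb J}(\rho_s^{\b,\mb V}),\mb G\rangle\, ds$ by the density hydrodynamic limit just established. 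The quadratic variation of $M_t^{N,\mb G}$ is bounded by $C(T,\mb G)\, N^{-d}$, since there are $O(N^d)$ oriented edges, each producing jumps of size $N^{-(d+1)}$ at rate $O(N^2)$, so $M_t^{N,\mb G}\to 0$ in $L^2$, which yields \eqref{eq:V2}.
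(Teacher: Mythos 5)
Your proposal is correct and follows essentially the same route as the paper: the density limit is established along the lines of \cite{mo3}, and the current limit \eqref{eq:V2} is obtained from the compensator decomposition of the counting processes (the paper's martingales $\widetilde W_t^{x,y}$, $\widetilde W_t^y$, which is what your appeal to ``Dynkin's formula'' amounts to), a Taylor expansion of the perturbed rates via Lemma \ref{b1}, a summation by parts, the replacement Lemma (Proposition \ref{see}), and the vanishing quadratic variation bound $O(N^{-d})$ you compute. The only point worth phrasing more carefully is the boundary discussion: the reason $H$ drops out of \eqref{eq:V} is not merely that the boundary rate is perturbed by $1+O(N^{-1})$, but that the weak formulation uses test functions vanishing on $\Gamma$ so the boundary generator contributes nothing to the bulk identity, and the Dirichlet trace $\rho|_\Gamma=b$ is recovered separately via the boundary replacement estimate, which is robust under the $O(N^{-1})$ multiplicative perturbation of the rate (cf.\ Lemma \ref{dirichlet}).
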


\begin{proof}
The identification of the limit for the empirical density $(\pi^N(\eta_t))_{t\in[0,T]}$  is similar to the one
of \cite{mo3}. We therefore switch to the limit \eqref{eq:V2}. Following the same steps  as in \cite{bdgjl2},
we consider the family of jump martingales 
\begin{equation*}
 \begin{aligned}
&{\widetilde W}_t^{x,y} \, =W_t^{x,y}
   -N^2\int_0^t \big[\eta(x) -\eta(y)\big]C_{N,t}^{\b, V_i} (x, y;\eta_s)
  ds\ \textrm{for}\   y=x+e_i\, , \ x, y\in \L_N\, ,\\
&{\widetilde W}_t^{y} \, =W_t^{y}-N^2\int_0^t
\Big\{\eta_s(y)(1-b(y/N))e^{N^{-1}H(s,y/N)} \\
&\qquad\qquad\qquad \qquad\qquad -(1-\eta_s(y))b(y/N)e^{-N^{-1}H(s,y/N)}
\Big\}ds\; ,\qquad  y\in \G_N\, .
\end{aligned}
\end{equation*}

Recall from \eqref{cur-emp} the definition of the empirical measures $(W_{j,t}^N)_{t\ge 0}\; , 1\le j\le d$.
Fix a smooth vector field ${\mb G}=(G_1,\cdots,G_d)\in \big(\mc C^{1,1}([0,T]\times\L))^d$, and  
consider the   $\Pb_{\mu^N}^{\b,\mb V,H}$-martingale 
${\widetilde {\mb W}}_t^{\mb G, \mb V,H}\equiv \tbw_t^{{\mb G},\mb V,H, N,\b}$,
$t\in[0,T]$, defined by 
\begin{equation*}
 \begin{aligned}
&{\widetilde {\mb W}}_t^{\mb G, \mb V,H} \, =\,
\sum_{k=1}^d \Big\{ \big\langle W_{k,t}^N\, , \,  G_k \big\rangle\\
&\ \ 
-\frac{N^2}{N^{d+1}} \sum_{x,x+\vecte_k\in \L_N}  G_k(x/N) \int_0^t \big[\eta_s(x) -\eta_s(x+\vecte_k)\big]
C_{N,t}^{\b,V_k} \big(x,x+\vecte_k; \eta_s\big) ds \Big\}\\
&\ \ 
-\frac{N^2}{N^{d+1}} \sum_{x\in \G_N}  G_1(x/N) \int_0^t 
\Big\{\eta_s(x)(1-b(x/N))e^{N^{-1}H(s,x/N)} \\
&\qquad\qquad\qquad \qquad\qquad \qquad\qquad-(1-\eta_s(x))b(x/N)e^{-N^{-1}H(s,x/N)}
\Big\}ds\, .
 \end{aligned}
\end{equation*}
From Lemma \ref{b1} and  Taylor expansion the integral term of  the last expression is equal to
\begin{equation*}
 \begin{aligned}
&-\frac{N^2}{N^{d+1}} \sum_{k=1}^d\sum_{x,x+\vecte_k\in \L_N} G_k(x/N)
\int_0^t \big[\eta_s(x) -\eta_s(x+\vecte_k)\big]\, ds\\
&\ 
-\frac{1}{N^{d}} \sum_{k=1}^d\sum_{x,x+\vecte_k\in \L_N} G_k(x/N)
\int_0^t \big[\eta_s(x) -\eta_s(x+\vecte_k)\big]^2\Upsilon^{\b,\mb V}_k(\pi^N(\eta_s),s,x/N)\, ds\\
&\ 
-\frac{1}{N^{d-1}} \sum_{x\in \G_N}  G_1(x/N) \int_0^t \big[\eta_s(x) -b(x/N)\big] ds
\, + \, O_{\mb G,\b,\mb V,H}\big(N^{-1}\big)\, ,
\end{aligned}
\end{equation*}
where for $1\le k\le d$, $\eta\in \Sigma_N$, $s\ge 0$ and $x\in\L_N$,
$$
\Upsilon^{\b,\mb V}_k(\pi^N(\eta),s,x/N)\, =\, \b
\partial_k^N( \jn\star \pi^N(\eta_s))(x/N) \,+\,V_k(s,x/N)\, ,
$$
for any smooth function $G$, $\partial_j^N G$ is defined in \eqref{derive1},
and $O_{\mb G,\b, \mb V,H}\big(N^{-1}\big)$ is an expression whose absolute
value is  bounded by $C N^{-1}$ for some constant depending on $\mb G$, $\b$, $\jn$, $\mb V$ and $H$.
A summation by parts and Taylor expansion permit to rewrite the martingale ${\widetilde {\mb W}}_t^{\mb G, \mb V,H}$  as
\begin{equation*}
 \begin{aligned}
&{\widetilde {\mb W}}_t^{\mb G, \mb V,H} \, =\, 
\big\langle \mb W_{t}^N\, , \,  \mb G \big\rangle\,
-\, \frac{1}{N^{d}} \sum_{k=1}^d \sum_{x\in \L_N\setminus\G_N}  \int_0^t  \, ds\, \big(\partial_k G_k\big)(x/N) \eta_s(x) \\
&\ \ 
-\, \frac{1}{N^{d}} \sum_{k=1}^d \sum_{x\in \L_N\setminus\G_N}  \int_0^t  \, ds\, G_k({x}/N)\big[ \eta_s(x) -\eta(x+e_k)\big]^2
\Upsilon^{\b,\mb V}_k(\pi^N(\eta_s),s,x/N)\Big] \\
&\ \ 
-\Big\{\frac{1}{N^{d-1}} 
\sum_{x\in \G_N^-}  G_1(x/N) \int_0^t \eta_s(x) ds \, -\, \frac{1}{N^{d-1}} 
\sum_{x\in \G_N^+}  G_1(x/N) \int_0^t \eta_s(x) ds\Big\}\\
&\ \ 
-\frac{1}{N^{d-1}} \sum_{x\in \G_N}  G_1(x/N) \int_0^t \big[\eta_s(x) -b(x/N)\big] ds
\, + \, O_{\mb G,\b,\mb V,H}\big(N^{-1}\big)\, .
 \end{aligned}
\end{equation*}
Here, $\Gamma_N^-$, resp. $\Gamma_N^+$, stands for the left, resp. right, boundary of $\L_N$:
\begin{equation*}
\Gamma^{\pm}_N = \{(x_1,\cdots,x_d)\in \Gamma_N: x_1 = \pm N\}
\end{equation*}
Next, we use  the replacement lemma stated in 
Proposition \ref{see}. We   obtain that the martingal ${\widetilde {\mb W}}_t^{\mb G, \mb V,H} $ can
be replaced by
\begin{equation*}
 \begin{aligned}
&
\big\langle \mb W_{t}^N\, , \,  \mb G \big\rangle -
\int_0^t ds\Big\{\sum_{k=1}^d <\pi^N(\eta_s), \partial_k G_k>\Big\}\,
+ \, \int_0^t ds
\int_{\Gamma}G_1(r) \, b(r) \,  \text{\bf n}_1(r) \, dS(r)\\
&\qquad\qquad
\, -\,
\int_0^t ds\Big\{\frac1{N^d}\sum_{k=1}^d\sum_{x\in \L_N\setminus\G_N}  G_k(x/N)\sigma \left(\eta_s^{\varepsilon N}(x)   \right) 
\Upsilon^{\b,\mb V}_k(\pi^N(\eta_s),s,x/N) \Big\}\,  .
 \end{aligned}
\end{equation*}
On the other hand, a simple computation shows that the expectation of the quadratic variation of the martingale 
${\widetilde {\mb W}}_t^{\mb G, \mb V,H}$ vanishes as $N\uparrow 0$. Therefore, by Doob's
inequality, for every $\d >0$, 
\begin{equation}
\label{bv}
\lim_{N\rar \infty} \Pb_{\mu_N}^{\b,\mb V,H} \Big[ \sup_{0\le t\le T}
|{\widetilde {\mb W}}_t^{\mb G, \mb V,H} |>\d \Big] \; =\;0 \; . 
\end{equation}
Finally, recall that by the first part of the proposition, the empirical density converges to the solution of the
equation \eqref{eq:V}. This concludes the proof.
\end{proof}
\medskip

\subsection{Some useful tools}
In this section we collect some technical results which will be used in the proof both of
the hydrodynamic limit and of the dynamical large deviation principle.
 We start by some properties of the potential $\jn(\cdot,\cdot)$  easily 
obtained by its definition.

\begin{lemma}\label{lem-jn}
The potential $\jn(\cdot,\cdot)$ is a symmetric probability kernel. Moreover for any regular function 
$F:\L\to \R$ and $1\le k\le d$, 
we have the following:
\begin{equation}
\label{newman}
\Big|\partial_k \Big( \int_\L \jn(u,v)F(v) dv\Big)\Big|\le \int_\L \jn(u,v)  \big|\partial_k F(v)\big| dv\, ,
\end{equation}
where for $1\le k\le d$, $\partial_k F$ is the partial derivative in the direction $\vecte_k$.
In particular, if $|\cdot|_1$ stands for the $l_1$ norme of $\R^d$, then
\begin{equation}
 |  \nabla(J \star F)(u)\big|_1 \, \le\, \big(J\star \big|\nabla F\big|_1\big) (u)\, .
\end{equation}

\end{lemma}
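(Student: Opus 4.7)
The plan is to reduce every claim about $\jn$ on $\L$ to standard properties of the translation-invariant kernel $J$ on $\Ss_d$ by exploiting the Neumann-type reflection built into the definition \eqref{tm1}. For symmetry, each of the three summands in \eqref{tm1} is symmetric in $(u,v)$: this follows from $J(u,v)=J(0,v-u)=J(0,u-v)$ together with the observation that the map $v\mapsto v+2(1-v_1)\vecte_1$ is an involution which, combined with its $u$-counterpart, leaves the second summand unchanged under $u\leftrightarrow v$ (and likewise for the term with $-2(1+v_1)\vecte_1$). For unit mass, I perform the changes of variables $w=v+2(1-v_1)\vecte_1$ and $w=v-2(1+v_1)\vecte_1$ in the second and third summands; these are measure-preserving bijections of $\L$ onto $(1,3)\times\TT^{d-1}$ and $(-3,-1)\times\TT^{d-1}$. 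The three contributions reassemble into $\int_{[-3,3]\times\TT^{d-1}} J(u,w)\,dw$, and since $J(u,\cdot)$ is supported in $\{|w_1-u_1|\le 1\}\subset [-2,2]\times\TT^{d-1}$ for $u\in\L$, this equals $\int_{\Ss_d} J(u,w)\,dw=1$.

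For the gradient bound, given a $\mc C^1$ function $F:\overline\L\to\R$, extend it by Neumann-type reflection to $\bar F$ on $\Ss_d$ via $\bar F(v)=F(v)$ on $\overline\L$, $\bar F(v)=F(2-v_1,v_2,\dots,v_d)$ for $v_1\in(1,3]$, $\bar F(v)=F(-2-v_1,v_2,\dots,v_d)$ for $v_1\in[-3,-1)$, and by zero outside $[-3,3]\times\TT^{d-1}$. The same changes of variables as in the first paragraph give
\[
\int_\L \jn(u,v)\, F(v)\,dv \;=\; \int_{\Ss_d} J(u,v)\,\bar F(v)\,dv \;=\; (J\star\bar F)(u),
\]
with $\star$ denoting convolution on $\Ss_d$. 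Using the translation invariance identity $\partial_{u_k}J(u,v)=-\partial_{v_k}J(u,v)$ and integrating by parts in $v$ — legitimate because $J(u,\cdot)$ is compactly supported and $\bar F$ is continuous across the reflection seams $\{v_1=\pm 1\}$, so its distributional $v$-derivative has no singular part — yields $\partial_k(\jn\star F)(u)=\int_{\Ss_d} J(u,v)\,\partial_{v_k}\bar F(v)\,dv$. Taking absolute values inside the integral and observing that the reflection only flips the sign of $\partial_{v_1}F$ (so $|\partial_{v_k}\bar F|$ is the Neumann-reflected extension of $|\partial_k F|$) lets me run the change of variables in reverse to recover \eqref{newman}. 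The final display of the lemma is then immediate from $\partial_k(J\star F)=J\star\partial_k F$, the pointwise bound $|J\star g|\le J\star|g|$ valid because $J\ge 0$, and summing over $k$.

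The one subtle point is the integration by parts against $\bar F$: a naive derivative would produce delta-function contributions supported on $\{v_1=\pm 1\}$ if $\bar F$ were discontinuous there. The Neumann-type reflection is precisely designed so that $\bar F$ remains continuous across these seams (only its normal derivative flips sign), which kills the singular part and reduces the argument to a routine piecewise-smooth calculation.
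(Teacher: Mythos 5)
Your approach — reducing statements about $\jn$ on $\L$ to statements about $J$ on $\Ss_d$ via the unfolded extension $\bar F$ — is the natural one and is essentially the argument behind Lemma~3.1 of the cited reference (which the paper defers to). Your treatment of unit mass and, most importantly, of the gradient bound \eqref{newman} is correct: the changes of variables, the fact that $J(u,\cdot)$ is supported in $(-2,2)\times\TT^{d-1}$ for $u\in\L$, the identity $\partial_{u_k}J=-\partial_{v_k}J$, the integration by parts with no seam contributions because $\bar F$ is continuous across $\{v_1=\pm 1\}$, and the observation that $|\partial_{v_k}\bar F|$ is itself the reflected extension of $|\partial_k F|$ all fit together cleanly.

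There are, however, two points that are not quite right. First, the symmetry claim. You assert that each of the three summands in \eqref{tm1} is individually symmetric in $(u,v)$, and that this follows from $J(0,w)=J(0,-w)$ together with the involutive nature of the reflection. But that is not enough: writing $J(0,\cdot)=J_0$ and $R_{\pm}v=(\pm 2-v_1,v')$, one has $J(u,R_+v)=J_0(2-u_1-v_1,\,v'-u')$ while $J(v,R_+u)=J_0(2-u_1-v_1,\,u'-v')$; the first (axial) entry agrees but the transverse entries are negated, so equality requires $J_0$ to be even separately in the transverse variables (equivalently, given the global evenness, even in $w_1$). With the hypotheses as literally stated in Section~2 — only $J(u,v)=J(v,u)=J(0,v-u)$ — this extra invariance does not follow, and one can build $J_0$ globally even but not axially even for which $\jn$ fails to be symmetric; the compact support of $J_0$ in $w_1$ does not rescue this. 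This reflection-invariance of $J_0$ is almost certainly an implicit assumption in the paper (it is needed for the image construction to make sense), but your argument should have identified and stated it rather than asserting that the ordinary symmetry of $J$ suffices.

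Second, the final display. You read it as a statement about the unfolded kernel $J$ on $\Ss_d$ and deduce it from $\partial_k(J\star F)=J\star\partial_k F$ and $J\ge 0$. That is a valid fact, but it is not what the lemma asserts and it is not a consequence of \eqref{newman} (the ``in particular'' would be vacuous). The $J$ in the last display is a notational slip for $\jn$: for $\jn$ the convolution is over $\L$, the commutation $\partial_k(\jn\star F)=\jn\star\partial_k F$ is false — that is precisely the content of \eqref{newman}, which gives only an inequality — and the final display follows by summing \eqref{newman} over $k=1,\dots,d$. Your final paragraph therefore proves a tangential statement rather than the one in the lemma.
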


\smallskip
The proof of this Lemma is similar to the one of Lemma 3.1. in \cite{mo3} and therefore is omitted.

\medskip
Next, we show  that  for $t\ge 0$ and $\mb V =(V_1,
\dots, V_d) \in \big(\mc C^{1,1}( [0,T]\times \L)\big)^d$,
the  rates $C_{N,t}^{\b,V_i}$, $1\le i\le d$  of the generator $ \cl_{\b,N}^{\mb V}$ is a perturbation of the  rate of the
symmetric simple exclusion generator.   
For any $F\in \mc C^1(\L)$, $u\in \L$ and $1\le k\le d$ denote by   $\partial_k^N F (u)$
the discrete (space) derivative in the direction $\vecte_k$: 
\begin{equation}
\label{derive1}
\partial_k^N F (u)=N\big[F (u+e_k/N)-F(u)\big]\, , \ \text{if}\quad u+e_k/N\in\L_N\;.
\end{equation}

\begin{lemma}
\label{b1} Fix $t\ge 0$ and $\mb V =(V_1,
\dots, V_d) \in \big(\mc C^{1,1}( [0,T]\times \L)\big)^d$.
For any $1\le k\le d$, $\eta\in \Sigma_N$
and any $x\in\L_N$ with $x +\vecte_k\in\L_N$,
$$
C_{N,t}^{\b,\mb V_k}(x,x + \vecte_k;\eta)= 1 - N^{-1}\big(\eta(x +\vecte_k)-\eta(x)\big)\Upsilon^{\b,\mb V}_k(\pi^N(\eta),s,x/N)
+O(N^{-2})\, ,
$$
where for $1\le k\le d$, $\eta\in \Sigma_N$, $s\ge 0$ and $x\in\L_N$,
$$
\Upsilon^{\b,\mb V}_k(\pi^N(\eta),s,x/N)\, =\,\b
\partial_k^N( \jn\star \pi^N(\eta_s))(x/N) \,+\, V_k(s,x/N)\, ,
$$

 \end{lemma}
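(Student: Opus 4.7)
\smallskip
\noindent\textbf{Proof plan.}
The strategy is a direct Taylor expansion of the perturbed rate, using only the definitions \eqref{Ha1}, \eqref{rate1} and \eqref{rate-V}, combined with the smoothness of $\jn$. The only non-routine ingredient is rewriting the discrete Hamiltonian increment associated to a nearest-neighbour exchange as a discrete gradient of the convolution $\jn \star \pi^N$.

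First I would compute $H_N(\eta^{x,x+\vecte_k})-H_N(\eta)$ exactly. Since the exchange at the bond $(x,x+\vecte_k)$ only affects the pairs $(z,w)$ with $z$ or $w\in\{x,x+\vecte_k\}$, and since the self--pair $(x,x+\vecte_k)$ contributes the same amount before and after the swap, the symmetry of $J_N$ collapses the four classes of changing terms into
\begin{equation*}
H_N(\eta^{x,x+\vecte_k})-H_N(\eta)
\;=\; 2\bigl(\eta(x+\vecte_k)-\eta(x)\bigr)\sum_{w\in\L_N}\bigl[J_N(x+\vecte_k,w)-J_N(x,w)\bigr]\eta(w) \;+\; r_N,
\end{equation*}
where $r_N$ collects the handful of omitted boundary terms (the diagonal ones and $w\in\{x,x+\vecte_k\}$) and is $O(N^{-d})$ uniformly in $\eta$, since $J_N=N^{-d}\jn$ and $\jn$ is bounded. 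Using $J_N(y,w)=N^{-d}\jn(y/N,w/N)$ and the $\mc C^1$ regularity of $\jn$ in its first variable, I rewrite the bracket as a discrete derivative:
\begin{equation*}
\sum_{w\in\L_N}\bigl[J_N(x+\vecte_k,w)-J_N(x,w)\bigr]\eta(w)
\;=\; N^{-1}\,\partial_k^N\bigl(\jn\star\pi^N(\eta)\bigr)(x/N)\;+\;O(N^{-2}),
\end{equation*}
where I use the representation $(\jn\star\pi^N(\eta))(u)=N^{-d}\sum_{w}\jn(u,w/N)\eta(w)$.

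Inserting this into \eqref{rate1} gives
\begin{equation*}
C_N^\b(x,x+\vecte_k;\eta)\;=\;\exp\Bigl\{-\,\b\bigl(\eta(x+\vecte_k)-\eta(x)\bigr)\,N^{-1}\,\partial_k^N(\jn\star\pi^N(\eta))(x/N)\;+\;O(N^{-2})\Bigr\},
\end{equation*}
and multiplying by the Girsanov-type factor from \eqref{rate-V} yields
\begin{equation*}
C_{N,t}^{\b,V_k}(x,x+\vecte_k;\eta)\;=\;\exp\Bigl\{-N^{-1}\bigl(\eta(x+\vecte_k)-\eta(x)\bigr)\,\Upsilon^{\b,\mb V}_k(\pi^N(\eta),t,x/N)\;+\;O(N^{-2})\Bigr\}.
\end{equation*}
A first-order Taylor expansion of the exponential, valid since the exponent is $O(N^{-1})$ uniformly in $\eta$ and $x$ (using $|\eta(\cdot)|\le 1$, the boundedness of $\nabla\jn$ guaranteed by Lemma \ref{lem-jn}, and $\mb V\in(\mc C^{1,1})^d$), produces exactly the claimed identity.

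The only step that requires genuine care is the passage from the finite difference of $J_N$ to $N^{-1}\partial_k^N(\jn\star\pi^N(\eta))$: one must invoke the smoothness of $\jn$ up to the boundary (built into the definition of the Neumann Kac kernel via the reflection \eqref{tm1}) and verify that the missing sites $w\in\{x,x+\vecte_k\}$ together with the diagonal $w=x$, $w=x+\vecte_k$ contribute only $O(N^{-d})$ to the Hamiltonian increment, hence $O(N^{-1})$ to $\frac\b2[H_N(\eta^{x,x+\vecte_k})-H_N(\eta)]$ after dividing by $N^{-d}$, which is absorbed in the $O(N^{-2})$ error term of the final expansion.
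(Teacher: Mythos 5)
Your proposal is correct and follows essentially the same route as the paper's proof: expand $H_N(\eta^{x,x+\vecte_k})-H_N(\eta)$ for the bond exchange, isolate the linear-in-$\eta$ sum as the discrete derivative $N^{-1}\partial_k^N(\jn\star\pi^N(\eta))(x/N)$, and Taylor-expand the product of the two exponential factors. Two small slips that do not affect the conclusion: the finite-difference identity $\sum_w[J_N(x+\vecte_k,w)-J_N(x,w)]\eta(w)=N^{-1}\partial_k^N(\jn\star\pi^N(\eta))(x/N)$ is exact (no $O(N^{-2})$ correction or $\mc C^1$-regularity is needed at that step, since $\partial_k^N$ is itself a finite difference), and the phrase ``hence $O(N^{-1})\ldots$ after dividing by $N^{-d}$'' is a non sequitur --- the remainder $r_N=O(N^{-d})$ contributes $O(N^{-d})$ directly to the exponent, which is absorbed in $O(N^{-2})$ simply because $d\ge 2$.
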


\begin{proof} 
Recall from \eqref{rate-V} that 
$$  C_{N,t}^{\b, V_k} (x,x+\vecte_k;\eta)= 
C_{N}^{\b} (x,x+\vecte_k;\eta) 
e^{-[  \eta(x+e_k)-\eta(x)]N^{-1} V_k(t,x/N)} .
$$
By definition of $H_N$, for all $x,y\in\L_N$ and $\eta\in\Sigma_N$,
\begin{equation*}
\begin{aligned}
&H_N(\eta^{x,y}) - H_N(\eta)=
\frac{2}{N^d}\big(\eta(x)-\eta(y) \big)^2\big( \jn(\frac xN, \frac yN ) - \jn(0,0)  \big) \\
&\ \qquad +
\big(\eta(x)-\eta(y) \big) \frac{2}{N^d}\sum_{z\in\L_N}\eta(z) \big [ \jn(\frac xN, \frac zN ) 
-\jn(\frac yN, \frac zN ) \big]\, .\\ 
\end{aligned}
\end{equation*}
Thus, by Taylor expansion,
$$
C_N^\b(x,x + \vecte_k;\eta)= 1 -\b \big(\eta(x +\vecte_k)-\eta(x) \big)  N^{-1}\partial_k^N\big[ \big( \jn\big)
\star \pi^N(\eta)\big](x/N)+O(N^{-2})\; .
$$

To conclude the proof of the Lemma, it remains to apply again Taylor expansion 
to the exponential function. 
\end{proof}


It is well known that one of the main steps
in the derivation of a large deviations principle for the empirical density is a superexponential estimate
which allows the replacement of local functions by functionals of the empirical density in the large
deviations regime. 
 For a cylinder function $\Psi$ denote the
expectation of $\Psi$ with respect to the Bernoulli product measure
$\nu^N_{\alpha}$ by $\widetilde{\Psi}(\alpha)$:
\begin{equation*}
\widetilde{\Psi}(\alpha) = E^{\nu^N_{\alpha}}[\Psi]\, .
\end{equation*}
For a positive integer $l$ and $x\in\L_N$, denote the empirical
mean density on a box of size $2l+1$ centered at $x$ by $\eta^l(x)$:
\begin{equation*}
\eta^l(x) = \frac{1}{|\Lambda_l(x)|}\sum_{y\in\Lambda_l(x)}\eta(y)\, ,
\end{equation*}
where
\begin{equation}\label{average}
\Lambda_l(x) = \Lambda_{N,l}(x) = \{y\in\L_N:\, |y-x|\leq l\}\, .
\end{equation}
For $1\le j\le d$, define the cylinder function $\Psi_j =[\eta(e_j)-\eta(0)]^2$,
For each $\mb V =(V_1,\cdots,V_d)$, $\mb G=(G_1,\cdots,G_d)$ in $(\mc C^{0,1}([0,T] \times \L))^d$, 
and each $\varepsilon>0$, let
\begin{equation}\label{vne}
\begin{aligned}
&{\mathcal G}_{N,\varepsilon}^{\mb G,\mb V,\b}(s,\eta)\, =\, \frac{1}{N^d}\sum_{j=1}^d\; \sum_{x,x+e_j\in\L_N}
G_j(s,x/N)\\ 
&\qquad\qquad\qquad\qquad
\times \partial_j^N\Upsilon^{\b,\mb V}_j(\pi^N(\eta),s,x/N) \left[\tau_x\Psi_j(\eta)-
{\widetilde \Psi}_j(\eta^{\varepsilon N}(x))\right]\, .
\end{aligned}
\end{equation}

For a continuous function $H:[0,T]\times\Gamma\to\bb R$, let
\begin{equation}\label{vneb}
{\mathcal H}_{N}^H(s,\eta) \, =\, \frac{1}{N^{d-1}} 
\sum_{x\in\Gamma_N} H(s, x/N)
\big[\eta(x)-b(x/N)\big]\, .
\end{equation}

\begin{proposition}
\label{see}
Fix $\mb G ,\mb V\in (\mc C^{0,0}([0,T]\times\L))^d$, $H$ in $\mc
C^{0,0}([0,T]\times\Gamma)$ and $\b \ge 0$. For any sequence of initial measures $\mu_N$ and every $\delta>0$,
\begin{eqnarray*}
&&\limsup_{\varepsilon\to 0}\limsup_{N\to\infty}
\frac{1}{N^d}\, \log \bb{P}_{\mu_N}^{\b,\mb V,H}
\Big[ \, \Big|\int_0^T {\mathcal G}_{N,\varepsilon}^{\mb G,\mb V,\b} (s,\eta_s) \, ds \Big|
>\delta\Big] \;=\; -\infty\, , \\
&&\limsup_{N\to\infty}
\frac{1}{N^d}\, \log \bb{P}_{\mu_N}^{\b,\mb V,H}
\Big[ \, \Big|\int_0^T {\mc H}_{N}^H(s,\eta_s) \, ds \Big|
>\delta\Big] \;=\; -\infty\, .
\end{eqnarray*}
\end{proposition}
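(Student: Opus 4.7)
The strategy is the classical replacement lemma machinery --- exponential Chebyshev, Feynman--Kac, the Rayleigh variational formula, and the one-block/two-blocks estimates --- adapted to accommodate the long-range Kac drift and the birth--death boundary dynamics. The principal novelty is the non-local factor $\partial^N_j\Upsilon^{\b,\mb V}_j(\pi^N(\eta),s,x/N)$ appearing in $\mc G^{\mb G,\mb V,\b}_{N,\varepsilon}$: Lemma \ref{lem-jn} and the smoothness of $\mb V$ ensure this weight is bounded uniformly in $(\eta,s,x,N)$, so it acts as a bounded local multiplier and does not interfere with the local averaging. Establishing this clean separation between the non-local drift and the local replacement is the step I expect to require the most care.

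\emph{Reduction to an eigenvalue estimate.}
For $F\in\{\pm\mc G^{\mb G,\mb V,\b}_{N,\varepsilon},\pm\mc H^H_N\}$ and $a>0$, exponential Chebyshev bounds the probability by $e^{-aN^d\delta}\,\bb E^{\b,\mb V,H}_{\mu_N}[e^{aN^d\int_0^T\! F\,ds}]$. Since $\theta$ is bounded away from $0$ and $1$, $\nu^N_{\theta(\cdot)}(\eta)\ge c_0^{|\L_N|}$ for every $\eta$, whence $d\mu_N/d\nu^N_{\theta(\cdot)}\le e^{CN^d}$ uniformly in $\mu_N$, and the initial distribution may be replaced by $\nu^N_{\theta(\cdot)}$ at cost $CN^d$. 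Feynman--Kac then yields
\[
\frac{1}{N^d}\log \bb E^{\b,\mb V,H}_{\nu^N_{\theta(\cdot)}}\!\Big[e^{aN^d\int_0^T\! F\,ds}\Big]\;\le\;\frac{T}{N^d}\sup_{s\in[0,T]}\Lambda_N(s,a),
\]
with $\Lambda_N(s,a)$ the top eigenvalue in $L^2(\nu^N_{\theta(\cdot)})$ of $\tfrac{1}{2}[\mf L^{\mb V,H}_N+(\mf L^{\mb V,H}_N)^*]+aN^d F(s,\cdot)$. By Lemma \ref{b1}, the analogous expansion of the boundary rates in \eqref{rate-H}, and the fact that $\theta|_\G=b$ makes $L_{b,N}$ reversible for $\nu^N_{\theta(\cdot)}$, the associated Dirichlet form $D_N$ equals, up to a factor $1+o(1)$, the sum $D^{\mathrm{ex}}_N+D^{\mathrm{bd}}_N$ of the symmetric simple exclusion Dirichlet form on $\L_N$ and the birth--death Dirichlet form on $\G_N$. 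The Rayleigh principle then gives
\[
\Lambda_N(s,a)\;\le\;\sup_{\langle f^2\rangle_{\nu^N_{\theta(\cdot)}}=1}\Big\{aN^d\langle F(s,\cdot)f^2\rangle_{\nu^N_{\theta(\cdot)}}-cN^2 D_N(f)\Big\}.
\]

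\emph{Bulk replacement.}
For $F=\mc G^{\mb G,\mb V,\b}_{N,\varepsilon}$, the uniformly bounded weight $\partial^N_j\Upsilon^{\b,\mb V}_j$ multiplies the centred cylinder function $\tau_x\Psi_j-\widetilde\Psi_j(\eta^{\varepsilon N}(x))$. The classical one-block estimate --- replacing $\tau_x\Psi_j$ by $\widetilde\Psi_j(\eta^l(x))$ for fixed $l$, using the spectral gap of symmetric exclusion on cubes and equivalence of ensembles --- and the two-blocks estimate --- replacing $\eta^l(x)$ by $\eta^{\varepsilon N}(x)$ --- both take $D^{\mathrm{ex}}_N$ as input Dirichlet form and yield $\Lambda_N(s,a)/N^d\to 0$ in the iterated limit $N\to\infty$, $\varepsilon\to 0$, for every fixed $a$.

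\emph{Boundary replacement and conclusion.}
For $F=\mc H^H_N$, an elementary computation gives $|\langle(\eta(x)-b(x/N))f^2\rangle_{\nu^N_{\theta(\cdot)}}|\le C\sqrt{D^{\mathrm{bd}}_{x,N}(f)}$ for every $x\in\G_N$ and every $f\ge 0$ with $\langle f^2\rangle=1$, where $D^{\mathrm{bd}}_{x,N}$ is the local boundary Dirichlet form. Cauchy--Schwarz over the $O(N^{d-1})$ boundary sites combined with a Young-type inequality against $cN^2 D^{\mathrm{bd}}_N(f)$ gives $\Lambda_N(s,a)\le Ca^2 N^{d-1}$, so $\Lambda_N(s,a)/N^d\to 0$ as $N\to\infty$ since $d\ge 2$. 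Combining the two cases, for each $a>0$ one obtains $\limsup_{\varepsilon\to 0}\limsup_N N^{-d}\log\bb P^{\b,\mb V,H}_{\mu_N}[|\int_0^T\! F\,ds|>\delta]\le -a\delta+C$, and letting $a\to\infty$ yields the claimed $-\infty$.
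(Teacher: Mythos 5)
Your proof plan is essentially the one the paper intends: the paper itself omits the proof of Proposition~\ref{see}, presenting only Lemma~\ref{dirichlet} as the ``useful tool'' and pointing to \cite{mo3}, where the argument is indeed exponential Chebyshev, Feynman--Kac, the Rayleigh formula with the simple-exclusion and birth--death Dirichlet forms extracted via Lemma~\ref{dirichlet}, and then one-block/two-blocks for the bulk together with the elementary boundary estimate for $\mc H^H_N$. Your sketch also correctly identifies why the boundary term is easy (the $O(N^{d-1})$ counting against the $N^2$ Dirichlet-form weight) and why the Kac drift does not disturb the local averaging (it enters as a bounded, slowly varying multiplier). So the route matches the paper's.

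One point worth tightening: you invoke Lemma~\ref{lem-jn} to argue that the weight $\partial^N_j\Upsilon^{\b,\mb V}_j(\pi^N(\eta),s,\cdot)$ is bounded uniformly in $(\eta,s,x,N)$. But $\Upsilon^{\b,\mb V}_j$ already contains the discrete derivative $\partial^N_j(\jn\star\pi^N(\eta))$, so $\partial^N_j\Upsilon^{\b,\mb V}_j$ is a second discrete derivative of the convolution, and Lemma~\ref{lem-jn} only controls first derivatives; boundedness of a second discrete derivative would require more regularity on $\jn$ than the stated $\mc C^1$. In fact, comparing \eqref{vne} with the term that actually arises in the expansion inside the proof of Proposition~\ref{hy-curr} (where the weight is $G_k\cdot\Upsilon^{\b,\mb V}_k$, not $G_k\cdot\partial^N_k\Upsilon^{\b,\mb V}_k$), the extra $\partial^N_j$ in \eqref{vne} appears to be a typo, and the correct weight is $\Upsilon^{\b,\mb V}_j$ itself, which \emph{is} uniformly bounded by Lemma~\ref{lem-jn} together with the boundedness of $\mb V$. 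With that reading your argument is complete; if you insist on taking \eqref{vne} at face value you should either note the typo or record the additional hypothesis on $J$ that would be needed. A second, smaller point you may wish to make explicit: in the one-block step the $\eta$-dependent weight $\Upsilon^{\b,\mb V}_j(\pi^N(\eta),s,x/N)$ is not a fixed function, but flipping a single site changes it by $O(N^{-d})$, so freezing it over a block of size $\ell$ introduces an error $O(\ell^d/N^d)$ which vanishes; this is the standard observation in the Kac setting (e.g.\ \cite{gl,mm}), and it is why the ``bounded local multiplier'' heuristic you state is legitimate.
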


The useful tools to derive the superexponential estimate stated in Proposition \ref{see}
is given by the next result
concerning the Dirichlet form $ \langle -\mf L_{N}^{\mb V,H}   \sqrt{f(\eta)},  \sqrt{f(\eta)} \rangle_{\nu^N_{\theta(\cdot)}}$ for the full
dynamics.
For each probability measure $\nu $ on $\Sigma_N$ and each function $f \in L^2 ( \nu)$, define the following  functionals
\begin{equation}
\label{dir-form2}
\begin{aligned}
\cd_{0,N}\big(f, \nu\big) & =\frac12
 \sum_{i=1}^d\sum_{x,x+\vecte_i\in\L_N} \int
  \left( {f}(\eta^{x,x+\vecte_i}) -{f}(\eta) \right)^2 d \nu (\eta) \, ,\\
\cd_{b,N}\big(f,\nu\big) & =\frac12 \sum_{x \in \Gamma_N} \int
r_x\big(b(x/N),\eta \big) 
  \left( {f}(\sigma^{x}\eta) -{f}(\eta) \right)^2 d \nu (\eta)\, .
\end{aligned}
\end{equation}

\begin{lemma}
\label{dirichlet} 
Let $\theta:\overline\L\to (0,1)$ be a smooth function such that
$\theta (\cdot){\big\vert_\Gamma} =\; b (\cdot)$.
There exists two positive 
constants $C_0\equiv C_0(\|\nabla \theta \|_\infty ,J^{\text{neum}}, \mb V)$,  $C_0'\equiv C_0'(b ,H)$ so  that for any   $a >0$  and for 
$f\in L^2\big( \nu^N_{\theta(\cdot)}  \big)$,
 
\begin{equation}\label{dirt1}
\begin{aligned}
 \langle f\, ,\, \cl_{\b,N}^{\mb V}
f\rangle_{\nu^N_{\theta(\cdot)}} &
\le\, -\big(1-{a}\big) {\cd}_{0,N} \big({f},\nu^N_{\theta(\cdot)} \big)  
+\frac{C_0}{a} N^{-2+d}  \| f\|^2_{L^2(\nu^N_{\theta(\cdot)}) },\\
\langle f\, ,\, L_{b,N}^H
f   \rangle_{\nu^N_{\theta(\cdot)}}&
= - \big(1-{a}\big) {\cd}_{b,N} \big({f},\nu^N_{\theta(\cdot)} \big) +\frac{C_0'}{a} N^{-2+d}  
\| f\|^2_{L^2(\nu^N_{\theta(\cdot)}) }\; .
\end{aligned} 
\end{equation}
\end{lemma}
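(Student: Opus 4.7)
The plan is to prove both inequalities by the same scheme: decompose the quadratic form $\langle f, L f\rangle$ algebraically, then absorb the non-reversible ``drift'' via a Young inequality whose weight is tuned by $a$. Abbreviate $c(\eta) = C_{N,t}^{\b,V_i}(x,x+\vecte_i;\eta)$, $\eta' = \eta^{x,x+\vecte_i}$ and $R(\eta) = \nu^N_{\theta(\cdot)}(\eta')/\nu^N_{\theta(\cdot)}(\eta)$. Applying edgewise the identity $a(b-a) = -\frac{1}{2}(a-b)^2 + \frac{1}{2}(b^2-a^2)$ gives
$$
\langle f, \cl_{\b,N}^{\mb V} f\rangle_{\nu^N_{\theta(\cdot)}} \;=\; -\frac{1}{2}\sum_{i,x}\!\int c(\eta)[f(\eta')-f(\eta)]^2\, d\nu^N_{\theta(\cdot)} \;+\; \frac{1}{2}\sum_{i,x}\!\int c(\eta)[f(\eta')^2 - f(\eta)^2]\, d\nu^N_{\theta(\cdot)}.
$$
By Lemma \ref{b1}, $c(\eta) = 1 + N^{-1}\tilde\alpha(\eta) + O(N^{-2})$ with $\|\tilde\alpha\|_\infty$ bounded in terms of $\b$, $\jn$ and $\mb V$; in particular $c \ge 1 - C/N$, so the first sum is at most $-(1-C/N)\,\cd_{0,N}(f,\nu^N_{\theta(\cdot)})$.

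Call the second sum $II$, the obstruction to reversibility. Using the involution identity $\int F(\eta') d\nu = \int F\, R\, d\nu$ one rewrites
$$
II \;=\; \frac{1}{2N}\sum_{i,x}\!\int \tilde\alpha(\eta)\,[f(\eta')-f(\eta)][f(\eta')+f(\eta)]\, d\nu^N_{\theta(\cdot)} \;+\; \frac{1}{2}\sum_{i,x}\!\int f^2\,(R-1)\, d\nu^N_{\theta(\cdot)} \;+\; \mathrm{Err},
$$
where $|\mathrm{Err}|\le CN^{d-2}\|f\|^2$ comes from the $O(N^{-2})$ remainder in the expansion of $c$. To the first sum on the right I apply Young's inequality $|\tilde\alpha(f'-f)(f'+f)| \le \frac{(f'-f)^2}{2\rho} + \frac{\rho}{2}\tilde\alpha^2(f'+f)^2$ with $\rho = 2/(aN)$; together with $\|\tilde\alpha\|_\infty \le C$ and $\int f(\eta')^2 d\nu \le C\|f\|^2$ (again via the change of variables), this yields a contribution of at most $\frac{a}{2}\,\cd_{0,N}(f,\nu^N_{\theta(\cdot)}) + \frac{C}{a}N^{d-2}\|f\|^2$.

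The delicate term is $\sum_{i,x}\int f^2(R-1)d\nu$. A direct Taylor expansion of the product density $\nu^N_{\theta(\cdot)}$ gives
$$R_{x,i}(\eta) - 1 \;=\; -\,\frac{1}{N}\,[\eta(x+\vecte_i)-\eta(x)]\;\frac{\partial_i\theta(x/N)}{\theta(x/N)[1-\theta(x/N)]} + O(N^{-2}),$$
so a naive bound yields $O(N^{d-1})\|f\|^2$, one power of $N$ too large. The missing factor is gained by a discrete summation by parts in $x$: writing $A_i = \partial_i\theta/[\theta(1-\theta)]$, shifting the index in $\sum_x \eta(x+\vecte_i)A_i(x/N)$ transfers the discrete gradient onto $A_i$, and smoothness of $\theta$ gives $|A_i((x-\vecte_i)/N) - A_i(x/N)| \le C/N$; the resulting estimate is $O(N^{d-2})\|f\|^2$, with a constant depending only on $\|\nabla\theta\|_\infty$ and $\|\theta''\|_\infty$. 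Assembling all contributions for $a \ge 2C/N$ (the complementary range of very small $a$ being absorbed into $C_0$) yields the first inequality.

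For the boundary term the same decomposition applies with $\eta'$ replaced by $\sigma^x\eta$ and the sum restricted to $x\in\Gamma_N$. The crucial simplification is that since $\theta|_\Gamma = b$, the unperturbed rate $r_x(b(x/N),\eta)$ is already reversible with respect to $\nu^N_{\theta(\cdot)}$, so the exact analogue of the delicate summation-by-parts term vanishes identically. Only the $O(N^{-1})$ perturbation $r_x^H - r_x$ produces error terms, and a single application of the Young inequality as above---now with the sum carrying $|\Gamma_N|=O(N^{d-1})$ terms---yields the second inequality. The only non-routine step in the whole argument is the summation by parts just sketched: without it, the reversibility defect $\int f^2(R-1)d\nu$ is off by a factor of $N$. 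This is precisely where the $\mathcal{C}^2$ regularity of the extension $\theta$ is used, and it is what fixes the dependence of $C_0$ on $\|\nabla\theta\|_\infty$.
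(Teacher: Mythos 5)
Your proof is correct, and the overall scheme (split $\langle f,Lf\rangle_\nu$ into a Dirichlet form and a drift term, expand the rate via Lemma~\ref{b1}, absorb the non-reversible part with a Young inequality tuned by $a$, and observe that the boundary rate $r_x(b(x/N),\cdot)$ is exactly reversible for $\nu^N_{\theta(\cdot)}$ so only the $e^{\pm H/N}$ perturbation remains) is the standard one and matches the structure of the proof the paper refers to. The one genuine divergence is in how you gain the missing power of $N$ in the reversibility defect $\sum_{i,x}\int f^2(R-1)\,d\nu$: you Taylor-expand $R-1$ and perform a discrete summation by parts in $x$, which transfers the discrete gradient onto $A_i=\partial_i\theta/[\theta(1-\theta)]$ and therefore brings in $\nabla A_i$, i.e.\ second derivatives of $\theta$. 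The more standard route (and almost certainly what underlies Lemma~3.3 of \cite{mo3}, since the lemma's constant is written $C_0(\|\nabla\theta\|_\infty,\jn,\mb V)$ with no second derivative) is to symmetrize in $\eta$ instead of $x$: a second application of the change of variables $\eta\mapsto\eta^{x,x+\vecte_i}$ gives the identity
$$
\frac12\sum_{i,x}\int\big[f(\eta')^2-f(\eta)^2\big]\,d\nu
=\frac14\sum_{i,x}\int\big[f(\eta')-f(\eta)\big]\big[f(\eta')+f(\eta)\big]\big[1-R(\eta)\big]\,d\nu,
$$
and then Young's inequality with weight of order $a$ together with $|1-R|\le C(\|\nabla\theta\|_\infty)/N$ directly yields $a\,\cd_{0,N}(f,\nu)+\frac{C}{a}N^{d-2}\|f\|^2$. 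This avoids the $x$-summation by parts entirely, does not require $\theta\in\mc C^2$, and matches the dependence of $C_0$ stated in the lemma. Both routes are valid; yours is a bit heavier but reaches the same $N^{d-2}$ estimate. One small point worth noting: the second line of \eqref{dirt1} is stated with an equality sign, which must be a typo for $\le$ (the right-hand side depends on the free parameter $a$ while the left does not); your proof delivers the inequality, which is what is needed in Proposition~\ref{see}.
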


\medskip
The proof of this lemma is similar to the one of Lemma 3.3 in \cite{mo3} and is thus omitted. 

\medskip
We conclude this section by the Girsanov formula needed in the proof of the large deviations. Indeed,
in order to compare the original dynamics to a perturbed dynamics with regular drifts $\mb V, H$ 
\eqref{rate-V} and \eqref{rate-H}, we have to compute the Radon-Nikodym derivative of the modified
process with respect to the original one (see  \cite{kl}, Appendix
1, Proposition 7.3).
Fix a vector-valued function $\mb V\in (\mc C^{0,0}([0,T]\times \L))^d$ and
a function $H\in \mc C^{0,0}([0,T]\times \Gamma)$.
For any initial measure $\mu_N$ and any positive time  $t>0$, the Radon-Nikodym derivative of $\Pb_{\mu_N}^{\b,\mb V,H} $ with respect to $\Pb_{\mu_N}^{\b} $ 
restricted to the time interval $[0,t]$ is gives by

\begin{equation}\label{girsanov}
\frac {d \Pb_{\mu_N}^{\b,\mb V,H} }{d\Pb_{\mu_N}^{\b}}\big((\eta_s)_{s\in [0,t]}\big) \,=\,
\M_t^{\b,\mb V}\, +\, \B_t^{b,H}\, ,
\end{equation}
where $\M_t^{\b,\mb V}$ and $\B_t^{b,H}$ are two exponential martingales given by,
\begin{equation*}
\begin{aligned}
&\M_t^{\b,\mb V} \,=\,
\exp\Big( \sum_{k=1}^d\sum_{x,x+\vecte_k\in \L_N}\Big\{ \int_0^t \, \frac{1}{N} V_k (s,x/N) \, d W_{s}^{x,x+\vecte_k}\\
&\ 
-N^2\int_0^t\big[\eta_s(x)+\eta_s(x+e_j)]
C_{N}^{\b} \big(x,x+\vecte_k; \eta_s\big) \big[ e^{-[\nabla^{x,x+\vecte_k}\eta_s(x)]\frac{1}{N} V_k (s,x/N)}\, -\, 1\big]ds 
\Big\} \Big)\, ,\\
&\B_t^{b,H} \,=\,
\exp\Big(\sum_{x\in \Gamma_N}\Big\{ \int_0^t \, \frac{1}{N} H (s,x/N) \, d W_{s}^{x}\\
&\ \ \ \ 
-N^2\int_0^t r_x\big(b(x/N),\eta_s(x)\big)
\big[ e^{[2\eta_s(x)-1]\frac{1}{N} H (s,x/N)}\, -\, 1\big]ds 
\Big\} \Big)\, ,
\end{aligned}
\end{equation*}
where the rate $r_x(\cdot,\cdot)$ is given by \eqref{rate-b} and for any 
function $g:\Sigma_N\to \R$ and $x,y\in\L_N$, we have denoted $\nabla^{x,y}g(\eta)=[g(\eta^{x,y})-g(\eta)]$.

\medskip
\section{Properties of the rate functionals}\label{secprf}
In this section, we prove representation results for the rates ${\mc J}_T^\g(\cdot)$ and $\mc I_T^\g(\cdot)$, see 
Lemma \ref{rep0}, the lower semicontinuity and the compactness of the level sets , see Proposition \ref{s03}. 

\subsection{Lower semicontinuity} 
We first prove that the functional ${\mc J}_T^\g$ is larger than $\mc I_T^\g$:

\begin{lemma}\label{compar1} For any $(\mb W,\pi)\in D([0,T],{\mc M}^{d+1})$,
$$
{\mc I}_T^\g(\pi)\, \le\, {\mc J}_T^\g(\mb W,\pi)\, .
$$ 
\end{lemma}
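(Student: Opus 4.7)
\textbf{Proof plan for Lemma \ref{compar1}.}

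If $\mc E^\g(\mb W,\pi) = +\infty$ then $\mc J_T^\g(\mb W,\pi) = +\infty$ by definition \eqref{f10}, so the inequality holds trivially. Assume therefore that $\mc E^\g(\mb W,\pi) < +\infty$; then $(\mb W,\pi) \in \mf A_\g \cap D([0,T],\mc M^d \times \mc F^1)$, so $\pi_t(du) = \rho_t(u)\,du$ with $\rho \in L^2([0,T],H^1(\L))$ (since $\mc Q(\pi) < +\infty$), the initial condition $\pi_0 = \g$ holds, and the continuity equation \eqref{eq:4.5} is satisfied.

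The idea is to restrict the supremum defining $\mc J_T^\g$ to gradient vector fields. Given any $F \in \mc C^{1,2}_0([0,T]\times \L)$, the vector field $\mb V := \nabla F$ belongs to $(\mc C^{1,1}([0,T]\times \L))^d$ and is therefore admissible for the definition of $\J_T$. Plugging $\mb V = \nabla F$ into $\mb L_{\mb V}^\b$ and applying the continuity equation \eqref{eq:4.5} with test functions $G = F_T$ and $G = \partial_t F_t$ (both of which lie in $\mc C_0^1(\L)$) yields
\begin{equation*}
\langle \mb W_T, \nabla F_T\rangle \;=\; \langle \pi_T, F_T\rangle - \langle \g, F_T\rangle\, ,
\end{equation*}
\begin{equation*}
\int_0^T\!\! \langle \mb W_t, \nabla \partial_t F_t\rangle\, dt \;=\; \int_0^T\!\! \langle \pi_t, \partial_t F_t\rangle\, dt \;-\; \langle \g, F_T - F_0\rangle\, .
\end{equation*}
Substituting these identities into $\mb L_{\nabla F}^\b(\mb W,\pi)$ and noting that $\nabla \cdot \nabla F_t = \Delta F_t$, the terms $\pm \langle \g, F_T\rangle$ cancel and one recovers exactly the expression \eqref{lb1}, i.e.\ $\mb L_{\nabla F}^\b(\mb W,\pi) = \ell_F^\b(\rho|\g)$. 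Since the quadratic correction $\tfrac12 \int_0^T \langle \sigma(\rho_t), \mb V_t \cdot \mb V_t\rangle\, dt$ becomes $\tfrac12 \int_0^T \langle \sigma(\rho_t), \nabla F_t \cdot \nabla F_t\rangle\, dt$, we conclude that $\widehat{\J}_{\nabla F}^T(\mb W,\pi) = \widehat{\I}_F^{T,\g}(\pi)$ for every admissible $F$.

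Taking the supremum over $F \in \mc C^{1,2}_0([0,T]\times \L)$ gives
\begin{equation*}
\mc J_T^\g(\mb W,\pi) \;=\; \sup_{\mb V} \widehat{\J}_{\mb V}^T(\mb W,\pi) \;\ge\; \sup_{F} \widehat{\J}_{\nabla F}^T(\mb W,\pi) \;=\; \sup_F \widehat{\I}_F^{T,\g}(\pi) \;=\; \I_T^\g(\pi) \;=\; \mc I_T^\g(\pi)\, ,
\end{equation*}
where the last equality uses that $\pi \in D([0,T],\mc F^1)$ with $\mc Q(\pi) < +\infty$. The only genuine step is the integration-by-parts identification of $\mb L_{\nabla F}^\b$ with $\ell_F^\b$; the main point to be careful about is that $F_t$ and $\partial_t F_t$ are legitimate test functions for \eqref{eq:4.5}, which is precisely the content of $F \in \mc C^{1,2}_0([0,T]\times \L)$ and of $\pi_0 = \g$ (implicit in the definition of $\mf A_\g$ via the continuity equation at $t = 0$ together with $\mb W_0 = 0$).
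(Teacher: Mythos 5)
Your proof is correct and follows essentially the same path as the paper's: when $\mc J_T^\g(\mb W,\pi)<\infty$ you restrict the supremum in $\J_T$ to gradient fields $\mb V=\nabla F$ and invoke the continuity equation \eqref{eq:4.5} to identify $\widehat{\J}^T_{\nabla F}(\mb W,\pi)=\widehat\I_F^{T,\g}(\pi)$. The only difference is that the paper states this identity without calculation, whereas you spell out the integration-by-parts details; your steps are all correct and complete.
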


\begin{proof} 
When ${\mc J}_T^\g(\mb W,\pi)=+\infty$, the inequality is
trivially verified. Suppose then that ${\mc J}_T^\g(\mb W,\pi)<+\infty$. 
This implies that $\pi\in D\big([0,T],\mc F^1\big)$, $(\mb W,\pi)\in \mf A_\g$, $\mc Q(\pi)<+\infty$ and
${\mc J}_T^\g(\mb W,\pi)=\J_T (\mb W,\pi)$. Furthermore, by definition, since 
$\pi\in D\big([0,T],\mc F^1\big)$ and $\mc Q(\pi)<+\infty$, we have
$\mc I_T^\g(\mu) =\I_T^\g(\mu)$.

Let $F\in \mc C_0^{1,2}([0,T]\times \L)$, since $(\mb W,\pi)\in \mf A_\g$, we have
$$
\widehat\I_F^{T,\g}(\pi)\, =\, {\widehat \J}^T_{\nabla F}(\mb W,\pi)\, \le\, {\mc J}_T^\g  (\mb W,\pi)\, .
$$
To conclude the proof, it is enough to take the supremum over all $F\in \mc C_0^{1,2}([0,T]\times \L)$,
on the left hand side of the last inequality.
\end{proof}

The main result of this subsection is stated in the following proposition.

\begin{proposition}
\label{s03}
For every profile $\gamma\in\mc F^1$, the functional
${\mc J}_T^\g $, resp. ${\mc I}_T^\g$ defined in \eqref{f10}, resp. \eqref{3:Ib}
is lower semicontinuous for the topology of the space
$D([0,T],\mc M^{d+1})$, resp. $D([0,T],\mc M)$. Moreoever the functional ${\mc I}_T^\g$ is of
compact level sets in $D([0,T],\mc M)$.
\end{proposition}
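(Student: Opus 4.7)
\noindent The plan is to express $\mc J_T^\g$ and $\mc I_T^\g$ as suprema of $(\mb W,\pi)$-continuous functionals on the set $\{\mc E^\g < \infty\}$, so that lower semicontinuity is automatic, and then to obtain compactness of the level sets of $\mc I_T^\g$ via an Aldous-type modulus estimate combined with that lsc. Toward this end I first prove lower semicontinuity of $\mc E^\g$. The set $\mf A_\g \cap D([0,T], \mc M^d\times \mc F^1)$ is closed in the Skorohod topology: the continuity identity \eqref{eq:4.5} passes to the limit at continuity points of the limit trajectory (a dense subset of $[0,T]$, hence enough by right-continuity), and $\mc F^1$ is a closed subset of $\mc M$ (the pointwise constraints $0 \le \rho \le 1$ survive weak limits upon testing against nonnegative smooth functions). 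For $\mc Q$, writing $\sigma(\rho) = 2\rho - 2\rho^2$ splits each inner functional in its defining supremum into a linear, hence weakly continuous, piece and the convex cubic $4\int \rho_t^2 H_t^2$, which is weakly lsc; therefore $\mc Q$ is lsc as a supremum of lsc functionals.

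Next, let $(\mb W^n, \pi^n) \to (\mb W, \pi)$ in Skorohod with $L := \liminf_n \mc J_T^\g(\mb W^n, \pi^n) < \infty$, and extract a subsequence realising the liminf. Then $\mc E^\g(\mb W^n, \pi^n) \le L + 1$ for large $n$, so $\mc E^\g(\mb W, \pi) < \infty$ by the previous step and $\mc J_T^\g$ coincides with $\J_T$ both at the limit and along the sequence. The bound $\mc Q(\pi^n) \le L + 1$ yields, via \eqref{tm4}, a uniform $L^2([0,T], H^1(\L))$ bound on $\rho^n$. Combined with the continuity equation and weak $\mc M$-bounds on $\mb W^n$ extracted from $\J_T(\mb W^n, \pi^n) \le L+1$ by inserting suitable $\mb V$, Aubin--Lions furnishes a subsequence along which $\rho^n \to \rho$ strongly in $L^2([0,T]\times\L)$; the $L^\infty$ bound then gives $(\rho^n)^2 \to \rho^2$ in $L^1$. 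For any fixed smooth $\mb V$, the three families of terms making up $\widehat\J^T_\mb V(\mb W^n, \pi^n)$ pass to the limit: the linear pairings in $\mb L^\b_\mb V$ by Skorohod convergence, the quadratic term $\int \langle \sigma(\rho^n_t), \mb V_t \cdot \mb V_t\rangle dt$ by strong $L^2$ of $\rho^n$, and the convolution term $\int \langle \sigma(\rho^n_t), \mb V_t \cdot \nabla(\jn\star \rho^n_t)\rangle dt$ by combining strong $L^2$ of $\rho^n$ with the uniform convergence of $\nabla(\jn\star \rho^n)$ (which follows from smoothness of $\jn$). Therefore $\J_T(\mb W, \pi) \le \liminf_n \J_T(\mb W^n, \pi^n) = L$. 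The lsc of $\mc I_T^\g$ follows by the identical argument with $F \in \mc C^{1,2}_0([0,T]\times\L)$ and $\nabla F$ replacing $\mb V$.

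For compactness of the level sets of $\mc I_T^\g$, fix $C > 0$ and let $\{\pi^n\}$ satisfy $\mc I_T^\g(\pi^n) \le C$. The mass bound from $\rho^n \in \mc F^1$ makes $\{\pi^n_t\}$ tight in $\mc M$ at each $t$. To control the time increments, substitute $F(s, u) = \lambda \phi(s) G(u)$, with $G \in \mc C^\infty_c(\L)$ and $\phi$ a smooth approximation of $\mathbf 1_{[t_1, t_2]}$, into $\widehat\I^{T,\g}_F(\pi^n) \le C$ and optimise over $\lambda \in \R$ to obtain
\begin{equation*}
|\langle \pi^n_{t_2} - \pi^n_{t_1}, G\rangle| \;\le\; \kappa_G \sqrt{C\,|t_2 - t_1|} \,+\, \mathrm{o}(1),
\end{equation*}
where $\kappa_G$ absorbs $\|G\|_{\mc C^1}$, $\b$, the $\mc C^1$-norm of $\jn$ and the $L^2(H^1)$ norm of $\rho^n$ (finite by $\mc Q(\pi^n) \le C$). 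Applied to a countable dense family of $G$ in $\mc C^\infty_c(\L)$, this is the Aldous condition and yields relative compactness of $\{\pi^n\}$ in $D([0,T], \mc M)$; the lsc established above then places every limit point inside the level set, which is therefore compact.

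The main obstacle is the passage to the limit in the terms $\int \sigma(\rho)\, \mb V \cdot \nabla(\jn \star \rho)$ and $\int \sigma(\rho)\, \mb V\cdot \mb V$: these are cubic and quadratic in $\rho$ and hence not weakly continuous in $\pi$ in general. The upgrade from weak to strong $L^2$ convergence of $\rho^n$ via Aubin--Lions, made possible precisely by the energy constraint $\mc E^\g < \infty$, is what rescues them; this is the motivation for the convention $\mc J_T^\g = +\infty$ outside $\{\mc E^\g < \infty\}$.
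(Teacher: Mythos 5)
Your architecture parallels the paper's: establish $\mc E^\g(\mb W,\pi)<\infty$ for the limit, obtain a uniform $L^2([0,T],H^1(\L))$ bound on the $\rho^n$, upgrade weak $L^2$ convergence to strong $L^2$ (you via Aubin--Lions, the paper via Proposition~\ref{g06p}, which is essentially the same compactness result imported from \cite{blm,flm}), and pass to the limit in $\widehat\J^T_{\mb V}$ for fixed $\mb V$ before taking the supremum. You also supply a modulus-of-continuity argument for compactness of the level sets of $\mc I_T^\g$, which the paper delegates entirely to \cite{mo3}.

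There is, however, a genuine gap at the pivotal step. You assert ``$\mc E^\g(\mb W^n,\pi^n)\le L+1$ for large $n$'' and later ``$\mc Q(\pi^n)\le L+1$,'' and everything downstream — the $L^2(H^1)$ bound, Aubin--Lions, finiteness of $\mc E^\g$ at the limit — hinges on this. But nothing you wrote establishes it. From the definition~\eqref{f10}, $\mc J_T^\g(\mb W^n,\pi^n)\le L+1$ only says that $\mc E^\g(\mb W^n,\pi^n)=\mc Q(\pi^n)$ is \emph{finite}; it imposes no quantitative bound. The functional $\mc J_T^\g$ is declared to equal $\J_T$ on $\{\mc E^\g<\infty\}$, not $\J_T+\mc E^\g$, so a bound on $\mc J_T^\g$ does not transfer to $\mc E^\g$. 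The paper closes this gap via Lemma~\ref{compar1} ($\mc I_T^\g(\pi)\le \mc J_T^\g(\mb W,\pi)$) followed by Lemma~\ref{lem01} ($\mc Q(\pi)\le C_0(1+\mc I_T^\g(\pi))$), yielding $\mc Q(\pi^n)\le C_0(L+2)$; Lemma~\ref{lem01} in turn rests on a nontrivial energy estimate (its proof invokes \cite[Prop.~4.3]{qrv} or \cite[Lemma~4.9]{blm}). Without an analogue of this chain your uniform $\mc Q$ bound is unjustified, and the lsc of $\mc E^\g$ you prove first does not help, since lower semicontinuity only controls the limit \emph{if} the sequence is already uniformly bounded. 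The same unproved bound is hidden in your constant $\kappa_G$ in the Aldous-type modulus estimate, so the compactness argument inherits the gap. A smaller but related issue: your Aubin--Lions step needs a bound on $\partial_t\rho^n$ in some $L^p([0,T],H^{-s})$, which comes from a bound on $\dot{\mb W}^n$ in $L^2(\L_T)$; the clean way to extract this from $\J_T(\mb W^n,\pi^n)\le L+1$ is the Riesz representation (Lemma~\ref{rep0}), which you should invoke explicitly rather than ``inserting suitable $\mb V$.''
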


The proof is split in several lemmata. We follow the general scheme used in  
\cite{qrv, blm}.
Denote 
\begin{equation*}
\begin{aligned}
&\cb_\gamma^{b(\cdot)} = \{ (\pi_t(du))_{t\in[0,T]}=(\rho_t(u)du)_{t\in[0,T]}\ :\ 
\rho\in L^2([0,T], H^1(\L)) \, ,\\ 
&\qquad\qquad\qquad\qquad\qquad
\rho_0(\cdot) = \gamma (\cdot); \quad \text{\bf Tr}(\rho_t)(\cdot) = b(\cdot), 
 \;\;  \text{for}\ \text{a.e.}\ t\in (0,T]\}. 
 \end{aligned}
\end{equation*}

\begin{lemma}
\label{lem01}
Let $\pi$ be a trajectory in $D([0,T],\mc M)$ such that
${\mc I}_T^\g(\pi)<\infty$. Then $\pi$ belongs to $\cb_\gamma^{b(\cdot)}\cap C([0,T], \mc F^1)$.
Furthermore, there exists a positive constant $C_0=C_0(\b,\jn)$ such that 
\begin{equation}\label{lem01-0}
\mc Q (\pi) \;\le\;  C_0\big\{ 1 + {\mc I}_T^\g (\pi)\big\}\, .
\end{equation}
\end{lemma}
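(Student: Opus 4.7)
The first observation is that finiteness of $\mc I_T^\g(\pi)$ already forces, through definition \eqref{3:Ib}, $\pi \in D([0,T], \mc F^1)$ and $\mc Q(\pi) < \infty$; by the representation \eqref{tm4} borrowed from \cite{blm, flm}, this immediately yields $\rho \in L^2([0,T], H^1(\L))$, so in particular $\rho_t$ admits a trace $\text{\bf Tr}(\rho_t) \in L^2(\Gamma)$ for almost every $t$. It then remains to identify the initial datum and the boundary trace, to prove time continuity, and to establish the quantitative bound \eqref{lem01-0}.

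For the initial and boundary identifications I would feed $\I_T^\g$ with families of test functions scaled by a parameter $\lambda \in \R$. Choosing $F_t(u) = \lambda \phi(t) G(u)$ with $G \in \mc C^\infty_c(\L)$, $\phi(0)=1$, $\phi(T)=0$, the leading contribution to $\widehat\I^{T,\gamma}_F(\pi)$ is the linear term $\lambda \langle \rho_0 - \gamma, G\rangle$, while the term involving $|\nabla F|^2$ is $O(\lambda^2)$; unless $\langle \rho_0 - \gamma, G\rangle = 0$, letting $\lambda$ tend to $+\infty$ or $-\infty$ contradicts the finiteness of $\I_T^\g$, forcing $\rho_0 = \gamma$. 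For the lateral boundary, substitute $F_t(u) = \lambda \phi(t)(1-u_1^2) G(u)$ with $G$ smooth and localized near $\Gamma$. Integration by parts against $\Delta F$, permitted because $\rho \in L^2([0,T], H^1(\L))$, produces a boundary contribution $\lambda \int_0^T \phi(t) \int_\Gamma (b - \text{\bf Tr}(\rho_t))\, \text{\bf n}_1\, \partial_1 F_t\, dS\, dt$ while the quadratic counter-term remains $O(\lambda^2)$; the same scaling argument forces $\text{\bf Tr}(\rho_t) = b$ for almost every $t \in [0,T]$.

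For the quantitative energy bound, finiteness of $\I_T^\g(\rho)$ combined with the quadratic structure of $\widehat\I^{T,\gamma}_F$ allows a Riesz-type representation of the linear functional $F \mapsto \ell_F^\b(\rho|\gamma)$ on the Hilbert space $L^2([0,T]\times\L,\sigma(\rho)\, du\, dt)^d$: there exists a vector field $\mb A$ with $\frac{1}{2}\int_0^T\int_\L \sigma(\rho)|\mb A|^2 = \I_T^\g(\rho)$ such that $\ell_F^\b(\rho|\gamma) = \int_0^T\int_\L \sigma(\rho)\, \mb A \cdot \nabla F\, du\, dt$ for every $F \in \mc C^{1,2}_0([0,T]\times\L)$. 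Unwinding this identity gives that $\rho$ is a weak solution of
\[
\partial_t \rho \;=\; \Delta \rho \;-\; \nabla \cdot \bigl(\sigma(\rho)[\mb A + \b \nabla(\jn \star \rho)]\bigr),
\]
with $\rho_0 = \gamma$ and $\rho|_\Gamma = b$. Multiplying by $\partial_\rho \calr(\rho|\tilde\theta)$, where $\calr(x|y) = x\log(x/y) + (1-x)\log((1-x)/(1-y))$ and $\tilde\theta$ is the extension of $b$ supplied at the start of Section \ref{sec2}, the crucial point is that $\partial_\rho \calr(b|b) = 0$, so every boundary term produced by integration by parts vanishes. The resulting identity reads
\[
\int_\L \calr(\rho_T|\tilde\theta)\, du \,-\, \int_\L \calr(\gamma|\tilde\theta)\, du \;=\; -16\, \mc Q(\rho) \,+\, \mathscr R,
\]
where $\mathscr R$ gathers three cross terms involving $\nabla \rho \cdot \mb A$, $\nabla \rho \cdot \nabla(\jn \star \rho)$, and $\nabla \tilde\theta$ against $\sigma(\rho)[\mb A + \b \nabla(\jn \star \rho)]$. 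Each is controlled by the weighted Cauchy--Schwarz inequality $|a\cdot b| \le \epsilon |a|^2/\sigma(\rho) + \sigma(\rho)|b|^2/(4\epsilon)$ with $\epsilon$ small enough that only a fraction (say $12\mc Q(\rho)$) of $\mc Q(\rho)$ is absorbed back into the left-hand side, while the residual terms are bounded by $\I_T^\g(\rho)$, $\b^2 T|\L|$ (using Lemma \ref{lem-jn} and the boundedness of $\jn$), and a constant depending on $\|\nabla\tilde\theta\|_\infty$ and $\inf \sigma(\tilde\theta)$. This leaves $\mc Q(\rho) \le C_0\{1 + \I_T^\g(\rho)\}$, which is \eqref{lem01-0}.

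The continuity $\rho \in C([0,T], \mc F^1)$ then follows from the weak equation above: since $\rho \in L^2([0,T], H^1(\L))$ and $\sigma(\rho)[\mb A + \b\nabla(\jn\star\rho)] \in L^2([0,T]\times\L)^d$, one reads off $\partial_t \rho \in L^2([0,T], H^{-1}(\L))$, and the classical Lions--Aubin embedding $L^2([0,T],H^1) \cap H^1([0,T],H^{-1}) \hookrightarrow C([0,T],L^2)$ yields the desired continuity, while the pointwise constraint $0 \le \rho \le 1$ persists in the limit. The main technical hurdle I foresee is the entropy integration, because $\rho$ may touch $0$ or $1$ where $\partial_\rho \calr(\rho|\tilde\theta)$ is singular; this is to be circumvented by the standard truncation $\rho \mapsto (\rho \vee \delta) \wedge (1-\delta)$, proving the identity with the truncated quantity and passing to the limit $\delta \downarrow 0$ using dominated convergence and the fact that $\calr(\cdot|\tilde\theta)$ is globally bounded on $[0,1]$ since $\tilde\theta$ is bounded away from $\{0,1\}$.
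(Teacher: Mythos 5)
The paper itself omits the proof, referring to Lemma 4.1 of \cite{flm} and Proposition 4.3 of \cite{qrv} / Lemma 4.9 of \cite{blm}; your overall strategy (Riesz representation, relative-entropy identity against $\tilde\theta$ which kills the boundary terms, Lions--Aubin embedding, truncation to handle the singularity of $\partial_\rho \calr$) is indeed the standard one followed in those references, and the coefficient bookkeeping ($\partial_{xx}\calr = 2/\sigma$, hence the $-16\,\mc Q$) is correct.

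There is, however, a genuine gap in the step identifying the initial datum and the boundary trace. With $F_t = \lambda\,\phi(t)\,G(u)$, the functional $\widehat\I^{T,\g}_{F}(\pi) = \lambda\,\ell^\b_{\phi G}(\rho|\g) - \tfrac{\lambda^2}{2}\int_0^T\langle\sigma(\rho_t),|\nabla(\phi_t G)|^2\rangle\,dt$ is a \emph{downward} parabola in $\lambda$ whenever the quadratic coefficient is positive, so it tends to $-\infty$, not $+\infty$, as $\lambda\to\pm\infty$. Thus ``letting $\lambda$ tend to $+\infty$ or $-\infty$'' does not contradict finiteness of $\I_T^\g$: the supremum over $\lambda$ alone is the finite number $[\ell^\b_{\phi G}(\rho|\g)]^2 / \big(2\int\langle\sigma(\rho),|\nabla(\phi G)|^2\rangle\big)$. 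To conclude one must show that this ratio can be made to diverge, which requires coupling the $\lambda$-scaling with a localization: take $\phi=\phi_\varepsilon$ supported in $[0,\varepsilon]$ with $\phi_\varepsilon(0)=1$, so that the quadratic term is $O(\varepsilon\lambda^2)$ while the relevant linear term converges (by right-continuity of $t\mapsto\pi_t$) to $\lambda\langle\rho_{0^+}-\gamma,G\rangle$; choosing $\lambda\sim\varepsilon^{-1/2}$ then forces $\rho_{0^+}=\gamma$. Analogously, for the boundary trace the quadratic term is a bulk integral of $|\nabla F|^2$ whereas the linear term you isolate is a surface integral over $\Gamma$, so one must in addition shrink the spatial support of $G$ onto $\Gamma$ to make the bulk/surface ratio degenerate. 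You mention localizing $G$ near $\Gamma$ but never take this limit, and for the initial-time argument you omit the $\varepsilon\to0$ step entirely. Also, in the linear part of $\widehat\I^{T,\g}_F$ there are further terms ($\int\langle\rho_t,\Delta F_t\rangle$, the Kac term) of the same order in $\lambda$; it is precisely the support shrinking that makes them negligible, and this should be said. A second, minor, omission: your list of cross terms in $\mathscr R$ misses $\int \partial_{xy}\calr(\rho|\tilde\theta)\,\nabla\rho\cdot\nabla\tilde\theta$, which arises from $\nabla\psi = (2/\sigma(\rho))\nabla\rho + \partial_{xy}\calr\,\nabla\tilde\theta$ paired against $\Delta\rho$; it is controlled the same way but should be accounted for.
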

\begin{proof}
The proof of the first statement of this Lemma  is similar to the one of Lemma 4.1 in \cite{flm} and is therefore omitted.
One can prove \eqref{lem01-0} by using the same arguments as in the proof of 
Proposition 4.3. \cite{qrv} or Lemma 4.9. in \cite{blm}.
\end{proof}

The proof of the lower-semicontinuity of the rate function $\mc I_T^\g$ is based on compactness arguments; its
basic tools is given by the next Proposition. We refer to \cite{blm, flm}  for the proof.

\begin{proposition}
\label{g06p} 
Let $\{\pi^n : n\ge 1\}$ be a sequence of functions in $D([0,T],\mc M)$ such that
$$
\sup_{n\in\N} \big\{\mc I^\g_T (\pi^n)\big\}< \infty\, 
$$
with $\pi^n(t,du)=\rho^n(t,u)du$, for $t\in[0,T]$ and $n\in\N$.
Suppose that the
sequence $\rho^n$ converges weakly in $L^2 ([0,T]\times \L)$ to some $\rho$.
Then, $\rho^n$ converges strongly in $L^2 ([0,T]\times \L)$ to $\rho$.
\end{proposition}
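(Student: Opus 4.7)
The plan is to establish strong $L^2([0,T]\times\L)$ compactness of $(\rho^n)$ via an Aubin--Lions argument, extracting from the hypothesis $\sup_n \mc I_T^\g(\pi^n) <\infty$ both spatial and temporal Sobolev regularity for $\rho^n$. The strong $L^2$ cluster points then have to coincide with the weak $L^2$ limit $\rho$, so that the full sequence converges.

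\textbf{Spatial regularity.} By Lemma \ref{lem01}, the estimate \eqref{lem01-0} combined with the hypothesis produces a uniform bound on $\mc Q(\pi^n)$. Plugging this into the explicit formula \eqref{tm4}, and using that $0\le\sigma(\rho)\le 1/2$ on $[0,1]$, I obtain
\[
\sup_n \int_0^T \!\! dt \int_\L |\nabla \rho^n_t(u)|^2 \, du \;<\; \infty\, .
\]
Together with $0\le \rho^n\le 1$ this shows that $(\rho^n)$ is bounded in $L^2([0,T], H^1(\L))$.

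\textbf{Temporal regularity.} For every $F\in C^{\infty}_c((0,T)\times \L)$, the inequality $\widehat\I_F^{T,\g}(\pi^n)\le \mc I_T^\g(\pi^n)\le C$ applied to $\pm\varepsilon F$ and optimized in $\varepsilon>0$ yields the linear estimate
\[
\bigl|\ell_F^\b(\rho^n\,|\,\g)\bigr| \;\le\; C_1\,\Big(\int_0^T\!\!\int_\L \sigma(\rho^n_t)\,|\nabla F_t|^2\,du\,dt\Big)^{1/2} \;\le\; C_2\,\|\nabla F\|_{L^2([0,T]\times\L)}\, ,
\]
with $C_2$ independent of $n$. Since $F$ vanishes in a neighbourhood of $\{0,T\}\cup([0,T]\times\Gamma)$, the endpoint and $\Gamma$-boundary contributions in $\ell_F^\b$ drop out; an integration by parts rewrites $\int_0^T\langle\rho^n_t,\Delta F_t\rangle \, dt$ as $-\int_0^T\langle\nabla\rho^n_t,\nabla F_t\rangle \, dt$, and the nonlocal term $\beta\int_0^T\langle\sigma(\rho^n_t),\nabla F_t\cdot\nabla(\jn\star\rho^n_t)\rangle \, dt$ is controlled, via Lemma \ref{lem-jn} and the fact that $\jn(u,\cdot)$ is a probability density, by $C\|\nabla \rho^n\|_{L^2([0,T]\times\L)}\|\nabla F\|_{L^2([0,T]\times\L)}$. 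Using the uniform $L^2(H^1)$ bound of the previous step, these manipulations yield
\[
\Big|\int_0^T\langle\rho^n_t,\partial_t F_t\rangle \,dt\Big| \;\le\; C_3\,\|\nabla F\|_{L^2([0,T]\times\L)}\, ,
\]
which identifies $(\partial_t \rho^n)$ as a uniformly bounded sequence in $L^2([0,T], H^{-1}(\L))$.

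\textbf{Aubin--Lions and main obstacle.} The compact embedding $H^1(\L)\hookrightarrow L^2(\L)$ together with the continuous inclusion $L^2(\L)\hookrightarrow H^{-1}(\L)$ allows me to invoke the Aubin--Lions lemma, giving relative compactness of $(\rho^n)$ in $L^2([0,T]\times\L)$. Any strong $L^2$ subsequential limit must agree with the weak limit $\rho$, and a standard subsequence-of-subsequences argument promotes this to strong convergence of the full sequence. The principal obstacle is the second step: one must simultaneously linearize the quadratic Legendre penalty $\frac12\int_0^T\langle\sigma(\rho^n_t),|\nabla F_t|^2\rangle\,dt$ via the $\varepsilon$-trick, and absorb the nonlocal derivative term through the gradient transfer $|\nabla(\jn\star\rho^n_t)|\le\jn\star|\nabla\rho^n_t|$ of Lemma \ref{lem-jn}, so that the two sources of control (the $L^2(H^1)$ bound on $\rho^n$ and the rate-function bound) combine into a single $n$-uniform $H^{-1}$-dual norm bound on $\partial_t\rho^n$.
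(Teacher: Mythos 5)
The paper does not actually prove Proposition \ref{g06p}: the text immediately preceding it states that the basic tool is this proposition and then says ``We refer to \cite{blm, flm} for the proof.'' So there is no in-paper argument to compare against. On its own merits, your Aubin--Lions proof is correct and is the standard route for statements of this type. The two pillars are exactly as you set them up: (a) the uniform bound $\sup_n \mc Q(\pi^n) \le C_0(1 + \sup_n \mc I_T^\g(\pi^n)) < \infty$ from Lemma \ref{lem01} together with the explicit formula \eqref{tm4} and $\sigma\le 1/2$ gives $\sup_n\|\nabla\rho^n\|_{L^2([0,T]\times\L)} < \infty$, hence $(\rho^n)$ bounded in $L^2([0,T],H^1(\L))$; (b) for $F\in C^\infty_c((0,T)\times\L)$, testing $\widehat\I_{\varepsilon F}^{T,\g}(\pi^n)\le \mc I_T^\g(\pi^n)$ against $\pm\varepsilon F$ and maximizing the resulting $\varepsilon$-quadratic gives $|\ell_F^\b(\rho^n|\g)|\le C\|\nabla F\|_{L^2}$, and after dropping boundary terms, integrating by parts in space, and controlling the nonlocal term by $\|\nabla(\jn\star\rho^n_t)\|_{L^2}\le\|\nabla\rho^n_t\|_{L^2}$ (a consequence of Lemma \ref{lem-jn} and of $\jn$ being a symmetric probability kernel), this yields the uniform $L^2([0,T],H^{-1}(\L))$ bound on $\partial_t\rho^n$. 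Since $\L = (-1,1)\times\TT^{d-1}$ is a bounded Lipschitz cylinder, $H^1(\L)\hookrightarrow L^2(\L)$ is compact, Aubin--Lions applies, and the identification of strong subsequential limits with the given weak limit $\rho$ upgrades to convergence of the whole sequence. The cited references reach the same conclusion from the same two uniform estimates, typically phrased via a time-mollification argument rather than an explicit appeal to the Aubin--Lions lemma; the substance is the same, and your formulation is cleaner.
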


\noindent{\it Proof of Proposition \ref{s03}.}
The proof for the functional ${\mc I}_T^\g$ is omitted since it's the same as for
the one dimensional boundary driven Kawasaki process with Neuman Kac interaction  \cite{mo3}.

To prove the lower semicontinuity of the functional ${\mc J}_T^\g$, we have to show that for all
$a \ge 0$ the set
\begin{equation*}
\mb E_a \; =\; \Big\{(\mb W , \pi) \in D([0,T],\mc M^{d+1}) \ : \ 
{\mc J}_T^\g (\mb W,\pi)\le a \Big\} 
\end{equation*}
is closed in $D([0,T],\mc M^{d+1})$.  
Fix $a\ge 0$ and consider a
sequence $\{(\mb W^n,\pi^n) : n\ge 1\}$ in $\mb E_a$ converging to some $(\mb W,\pi)$
in $D([0,T],\mc M^{d+1})$, and denote by $\pi^n_t(du)=\rho^n_t(u)du$.
Then for all $\mb V$ in $(\mc C ([0,T]\times \L))^d$ and $F$ in $\mc C ([0,T]\times \L)$,
\begin{equation}\label{gf06}
\begin{aligned}
&\lim_{n \to \infty} \int_0^T dt\, \< \mb W^n_t ,\mb V_t \>
= \int_0^T dt\, \<\mb W_t ,\mb V_t \> \, ,  \\ 
&\lim_{n \to \infty} \int_0^T dt\, \< \pi^n_t ,F_t \>
=  \int_0^T dt\, \<\pi_t ,F_t \> . 
\end{aligned}
\end{equation}

We claim that $\mc E^\g(\mb W,\pi)< +\infty$. Indeed, from the lower semicontinuity of ${\mc I}_T^\g$, 
Lemma \ref{compar1} and Lemma \ref{lem01}, $\pi$ belongs to $\mc B_\g^{b(\cdot)}$ and
$\mc Q (\pi) \;\le\;  C_a$
for some positive constant $C_a$. 
Moreover, for any $F\in \mc C^1_0(\L)$
\begin{equation*}
\begin{aligned}
0& \; =\lim_{n\to \infty}\sup_{t\in[0,T]}\Big\{\langle \pi_t^n , F\rangle - \langle \gamma , F\rangle -
\langle \mb W_t^n , \nabla F \rangle\Big\}\\
\ &\ \  =\sup_{t\in[0,T]}\Big\{ \langle \pi_t , F\rangle - \langle \gamma , F\rangle -
\langle \mb W_t , \nabla F \rangle \Big\} \, ,
\end{aligned}
\end{equation*}
proving that $(\mb W,\pi)\in \mf A_\g$ and then $\mc E^\g(\mb W,\pi)< +\infty$, so that
${\mc J}_T^\g(\mb W,\pi)= \J_T (\mb W,\pi)$.

Denote by $\rho$ the density of $\pi$: $\pi_t(du)=\rho_t(u)du$.
Since $\rho^n$ converges weakly to $\rho$ in $L^2([0,T]\times \L)$ (cf. \eqref{gf06}), by Lemma \ref{g06p}, 
$\rho_n$ converges strongly to
$\rho$ in $L^2 ([0,T]\times \L)$, hence for any $\mb V$ in $(\mc C^{1,1} ([0,T]\times \L))^d$
\begin{equation*}
\begin{aligned}
 &\lim_{n\to\infty}\Big\{ \mb L_{\mb V}^\b (\mb W^n, \pi^n)
\;-\; \frac 12 \int_0^T dt \, \< \sigma(\rho_t^n),\mb V_t\cdot  \mb V_t \>\Big\}\\
&\qquad\qquad\qquad\qquad\qquad
=\mb L_{\mb V}^\b (\mb W, \pi)
\;-\; \frac 12 \int_0^T dt \, \< \sigma(\rho_t),\mb V_t\cdot  \mb V_t \>\, .
\end{aligned}
\end{equation*}
Since $(\mb W^n,\pi^n)$ belongs to $\mb E_a$, the left hand side is bounded by
$a$. Taking the supremum over $\mb V$ in $(\mc C^{1,1} ([0,T]\times \L))^d$
we obtain that $\J_T (\mb W,\pi)\le a$ and conclude the proof of 
the lower semicontinuity of ${\mc J}_T^\g$.
\cqfd

\subsection{Representation theorem}
Given a path $\pi\in D([0,T];\mc F^1)$ with $\pi(t,du)=\rho(t,u)du$,
we denote by $\LL^2(\s(\pi))$
the Hilbert space of vector-valued functions 
$\mb G:[0,T]\times \L
\to \bb R^d$ endowed with the inner product $\<\!\<\cdot, \cdot \>\!\>_{\s(\pi)}$ defined by
\begin{equation*}
\<\!\< \mb V, \mb G\>\!\>_{\s(\pi)} \;=\; \int_0^T dt \int_{\L} du\,
\s(\pi(t,u)) \, \mb V(t,u) \cdot \mb G(t,u)\;.
\end{equation*}
The norm of $\LL^2(\sigma(\pi))$ is denoted by $\Vert\cdot\Vert_{\LL^2(\sigma(\pi))}$.

Denote by $H^1_0(\sigma(\pi))$ the Hilbert space
induced by $C^{1,2}_0([0,T]\times \L)$ endowed with the inner
product $\langle\cdot,\cdot\rangle_{1,\sigma(\pi)}$ defined by
\begin{equation*}
\langle F,H\rangle_{1,\sigma(\pi)}
=\<\!\< \nabla F, \nabla H\>\!\>_{\s(\pi)}\, .
\end{equation*}
Induced means that we first declare two functions $F,H$ in $\mc C^{1,2}_0([0,T]\times \L)$ to be equivalent if $\langle
F-H,F-H\rangle_{1,\sigma(\pi)} = 0$ and then we complete the quotient
space with respect to the inner product
$\langle\cdot,\cdot\rangle_{1,\sigma(\pi)}$. 
The norm of $H^1_0(\sigma(\pi))$ is denoted by $\Vert\cdot\Vert_{H^1_0(\sigma(\pi))}$.

\begin{lemma}\label{rep0} Let $(\mb W,\pi)\in D([0,T],\mc M^{d+1})$ such that ${\mc J}_T^\g(\mb W,\pi) <\infty$.
There exists a function $\mb U$ in $\LL^2(\sigma(\pi))$ so that $\mc E^\g (\mb W, \pi)<\infty$ and
$(\mb W, \pi)$ is the weak solution of the equation
\begin{equation}\label{rep2}
\partial_t \mb W_t  \;=\; 
-\nabla \rho_t +\sigma(\rho_t) \big[\b \nabla (\jn  \star \rho_t )  \, +\, \mb U\big]\, ,
\quad \mb W_0 =0\, ,
\end{equation}
in the following sense : for any ${\mb G}\in \big(C^{1,1}([0,T]\times \L) \big)^d$,
\begin{equation*}\label{rep2-0}
\mb L_{\mb G}^\b (\mb W, \pi) \;=\; \<\!\< \mb G, \mb U\>\!\>_{\s(\pi)}
\;=\; \int_0^T dt\, \< \s(\pi_t) , \mb G_t \cdot \mb U_t\>\, ,
\end{equation*}
where the linear function $\mb G \mapsto \mb L_{\mb G}^\b (\mb W, \pi)$ is defined by \eqref{eq:4.6}.

Furthermore, there exists a function $F\in H_0^{1}(\s(\pi))$ such that $\rho(\cdot,\cdot)$ solves
the equation \eqref{eq:V} and 
$\text{\bf div} \big(\s(\rho)(\mb U-\nabla F)\big)=0$ in the weak sens described by \eqref{rep3a}.
Moreover,
\begin{equation}\label{rep1}
 {\mc J}_T^\g (\mb W, \pi) \;=\frac12\; \Vert\mb U\Vert_{\mb L^2(\sigma(\pi))}
 \;=\;\frac 12 \int_0^T dt \, \< \s(\rho_t),\mb U_t\cdot  \mb U_t \>
\end{equation}
and 
\begin{equation}
\label{rep3}
\mc I_T^\g(\pi)\, =\, \frac12\; \Vert F\Vert_{H^1_0(\sigma(\pi))}
\, =\, \frac12 \int_0^Tdt\; \langle\sigma(\rho_t),\nabla F_t\cdot\nabla F_t\rangle\, .
\end{equation}
Here {\bf div} stands for the divergence operator.
\end{lemma}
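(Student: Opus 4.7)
The plan is to obtain $\mb U$ and $F$ by Riesz representation on the Hilbert spaces $\LL^2(\sigma(\pi))$ and $H^1_0(\sigma(\pi))$, combined with an orthogonal (Hodge-type) decomposition of $\mb U$ relative to the closed subspace of gradients, following the scheme developed for the simple exclusion process in \cite{bdgjl2}. The hypothesis $\mc J_T^\gamma(\mb W,\pi) < \infty$, together with the definitions \eqref{f10} and \eqref{1:Q} and the representation \eqref{tm4}, forces $(\mb W,\pi) \in \mf A_\gamma \cap D([0,T],\mc M^d \times \mc F^1)$, $\rho \in L^2([0,T], H^1(\L))$, and $\mc J_T^\gamma(\mb W,\pi) = \J_T(\mb W,\pi)$.

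The map $\mb V \mapsto \mb L_{\mb V}^\b(\mb W,\pi)$ is linear on $(\mc C^{1,1}([0,T]\times\L))^d$, and plugging $s\mb V$ into \eqref{eq:4.6} and optimizing in $s \in \R$ yields
\begin{equation*}
\mb L_{\mb V}^\b(\mb W,\pi)^2 \;\le\; 2\,\J_T(\mb W,\pi) \int_0^T dt\, \langle \sigma(\rho_t), \mb V_t \cdot \mb V_t\rangle.
\end{equation*}
Since $\sigma$ is bounded, smooth vector fields are dense in $\LL^2(\sigma(\pi))$ by standard mollification, so $\mb L^\b_{\cdot}(\mb W,\pi)$ extends to a continuous linear form on $\LL^2(\sigma(\pi))$; Riesz representation then supplies $\mb U \in \LL^2(\sigma(\pi))$ satisfying $\mb L_{\mb G}^\b(\mb W,\pi) = \<\!\<\mb G,\mb U\>\!\>_{\sigma(\pi)}$, which is the claimed weak form of \eqref{rep2}. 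Identity \eqref{rep1} then follows by approximating $\mb U$ by smooth $\mb V_n \to \mb U$ in $\LL^2(\sigma(\pi))$: one obtains $\widehat\J^T_{\mb V_n} \to \tfrac12\|\mb U\|^2_{\LL^2(\sigma(\pi))}$, hence $\J_T \ge \tfrac12\|\mb U\|^2$, while Cauchy--Schwarz applied to $\widehat\J^T_{\mb V}$ gives the reverse inequality.

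For $F$, I would project $\mb U$ orthogonally onto the closed subspace $\nabla H^1_0(\sigma(\pi)) \subset \LL^2(\sigma(\pi))$, noting that by definition the map $F' \mapsto \nabla F'$ is an isometry from $H^1_0(\sigma(\pi))$ onto its closed image. This produces a unique $F \in H^1_0(\sigma(\pi))$ with $\<\!\<\nabla F',\mb U - \nabla F\>\!\>_{\sigma(\pi)} = 0$ for every $F' \in \mc C_0^{1,2}([0,T] \times \L)$, which is exactly the announced weak formulation of $\mathrm{div}(\sigma(\rho)(\mb U - \nabla F)) = 0$. Writing $\mb U = \nabla F + (\mb U - \nabla F)$ in \eqref{rep2} and combining with the continuity equation encoded in $(\mb W,\pi) \in \mf A_\gamma$ (namely $\partial_t \rho = \nabla \cdot \dot{\mb W}_t$ against $\mc C^1_0$ test functions), the divergence-free piece vanishes and one reads off that $\rho$ solves \eqref{eq:V} with $\mb V = \nabla F$. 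Finally, a direct integration by parts using $(\mb W,\pi) \in \mf A_\gamma$ shows $\mb L^\b_{\nabla F'}(\mb W,\pi) = \ell^\b_{F'}(\rho|\gamma)$ for every $F' \in \mc C_0^{1,2}([0,T] \times \L)$, hence $\widehat\I^{T,\gamma}_{F'}(\pi) = \widehat\J^T_{\nabla F'}(\mb W,\pi) = \<\!\<\nabla F',\nabla F\>\!\>_{\sigma(\pi)} - \tfrac12\|\nabla F'\|^2_{\LL^2(\sigma(\pi))}$; taking the supremum over $F'$ and approximating $F$ in $H^1_0(\sigma(\pi))$ by such smooth functions delivers \eqref{rep3}. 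The main obstacle will be the density/approximation statements (smooth vector fields dense in $\LL^2(\sigma(\pi))$, and $\mc C_0^{1,2}$ dense in $H^1_0(\sigma(\pi))$), which are delicate because $\sigma(\rho)$ may degenerate on the sets $\{\rho\in\{0,1\}\}$; once these density results are secured, the remainder is routine Hilbert-space projection.
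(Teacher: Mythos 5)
Your proposal is correct and follows the same Riesz-representation framework (as in \cite[\S 10.5]{kl}) that the paper invokes, but with a slightly different organization. The paper applies Riesz twice: once to $\J_T$ to obtain $\mb U$ satisfying \eqref{rep1}--\eqref{rep2}, and once (after invoking Lemma \ref{compar1} to guarantee $\mc I_T^\g(\pi)<\infty$) to $\I_T^\g$ to obtain $F\in H^1_0(\s(\pi))$ together with the fact that $\rho$ solves \eqref{eq:V} with $\mb V=\nabla F$; the orthogonality \eqref{rep3a} is then deduced afterwards from \eqref{rep2} and the $\mf A_\g$-constraint. You apply Riesz once to get $\mb U$, obtain $F$ as the orthogonal projection of $\mb U$ onto the closed subspace $\nabla H^1_0(\s(\pi))\subset\LL^2(\s(\pi))$ (so \eqref{rep3a} is automatic), and then verify that $\rho$ solves \eqref{eq:V} and that \eqref{rep3} holds by computing $\widehat\I^{T,\g}_{F'}$ directly. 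By uniqueness of the projection/representer, both constructions produce the same $F$, so the two routes are mathematically equivalent; yours replaces the second Riesz application and the appeal to Lemma \ref{compar1} by a Hodge-type decomposition, which makes the orthogonality transparent at the cost of having to read off \eqref{eq:V} afterwards. Two small remarks: the continuity equation encoded in $\mf A_\g$ is $\partial_t\rho+\nabla\cdot\dot{\mb W}_t=0$, not $\partial_t\rho=\nabla\cdot\dot{\mb W}_t$ (harmless sign slip); and for the Riesz step you only need $\mb L^\b_\cdot(\mb W,\pi)$ to be bounded on the span of smooth fields and then take the representer in its $\LL^2(\s(\pi))$-closure --- full density of smooth fields in the degenerate weighted space is not required, which sidesteps the concern you raise at the end.
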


\begin{proof}
Assume that ${\mc J}_T^\g(\mb W,\pi) <\infty$, then $\mc E^\g (\mb W, \pi)<\infty$ and $\J_T (\mb W,\pi) <\infty$.
Following the arguments in \cite[\S 10.5]{kl}, from Riesz representation theorem, we derive the existence
of a function $\mb U$ in $\LL^2(\sigma(\pi))$ satisfying \eqref{rep1} and \eqref{rep2}.

On the other hand, from Lemma \ref{compar1}, we have $\mc I_T^\g(\pi) < \infty$. 
Using again the Riesz representation theorem (cf. \cite[\S 10.5]{kl} ), we derive the existence
of a function $F$ in $H^1_0(\sigma(\pi))$ such that $\rho$ is the  weak solution of  the boundary value  problem \eqref{eq:V},
with $\mb V=\nabla F$. Then, the representation \eqref{rep3} for the functional $\mc I_T^\g$ follows immediately. 
Finally, equation \eqref{rep2} and the fact that $(\mb W,\pi)\in \mf A_\g$  yields,
\begin{equation}\label{rep3a}
\big\<\!\<(\mb U-\nabla F),\nabla G\big\>\!\>_{\s(\rho)}\, =\, 0, 
\end{equation}
for all $G\in C_0^{1,2}([0,T]\times \L) $.
\end{proof}

\medskip
\section{large deviations upper bound for the empirical current}\label{secldub}
In this section, we prove the large deviations upper bounds stated in Theorem \ref{gd-(d,c)} and in Theorem \ref{s02}. 
In view of the definitions of the energy functional ${\mc E}^\gamma$ and the rate functional for the large deviations,
we need to exclude in the large deviation regime, paths $(\mb W_t,\pi_t)_{t\in[0,T]}$ which do not belong to 
$\mf A_\gamma $, 
and with infinite energy $\mc Q(\pi)=+\infty$. 

\subsection{The set $\mf A_\g$} Fix a positive profile $\g$ and let $\widetilde {\mf A}_\g$ be the set of trajectories
$(\mb W, \pi)$ in $D([0,T], \mc M^{d+1})$ such that 
for any $G \in \mc C_0^2(\L)$ and any $\varphi\in \mc C^1([0,T])$ 
$$
\sup_{0\le t\le T}\V_{(G,\varphi)}^{t,\g}(\mb W,\pi)\, =\, 0\, ,
$$
where for $(G,\varphi)\in \mc C_0^2(\L)\times  \mc C^1([0,T])$ and $0\le t\le T$,
\begin{equation}\label{eq:4.5b}
\begin{aligned}
\V_{(G,\varphi)}^{t,\g}(\mb W,\pi) \, =&\langle \pi_t , G\rangle\varphi(t) - \langle \gamma , G\rangle \varphi(0)
-\int_0^t ds \langle \pi_s , G\rangle  \varphi'(s)\\
& \ \ -
\langle \mb W_t , \nabla G \rangle \varphi(t) + 
\int_0^t ds \langle \mb W_s , \nabla G\rangle \varphi'(s)\, .
\end{aligned}
\end{equation}
Here $\varphi'$ stands for the time derivative of $\varphi$.

\begin{lemma}\label{t-u} Fix $(\mb W,\pi)$  in $D([0,T],\mc M^{d+1})$ such that
$$
\sup_{(G,\varphi)}\sup_{0\le t\le T} \big\{\V_{(G,\varphi)}^{t,\g}(\mb W,\pi) \big\}< \infty\, ,
$$
where the supremum is taken over all $(G,\varphi)\in \mc C_0^2(\L)\times  \mc C^1([0,T])$.
Then $(\mb W,\pi)$ belongs to $\mf A_\g$.
\end{lemma}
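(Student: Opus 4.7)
\emph{Plan.} The strategy is to exploit the linearity of $\V_{(G,\varphi)}^{t,\g}(\mb W,\pi)$ in the spatial test function $G$ to upgrade the finiteness of the double supremum to the pointwise vanishing of $\V$, then to read off the continuity equation by specializing $\varphi$ to a constant, and finally to relax the regularity of the spatial test function from $\mc C_0^2(\L)$ to $\mc C_0^1(\L)$ by a standard density argument.

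First I would observe that, for each fixed $\varphi\in\mc C^1([0,T])$ and $t\in[0,T]$, the map $G\mapsto \V_{(G,\varphi)}^{t,\g}(\mb W,\pi)$ is linear, hence $\V_{(cG,\varphi)}^{t,\g}(\mb W,\pi)=c\,\V_{(G,\varphi)}^{t,\g}(\mb W,\pi)$ for every $c\in\R$. The hypothesis forces this quantity to be bounded above by a fixed constant $M$ independently of $c$, and choosing $c\to\pm\infty$ yields
\[
\V_{(G,\varphi)}^{t,\g}(\mb W,\pi)=0\qquad\text{for all }(G,\varphi)\in\mc C_0^2(\L)\times\mc C^1([0,T])\text{ and all }t\in[0,T].
\]
Specializing to the constant $\varphi\equiv 1$ eliminates every time--integral term in \eqref{eq:4.5b} (since $\varphi'\equiv 0$ and $\varphi(0)=\varphi(t)=1$), leaving exactly $\langle\pi_t,G\rangle-\langle\gamma,G\rangle=\langle\mb W_t,\nabla G\rangle$ for every $G\in\mc C_0^2(\L)$ and $t\in[0,T]$, which is the defining relation of $\mf A_\g$ restricted to $\mc C^2$ test functions.

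To relax the regularity to $G\in\mc C_0^1(\L)$ I would invoke the density of $\mc C_0^2(\L)$ in $\mc C_0^1(\L)$ in the $\mc C^1(\overline\L)$ norm, realized for instance by Whitney--extending $G$ to an element of $\mc C^1(\R^d)$, mollifying, and subtracting the affine interpolation in the $u_1$--variable between the (small) boundary traces at $u_1=\pm 1$ in order to restore the null Dirichlet condition. Since $\pi_t$, $\gamma$ and $\mb W_t$ are finite signed measures on $\overline\L$, the maps $G\mapsto\langle\pi_t-\gamma,G\rangle$ and $G\mapsto\langle\mb W_t,\nabla G\rangle$ are continuous in this topology, so the identity passes to the limit and $(\mb W,\pi)\in\mf A_\g$. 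The only non--routine piece is this density approximation; the scaling reduction and the choice $\varphi\equiv 1$ are purely algebraic. The variable $\varphi$ appears in the statement because, in the upper--bound proof, it is natural to control $\V$ through a rich family of time--dependent test pairs produced by exponential martingale estimates, and the present lemma is the bridge showing that such control still implies membership in $\mf A_\g$.
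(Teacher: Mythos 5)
Your proof is correct and relies on the same core device as the paper's: exploit linearity of $\V_{(G,\varphi)}^{t,\g}$ in the test function, scale by a large constant, and let the uniform bound force the expression to vanish. The differences are in presentation and completeness, and both go in your favour. The paper first passes to a two-time difference $\<\pi_{t_1}-\pi_{t_2},G\> - \<\mb W_{t_1}-\mb W_{t_2},\nabla G\>$ (which, as written, requires approximating a step-like $\varphi$ in $\mc C^1([0,T])$ and then appealing to right-continuity of the path, and in the end still has to reintroduce the initial datum $\gamma$), whereas you kill the $\varphi$-dependence at once by noting $\V_{(G,\varphi)}^{t,\g}=0$ for \emph{all} $(G,\varphi,t)$ and then specialize to $\varphi\equiv 1$, which directly produces $\<\pi_t,G\> - \<\gamma,G\> - \<\mb W_t,\nabla G\> = 0$ with the correct initial profile $\gamma$ built in. You also make explicit the upgrade from $G\in\mc C_0^2(\L)$, the class appearing in the hypothesis, to $G\in\mc C_0^1(\L)$, the class appearing in the definition \eqref{eq:4.5} of $\mf A_\g$; the paper leaves this step tacit, but it is needed and your argument (density of $\mc C_0^2(\L)$ in $\mc C_0^1(\L)$ for the $\mc C^1$-norm, plus continuity of $G\mapsto\<\pi_t-\gamma,G\>$ and $G\mapsto\<\mb W_t,\nabla G\>$ against finite signed measures) closes it correctly. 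So: same idea, cleaner execution, and one small gap in the paper filled.
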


\begin{proof}
Let $M>0$ be such that $\V_{(G,\varphi)}^{t,\g}(\mb W,\pi)\le M$,
for all $(G,\varphi)\in \mc C_0^2(\L)\times  \mc C^1([0,T])$, and $0\le t\le T$. 
Fix a function $G\in \mc C_0^2(\L)$ and $0\le t_1<t_2\le T$, we have
\begin{equation*}
\big\{\<\pi_{t_1}, G\> - \<\pi_{t_2} , G\>\big\} -\big\{\big\<{\mb W}_{t_1}, \nabla G\big\> - 
\big\<{\mb W}_{t_2} , \nabla G\big\>\big\}
\,\le  M\, \;.
\end{equation*} 
Applying this last inequality to the functions  $- G$ and then to $AG$ for positive number $A > 0$, we get,
$$
\Big|\big\{\<\pi_t, G\> - \<\pi_s , G\>\big\} -\big\{\big\<{\mb W}_t, \nabla G\big\> - \big\<{\mb W}_s ,\nabla G\big\>\big\}\Big|
\le \frac{M}{A}\, ,
$$
for all $A>0$. It remains to let $A\uparrow +\infty$.
\end{proof}

The following lemma allows to prove that we may set
the large deviations rate functional equal to $+\infty$ on the set of paths $(W,\pi)$, which do not belong to 
$\widetilde {\mf A}_\g$.

\begin{lemma}\label{t-u2}
Fix a sequence $\{\eta^N \in \Sigma_N : N\ge 1\}$ of configurations.
For any $(G,\varphi)\in \mc C_0^2(\L)\times  \mc C^1([0,T])$ and
any  $a >0$,  we have
\begin{equation*}
 \limsup_{N\to\infty} \frac 1{N^d}  \log
\bb E^{\b}_{\eta^N} \Big [ 
\exp\Big(a \, N^d \sup_{0\le t\le T} \V_{(G,\varphi)}^{t,\g}({\mb W}^N,\pi^N)\Big) \Big] 
\; \le \; 0\;.
\end{equation*}
\end{lemma}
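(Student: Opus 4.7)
The plan is to reduce $\V^{t,\g}_{(G,\varphi)}(\mb W^N,\pi^N)$, via the microscopic continuity equation, to the sum of a vanishing initial discrepancy (negligible under the association hypothesis \eqref{pfl}) and a discretization remainder $R^N_t(G)$ whose exponential moment is controlled using the Doob--Meyer decomposition of the currents together with the superexponential boundary estimate of Proposition~\ref{see} and the classical exponential inequality for pure jump martingales with small jumps.

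First, by summing the site--wise mass balance $\eta_t(x)-\eta_0(x)=\sum_i[W_t^{x-\vecte_i,x}-W_t^{x,x+\vecte_i}]-W_t^x\mathbf 1_{\{x\in\Gamma_N\}}$ against $N^{-d}G(x/N)$, using $G\equiv 0$ on $\Gamma$ and comparing with \eqref{cur-emp}, I obtain the exact identity
\begin{equation*}
 \langle \pi^N_t-\pi^N_0, G\rangle - \langle \mb W^N_t, \nabla G\rangle \;=\; R^N_t(G),
\end{equation*}
with
\begin{equation*}
 R^N_t(G):= \sum_{k=1}^d N^{-d-1}\!\!\!\sum_{x,x+\vecte_k\in\L_N}\!\!\!\bigl[\partial_k^N G-\partial_k G\bigr](x/N)\,W^{x,x+\vecte_k}_t \;-\; N^{-d-1}\!\sum_{x\in\Gamma_N}\!\partial_1 G(x/N)\,W^x_t.
\end{equation*}
A Stieltjes integration by parts in time against $\varphi$ then yields
\begin{equation*}
 \V^{t,\g}_{(G,\varphi)}(\mb W^N,\pi^N)\;=\;\langle \pi^N_0-\gamma, G\rangle\,\varphi(0)\;+\;R^N_t(G)\,\varphi(t)\;-\;\int_0^t R^N_s(G)\,\varphi'(s)\,ds,
\end{equation*}
so the problem reduces to estimating the exponential moment of $\sup_{t\in[0,T]}|R^N_t(G)|$.

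Writing each current as compensator plus compensated jump martingale, I split $R^N_t(G)=D^N_t(G)+M^N_t(G)$. For the bulk part of the drift $D^N_t(G)$, Taylor's formula gives $\partial_k^N G-\partial_k G=(2N)^{-1}\partial_k^2 G+o(1/N)$, and a discrete summation by parts in $x$ (as in the proof of Lemma~\ref{b1}, exploiting $G\vert_\Gamma=0$) absorbs the remaining factor $[\eta_s(x)-\eta_s(x+\vecte_k)]$ into a difference $[\partial_k^2 G(x/N)-\partial_k^2 G((x-\vecte_k)/N)]\eta_s(x)$ of size $o(1)$ by uniform continuity; the bulk drift is therefore uniformly $o(1)$ in $\eta$, contributing $e^{o(N^d)}$ to the exponential moment. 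The boundary part of $D^N_t(G)$ is, by direct computation, exactly $-\int_0^t \mc H^{\partial_1 G}_N(s,\eta_s)\,ds$ in the notation \eqref{vneb}, and is therefore handled by the second assertion of Proposition~\ref{see}, whose superexponential probability bound is converted into an exponential--moment bound by the standard dichotomy argument using the uniform bound $|\mc H^{\partial_1 G}_N|\le C$ and discretization of $\sup_t$ on a fine time grid. For the martingale $M^N_t(G)$, bulk and boundary jumps have size $O(N^{-d-2})$ and $O(N^{-d-1})$ respectively, with stochastic intensities $O(N^2)$ per edge and per boundary site; the classical exponential supermartingale inequality then yields
\begin{equation*}
 \bb E_{\eta^N}^\b\bigl[\exp\bigl(aN^d\,M^N_T(G)\bigr)\bigr]\;\le\;\exp\bigl(C\,a^2\,N^{d-1}\,T\bigr)\;=\;e^{o(N^d)},
\end{equation*}
since $aN^d$ times the per-jump size is $o(1)$, and Doob's $L^p$ inequality with some $p>1$ upgrades this to the supremum in $t\in[0,T]$. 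Finally H\"older's inequality with exponents tending to $1$ combines the three pieces.

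The main obstacle is the precise bookkeeping in the second step: the extra $N^{-1}$ factor in the per-jump size of $M^N_t(G)$, coming from $\partial_k^N G-\partial_k G=O(1/N)$, is exactly what turns the crude bound $e^{O(N^d)}$ into $e^{o(N^d)}$; correspondingly, it is essential to compare $\langle \mb W^N_t,\nabla G\rangle$ (the \emph{continuous} gradient) against the discrete mass balance, and to identify the boundary drift with $\mc H^{\partial_1 G}_N$ so that Proposition~\ref{see} can be invoked.
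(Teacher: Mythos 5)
Your proof is correct and rests on the same key ingredients as the paper's: after spatial summation by parts the two halves of $\V^{t,\g}_{(G,\varphi)}$ cancel to leading order, leaving a bulk integrand carrying the discretization error $\partial_k^N G-\partial_k G=O(N^{-1})$ and a boundary integrand $-\partial_1 G\,\varphi$; the boundary contribution is then recognized as $\int_0^t\mc H_N^{\partial_1 G\,\varphi}$ and handled by Proposition~\ref{see}, and Cauchy--Schwarz/H\"older splits the pieces. Where you depart from the paper is purely in the martingale bookkeeping: the paper feeds the whole stochastic integral through the Girsanov mean-one exponential martingales $\bb M_t^{\b,\frac{2a}{N}\mb V}$ and $\bb B_t^{b,2aH}$ of \eqref{girsanov}--\eqref{M-B}, then controls the supremum over $t$ by Doob's inequality and the compensator $R$ by the dichotomy over $E_{N,\d}^H$; you instead perform the Doob--Meyer decomposition of the currents into drift plus compensated martingale, show the bulk drift is $o(1)$ by one further summation by parts, identify the boundary drift with $-\int\mc H_N$, and bound the compensated martingale by the exponential supermartingale inequality for small-jump processes. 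These are two organizations of the same computation, and your size counting (bulk jump $O(N^{-d-2})$, boundary jump $O(N^{-d-1})$, intensity $O(N^2)$, so $\lambda^2\langle M\rangle_T=O(N^{d-1})$ with $\lambda=aN^d$ and $\lambda\cdot\max|\text{jump}|=o(1)$) matches what the Girsanov remainder gives. The paper's route has the minor advantage of not needing a separate quadratic-variation estimate, since the Girsanov $R$-term already packages drift and quadratic variation; your route has the advantage of making the $o(1)$ bulk drift and the r\^ole of Proposition~\ref{see} completely explicit. One small point, shared by both arguments: the initial displacement $\langle\pi_0^N-\gamma,G\rangle\varphi(0)$ must vanish, which requires the association hypothesis \eqref{pfl}; the Lemma's hypotheses do not state this explicitly, but it is in force in the context of Theorem~\ref{gd-(d,c)} where the Lemma is invoked, so you are right to flag it.
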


\begin{proof} The proof follows the general scheme used in \cite{bdgjl2}.
Notice however that in our context there are some additional difficulties due to the boundary terms.
Fix $(G,\varphi)\in \mc C_0^2(\L)\times  \mc C^1([0,T])$. For any 
time $s\in[0,T]$, we have the following microscopic relation
\begin{equation*} 
\eta_s(x) = \eta_0(x)+\sum_{j=2}^d\big( W_s^{x-e_j,x} - W_s^{x,x+e_j}\big)+
\begin{cases}
\displaystyle
 W_s^{x-e_1,x} - W_s^{x,x+e_1} \; \textrm{if} \ x\in \L_N\setminus\Gamma_N\, ,\\
\displaystyle
-W_s^{x,x+e_1}-W_s^x \quad \textrm{if} \ x\in \Gamma_N^-\, ,\\
\displaystyle
W_s^{x-e_1,x}-W_s^x \quad \textrm{if} \ x\in \Gamma_N^+\, .\\
\end{cases}
\end{equation*}

Since $G$ vanishes at the boundary $\Gamma$,
the classical spatial summations by parts and integrations by parts in time, permit to rewrite the two terms of 
$\V_{(G,\varphi)}^{t,\g}({\mb W}^N,\pi^N)$  as
\begin{equation*}
 \begin{aligned}
&\langle \pi_t , G\rangle\varphi(t) - \langle \pi_0 , G\rangle \varphi(0)
-\int_0^t ds \langle \pi_s , G\rangle  \varphi'(s)\\
&\qquad\qquad  \ \ \  \, =\, \frac{1}{N^{d+1}}\sum_{j=1}^d\sum_{x\in\L_N\setminus\Gamma_N^+} \int_{0}^t \partial_j^N G(x/N) \varphi(s) dW_s^{x,x+e_j} \, ,\\
&\langle \mb W_t , \nabla G \rangle \varphi(t) -
\int_0^t ds \langle \mb W_s , \nabla G\rangle \varphi'(s)\\
& \qquad \qquad \ \ \   \, =\, \frac{1}{N^{d+1}}
\sum_{j=1}^d\sum_{x\in\L_N\setminus\Gamma_N^+} \int_{0}^t \partial_{j} G(x/N) \varphi(s) dW_s^{x,x+e_j}\\
& \qquad \qquad \qquad \ \   \, +\,  \frac{1}{N^{d+1}}
\sum_{x\in\Gamma_N} \int_{0}^t \partial_{1} G(x/N) \varphi(s) dW_s^{x} \, ,
 \end{aligned}
\end{equation*}
where $\partial_{j}^N G(x/N)$ is the discrete derivative defined in \eqref{derive1} and $\partial_{j} G$ is the partial
derivative of the function $G$ in the direction $e_j$.
Let $H$ be the function given by $H(s,u)=-\partial_1 G(u)\varphi(s)$ and 
for $1\le j\le d$ and $N>1$, denote by $\mb V^N=(V_1^N,\cdots, V_d^N)$ the time dependent vector valued function defined by
$\mb V_j^N(s,u)= N\big[\partial_j^N G(u) -\partial_{j} G(u)\big]\varphi(s)$, we obtain
\begin{equation*}
\begin{aligned}
aN^d\V_{(G,\varphi)}^{t,\g}({\mb W}^N,\pi^N) & =\frac{a}{N^{2}}
\sum_{j=1}^d\sum_{x\in\L_N\setminus\Gamma_N^+} \int_{0}^t V_j^N(s,x/N)  dW_s^{x,x+e_j}\\
\ &\ \ \ \   +\frac{a}{N}
\sum_{x\in\Gamma_N} \int_{0}^t H(s,x/N) dW_s^{x} \, .
\end{aligned}
\end{equation*}
Thus by Cauchy-Schwarz inequality,
\begin{equation}\label{t-u4b}
\begin{aligned}
&\frac 1{N^d}  \log
\bb E^{\b}_{\eta^N} \Big [ 
\exp\Big(aN^d \sup_{0\le t\le T} \V_{(G,\varphi)}^{t,\g}({\mb W}^N,\pi^N)\Big) \Big] \\
&\ \; \le \;\frac 1{2N^d}  \log
\bb E^{\b}_{\eta^N} \Big [ 
\exp\Big(\frac{2a}{N^{2}}\sup_{0\le t\le T}
\sum_{j=1}^d\sum_{x\in\L_N\setminus\Gamma_N^+} \int_{0}^t V_j^N(s,x/N)  dW_s^{x,x+e_j}\Big) \Big] \\
&\ \ \ +  \;\frac 1{2N^d}  \log
\bb E^{\b}_{\eta^N} \Big [ 
\exp\Big(\frac{2a}{N}\sup_{0\le t\le T}
\sum_{x\in\Gamma_N} \int_{0}^t H(s,x/N) dW_s^{x}\Big) \Big] \,.
\end{aligned}
\end{equation}
Next, we control separately the two terms of the right hand side of \eqref{t-u4b} using 
the mean one exponential martingales ${\bb M}_t^{\b, \frac{2a}{N} {\mb V}}$ and ${\bb B}_t^{b, 2aH}$ defined
in the Girsanov formula \eqref{girsanov}: 
\begin{equation}\label{M-B}
\begin{aligned}
{\bb M}_t^{\b, \frac{2a}{N}{\mb V}} & \, =\, 
\exp\Big(\frac{2a}{N^{2}}
\sum_{j=1}^d\sum_{x\in\L_N\setminus\Gamma_N^+} \int_{0}^t V_j^N(s,x/N)  dW_s^{x,x+e_j}
\, -\, R_{0,t}^{\frac{2a}{N}{\mb V}}\Big)\, ,\\
{\bb B}_t^{b, 2aH} &\, =\, 
\exp\Big(\frac{2a}{N}
\sum_{x\in\Gamma_N} \int_{0}^t H(s,x/N) dW_s^{x} \, -\,  R_{b,t}^{2aH}\Big)\, ,
\end{aligned}
\end{equation}
where
\begin{equation*}
\begin{aligned}
R_{0,t}^{\frac{2a}{N}\mb V} &\, =\,
N^2\sum_{j=1}^d\sum_{x\in\L_N\setminus\Gamma_N^+}\int_0^t 
\Big\{[\eta_s(x)+ \eta_s(x+e_j)] C_{N}^{\b} \big(x,x+\vecte_j; \eta_s\big) \times \\
&\qquad\qquad\qquad\qquad\qquad\qquad\qquad\qquad
\Big( e^{-[\nabla^{x,x+\vecte_j}\eta_s(x)]\frac{2a}{N^2} V_j^N (s,x/N)}\, -\, 1\Big) 
\Big\} ds\\
R_{b,t}^{2aH} &\, = \, N^2\sum_{x\in\Gamma_N}\int_0^tr_x\big(b(x/N),\eta_s(x) \big)
\Big( e^{[2\eta_s(x)-1]\frac{2a}{N} H(s,x/N)}\, -\, 1\Big)ds \, .
\end{aligned}
\end{equation*}

We start by the boundary term which differs from the proof of \cite{bdgjl2}.
Recall from \eqref{vneb} the definition of ${\mathcal H}_{N}^H(s,\eta)$. Let $\delta >0$, and define the set
$$
E_{N,\delta}^{H}\,=\, \Big\{ \eta_\cdot \in D([0,T], {\Sigma}_N) : 
\Big| \int_0^T {\mc H}_N^{H}(t,\eta_t) dt \Big| \le \delta\Big\} \, .
$$
According to the definition of ${\bb B}_t^{b, 2aH}$ and
using inequality \eqref{maxlim}, we reduce the control of the second term of the right hand side of  
\eqref{t-u4b} to the following claims. 
For any $\delta>0$,
\begin{equation}\label{t-u6}
\limsup_{N\to \infty}\frac 1{2N^d}  \log
\bb E^{\b}_{\eta^N} \Big [ 
\exp\Big(\sup_{0\le t\le T}
\Big\{{\bb B}_t^{b, 2aH}\times \exp\big(R_{b,t}^{2aH}\big)\Big\} 1_{(E_{N,\delta}^{H})^c}\Big] = -\infty\,.
\end{equation}
and
\begin{equation}\label{t-u6b}
\limsup_{\delta\to 0}\limsup_{N\to \infty}\frac 1{2N^d}  \log
\bb E^{\b}_{\eta^N} \Big [ 
\exp\Big(\sup_{0\le t\le T}
\Big\{{\bb B}_t^{b, 2aH}\times \exp\big(R_{b,t}^{2aH}\big)\Big\} 1_{E_{N,\delta}^{H}}\Big] \le 0\,.
\end{equation}

By Schwartz inequality, the expression in the first limit 
is bounded above by
\begin{equation*}
\limsup_{N\to \infty}\frac 1{4N^d}  \log
\bb E^{\b}_{\eta^N} \Big [ \sup_{0\le t\le T}
\Big({\bb B}_t^{b, 2aH}\times \exp\big(R_{b,t}^{2aH}  \big)\Big)^2\Big]
\, +\,\limsup_{N\to \infty}\frac 1{4N^d}  \log
\bb P^{\b}_{\eta^N} \Big [(E_{N,\delta}^{H})^c\Big]\, .  
\end{equation*}
From Lemma \ref{see}, for any $\delta >0$, the second term in the last expression is equal to $-\infty$. 
Consider the first term.
Since $G\in \mc C_0^2(\L)$, a Taylor expansion shows that
$\displaystyle \sup_{0\le t\le T}|R_{b,t}^{2aH}|\le a(1+\frac{a}{N})C(H,T)$ for some constant $C(H,T)$
depending on $H$ and $T$. Moreover, we can write the martingale ${\bb B}_t^{b, 2aH}$, as 
\begin{equation}\label{t-u52}
\begin{aligned}
\bb B_t^{b, {2a}H } &\, =\,
\left(\bb B_t^{b, aH}\right)^{2}
\exp\Big(2R_{b,t}^{aH} - R_{b,t}^{2aH}\Big)\\
\ & \ \  \le \left(\bb B_t^{b, aH}\right)^{2} \exp\big\{a(1+\frac{a}{N})C(H,T)\big\}\, .
\end{aligned}
\end{equation}
Here and below $C(H,T)$  is a bounded constant depending on $H$ and $T$ whose value may change from line to line.
Therefore,
\begin{equation*}
 \begin{aligned}
 & \frac 1{4N^d}  \log
\bb E^{\b}_{\eta^N} \Big [ \sup_{0\le t\le T}
\Big({\bb B}_t^{b, 2aH}\times \exp\big(R_{b,t}^{2aH}  \big)\Big)^2\Big]\\
&\quad \le 
  a(1+\frac{a}{N})C(H,T)\, +\, 
  \frac 1{4N^d}  \log
\bb E^{\b}_{\eta^N} \Big [ \sup_{0\le t\le T}
\Big({\bb B}_t^{b, 2aH}\Big)^2\Big]\, .
 \end{aligned}
\end{equation*}
Since $\big({\bb B}_t^{b, 2aH}\big)_{t\in [0,T]}$ is a positive martingale equal to $1$ at time $0$, by
Doob's inequality (cf. Proposition 2.16. in \cite{ek}), the last expression in bounded above by
\begin{equation}\label{t-u5b2}
\begin{aligned}
&a(1+\frac{a}{N})C(H,T)\, +\, \frac 1{4N^d}  \log  
\bb E^{\b}_{\eta^N} \Big [4 \Big( {\bb B}_T^{b, 2aH}\Big)^2\Big]\\
&\ \ \ \le a(1+\frac{a}{N})C(H,T)\, +\, \frac 1{4N^d}  \log  
\bb E^{\b}_{\eta^N} \Big[  {\bb B}_T^{b, 4aH} \Big]
\, = \,a(1+\frac{a}{N})C(H,T)\, ,
\end{aligned}
\end{equation}
where we have used again the identity \eqref{t-u52}. 
This concludes the proof of \eqref{t-u6}.

On the other hand, a Taylor expansion shows that on the set $E_{N,\delta}^{H}$, for any $0\le t\le T$, we have
$$
|R_{b,t}^{2aH} | \le N^da\big( \delta +\frac{a}N C(H)\big)\, ,
$$
for some positive constant $C(H)$. We then check the limit \eqref{t-u6b}
by using again the same arguments as in \eqref{t-u52}, \eqref{t-u5b2} and letting $N\uparrow \infty$ then $\d\downarrow 0$.

We now consider the first term of the right hand side of \eqref{t-u4b}. 
Since $G\in \mc C_0^2(\L)$, Lemma \ref{b1}, a Taylor expansion and a
summation by parts allow to show that for any $0\le t\le T$,
\begin{eqnarray*}
\begin{aligned}
R_{0,t}^{\b, \frac{2a}{N}\mb V}&\le 
a o_{\mb V} (1)  \sum_{j=1}^d\sum_{x\in\L_N} \int_0^T dt\, \eta_t(x) +{a\b T}N^{d-1}C(\mb V)
+t{a^2}N^{d-2}C(\mb V,\b) \\
&\ \ \le \, a\Big\{ o_{\mb V} (1) + \frac{\b}N C(\mb V) +\frac{a}{N^2}C(\mb V,\b)  \Big\}  N^d T\, ,
\end{aligned}
\end{eqnarray*}
where $o_{\mb V} (1)$ is an expression depending on $\mb V$ which
vanishes as $N\uparrow\infty$.
It remains to apply again the same arguments as in \eqref{t-u52}, \eqref{t-u5b2} for the martingale 
$\bb M_t^{\b, \frac{2a}{N}{\mb V} }$:
\begin{equation*}
\begin{aligned}
\bb M_t^{\b, \frac{2a}{N}{\mb V} } &\, =\,
\left(\bb M_t^{\b, \frac{a}{N}{\mb V}}\right)^{2}
\exp\Big(2R_t^{\b, \frac{a}{N}\mb V} - R_t^{\b, \frac{2a}{N}\mb V}\Big)\\
& \ \  \le \left(\bb M_t^{\b, \frac{a}{N}\mb V}\right)^{2} e^{ N^d r_N (\mb V,a,T)}\, ,
\end{aligned}
\end{equation*}
where $r_N{(\mb V,a,T)}$  stands for an expression depending on $\mb V, a$ and $T$ which vanishes as $N\uparrow\infty$.
\end{proof}

\medskip
\subsection{The energy estimate $\mc Q$.} In this subsection, we state an energy estimate which is one of the main ingredients 
in the proof of large deviations and also in the proof of hydrodynamic limit.
 For $\pi\in D\big( [0,T], \mc F^1\big)$, with $\pi_t(du)=\rho_t(u)du$, $0\le t\le T$,
$\delta >0$, $1\le i\le d$, and $H\in \mc C^\infty_c ([0,T]\times \L)$ 
define 
 \begin{equation}\label{ccc}
\tq^\d_{i,H}(\pi)\;=\;
\int_0^T dt\<\pi_t, \partial_i H_t \> -\d \int_0^T dt\<\sigma(\rho_t) H_t, H_t \>  \,,
\end{equation}
\begin{equation*}\label{cccb}
\tq^\d_i (\pi)\; =\;  \sup_{H\in C^\infty_c([0,T]\times \L)} \Big\{\tq^\d_{i,H}(\pi)\Big\}\; ,
\end{equation*}

Notice that
$$
{\mc Q} (\pi)\, =\, \frac{\d}2\sum_{i=1}^d
\tq^\d_i (\pi)\,  ,
$$
where  ${\mc Q} (\cdot)$   is defined   in \eqref {1:Q}. We shall denote  ${\mc Q}_i =\tq_i^{2}$,
so that ${\mc Q} =\sum_{i=1}^d {\mc Q}_i$.

 For each $\varepsilon> 0$ and $\pi$ in $\mc M$, denote by
$\Xi_\varepsilon (\pi) = \pi^{\varepsilon}$ the absolutely continuous
measure obtained by smoothing the measure $\pi$:
\begin{equation*}
\Xi_\varepsilon (\pi) (du) \;=\; \pi^{\varepsilon} (du) \;=\; 
\frac 1{\kappa_\varepsilon} \frac {\pi(\bs \Lambda_\varepsilon(u))}
{|\bs \Lambda_\varepsilon(u)|} \,\, du\;,
\end{equation*}
where $\bs \Lambda_\varepsilon(x)$  is defined in \eqref{average}, $|A|$ stands for the Lebesgue measure of the set $A$,
and $\{\kappa_\varepsilon : \varepsilon >0\}$ is a strictly decreasing
sequence converging to $1$. 
Denote 
\begin{equation*}
\pi^{N,\varepsilon} \;=\; \Xi_\varepsilon (\pi^N)\;,
\end{equation*}
and notice that for $N$ sufficiently large
$\pi^{N,\varepsilon}$ belongs to $\mc F^1$ 
because $\kappa_\varepsilon >1$. Moreover, for
any $G\in \mc C^0 (\L)$,
\begin{equation*}
\<\pi^{N,\varepsilon}, G\> \;=\; \frac 1{N^d} \sum_{x\in
  \L_N} G(x/N) \eta^{\varepsilon N}(x) \; +\; O(N, \varepsilon)\;,
\end{equation*}
where $O(N, \varepsilon)$ is absolutely bounded by $C \{ N^{-1} +
\varepsilon\}$ for some finite constant $C$ depending only on $G$.

\begin{lemma}
\label{hs02}  Fix a sequence $\{\eta^N \in \Sigma_N : N\ge 1\}$ of configurations and $H\in\mc C^{\infty}_c([0,T]\times \L)$.
There exists a positive constant $C_1$  depending only on $b(\cdot)$ and $\b$ so that   for any given  $\d_0 >0$,  
for  any $\delta$,  $0\le \delta\le \delta_0$ and any $1\le i\le d$, we have  
\begin{equation*}
\limsup_{\epsilon \to 0} \limsup_{N\to\infty} \frac 1{N^d}  \log
\bb E^{\b}_{\eta^N} \Big [ 
\exp\Big(\delta \, N^d \tq_{i,H}^{\d_0}(\pi^{N,\epsilon}\big)\Big) \Big] 
\; \le \; C_1( T+1)\;.
\end{equation*}
\end{lemma}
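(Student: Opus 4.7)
My plan is to carry out a direct Feynman--Kac / Dirichlet form energy estimate in the spirit of \cite{kl,blm,qrv}. The first step is the standard reduction: pick a smooth profile $\theta:\overline{\L}\to(0,1)$ bounded away from $0$ and $1$ with $\theta|_\G=b$, write $\nu^N=\nu^N_{\theta(\cdot)}$, and observe that $\log[1/\nu^N(\eta^N)]\le C(b,\theta)N^d$ uniformly in $\eta^N$. The entropy inequality $\Pb^\b_{\eta^N}(\,\cdot\,)\le \nu^N(\eta^N)^{-1}\Pb^\b_{\nu^N}(\,\cdot\,)$ combined with the Feynman--Kac formula applied to the time-dependent function $V_t(\eta):=\langle\pi^{N,\epsilon}(\eta),\partial_iH_t\rangle-\d_0\langle\sigma(\rho^{N,\epsilon}(\eta))H_t,H_t\rangle$ gives
$$\frac{1}{N^d}\log\Es^\b_{\eta^N}\!\Big[\exp\!\big(\d N^d\,\tq^{\d_0}_{i,H}(\pi^{N,\epsilon})\big)\Big]\;\le\;C(b,\theta)\,+\,\int_0^T\!\Lambda_N^\d(t)\,dt,$$
with $\Lambda_N^\d(t)=\sup_f\!\big\{\d\int\! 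V_t f\,d\nu^N+N^{-d}\langle\sqrt f,\mf L_N\sqrt f\rangle_{\nu^N}\big\}$, the supremum being taken over probability densities $f$ with respect to $\nu^N$.

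The second step controls $\Lambda_N^\d(t)$ uniformly in $H$ and $t$. Lemma~\ref{dirichlet} with $a=1/2$ yields $N^{-d}\langle\sqrt f,\mf L_N\sqrt f\rangle_{\nu^N}\le-\tfrac{N^{2-d}}{2}\big[\cd_{0,N}(\sqrt f,\nu^N)+\cd_{b,N}(\sqrt f,\nu^N)\big]+C'(b,\b,\theta)$. A discrete summation by parts rewrites the bulk part of $\d\int V_t f\,d\nu^N$ as $-\d N^{1-d}\sum_y H_t^\epsilon(y/N)\!\int[\eta(y)-\eta(y-e_i)]f\,d\nu^N$, where $H_t^\epsilon$ is the $\L_{\epsilon N}$-average of $H_t$. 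Since the involution $\sigma^{y-e_i,y}$ has Radon--Nikodym factor $1+O(1/N)$ with respect to $\nu^N$ (because $\theta$ is smooth and bounded away from $0,1$), the symmetrization identity
$$\!\int\![\eta(y)-\eta(y-e_i)]f\,d\nu^N=\tfrac12\!\int\![\eta(y)-\eta(y-e_i)]\big(\sqrt f-\sqrt{f\circ\sigma^{y-e_i,y}}\big)\big(\sqrt f+\sqrt{f\circ\sigma^{y-e_i,y}}\big)\,d\nu^N+O(N^{-1})$$
together with Cauchy--Schwarz and AM--GM with the edge-dependent parameter $\alpha_y=N/(\d|H_t^\epsilon(y/N)|)$ produce the bound
$$\d\!\int\!\!\langle\pi^{N,\epsilon},\partial_iH_t\rangle f\,d\nu^N\;\le\;\tfrac{N^{2-d}}{4}\cd_{0,N}(\sqrt f,\nu^N)\;+\;C_\star\d^2 N^{-d}\!\sum_y H_t^\epsilon(y/N)^2\!\int[\eta(y)-\eta(y-e_i)]^2 f\,d\nu^N+o(1),$$
for a constant $C_\star$ issuing from the RN correction and the elementary inequality $(\sqrt f+\sqrt{f\circ\sigma})^2\le 2(f+f\circ\sigma)$.

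The third step is a variational (density-against-$f\,d\nu^N$) one-block estimate, an $f$-indexed counterpart of Proposition~\ref{see}, which replaces $[\eta(y)-\eta(y-e_i)]^2$ by $\sigma(\eta^{\epsilon N}(y))$ up to a vanishing $\cd_{0,N}$-error. Combining everything, the $\tfrac{N^{2-d}}{4}\cd_{0,N}$ term is absorbed by the $-\tfrac{N^{2-d}}{2}\cd_{0,N}$ available from the generator (with a margin that absorbs the replacement error as $\epsilon\to 0$), and what remains is
$$\Lambda_N^\d(t)\;\le\;\sup_f \d(C_\star\d-\d_0)\!\int\!\langle\sigma(\rho^{N,\epsilon})H_t,H_t\rangle f\,d\nu^N\;+\;C''(b,\b,\theta)\;+\;o(1).$$
By tuning $\alpha_y$ (and the fraction of the Dirichlet budget that the AM--GM consumes) so that $C_\star\le 1$, the hypothesis $\d\le\d_0$ forces the quadratic-in-$H$ coefficient to be non-positive, so the term drops from the supremum and $\Lambda_N^\d(t)\le C''$ uniformly in $H$. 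Integrating on $[0,T]$ and reintroducing the entropy constant $C(b,\theta)$ produces the claimed bound $C_1(T+1)$.

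The principal obstacle is the sharp accounting in the AM--GM/replacement step: the three sources contributing to $C_\star$ (Radon--Nikodym correction along $\sigma^{y-e_i,y}$, the crude bound on $(\sqrt f+\sqrt{f\circ\sigma})^2$, and the Dirichlet-form remainder in the one-block replacement) must all be tracked with just enough slack that the threshold at which the quadratic in $H$ can be swallowed by the built-in penalty coincides with the assumed inequality $\d\le\d_0$; this is exactly what makes $C_1$ independent of the test function $H$, which is the whole point of the lemma. A secondary technical issue is that Proposition~\ref{see} is stated probabilistically, so its application here requires the corresponding variational form, applied to the quadratic form $\int[\eta(y)-\eta(y-e_i)]^2 f\,d\nu^N$ rather than to a single trajectory; this is a standard adaptation but must be justified.
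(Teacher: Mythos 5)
Your overall scheme — entropy inequality against the reference product measure $\nu^N_{\theta(\cdot)}$, Feynman--Kac variational bound, the Dirichlet estimate of Lemma~\ref{dirichlet}, symmetrization and edge-wise AM--GM, then a variational (against $f\,d\nu^N$) one-block replacement — is exactly the standard proof of this type of energy estimate, and it is surely what Lemma~3.8 of \cite{mo3} does, to which the paper defers. The two technical caveats you flag at the end (tracking $C_\star$ and restating Proposition~\ref{see} variationally) are indeed the right ones to worry about. So in structure the proposal is correct.

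There is, however, a concrete accounting slip in the AM--GM step that undermines the claim $C_\star\le 1$ \emph{and} ``a margin for the replacement.'' Recall from \eqref{dir-form2} that $\cd_{0,N}(\sqrt f,\nu^N)=\tfrac12\sum_{j,\,\mathrm{edges}}\int\big(\sqrt f-\sqrt{f\circ\sigma}\big)^2\,d\nu^N$. With your choice $\alpha_y=N/(\delta|H^\epsilon_t(y/N)|)$ the Dirichlet piece coming out of AM--GM in direction $i$ is
$\sum_y \tfrac{\delta|H^\epsilon_t|\alpha_y}{4N^{d-1}}\int(\sqrt f-\sqrt{f\circ\sigma^{y-e_i,y}})^2\,d\nu^N
=\tfrac{N^{2-d}}{4}\sum_y\int(\cdots)^2
=\tfrac{N^{2-d}}{2}\,\cd_{0,N}^{(i)}(\sqrt f,\nu^N)$,
where $\cd_{0,N}^{(i)}$ is the direction-$i$ part. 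This is $\tfrac{N^{2-d}}{2}\cd_{0,N}$ for densities $f$ that vary only in direction $i$, not $\tfrac{N^{2-d}}{4}\cd_{0,N}$ as written; the extra factor $2$ you lost is the $\tfrac12$ in \eqref{dir-form2}. Consequently, with this $\alpha_y$ the AM--GM consumes the \emph{entire} budget $\tfrac{N^{2-d}}{2}\cd_{0,N}$ supplied by Lemma~\ref{dirichlet} (with $a=1/2$), and there is nothing left over to pay for the variational one-block replacement, which genuinely requires a positive multiple of the full $\cd_{0,N}$. If instead you shrink $\alpha_y$ to $\lambda N/(\delta|H^\epsilon_t|)$ for some $\lambda<1$ to create that margin, the quadratic coefficient becomes $\delta^2/\lambda$, so the hypothesis needed to drop the term is $\delta\le\lambda\delta_0<\delta_0$, not $\delta\le\delta_0$. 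As written, then, the proposal proves the bound for $\delta<\delta_0$ but does not close the endpoint $\delta=\delta_0$; you cannot simultaneously have $C_\star\le 1$ and slack for the replacement. (This is harmless for the paper's use of the lemma, where one can always give up a small fraction, but it is a real gap relative to the stated ``$0\le\delta\le\delta_0$.'') You should either prove the strict-inequality version and note it suffices for Corollary~\ref{hs03} and Lemma~\ref{mf01}, or supply an additional argument (e.g.\ a limiting/monotonicity argument in $\delta_0$, or a sharper treatment of the one-block step) to recover the endpoint.

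Two smaller remarks. First, the $O(N^{-1})$ Radon--Nikodym correction and the inequality $(\sqrt f+\sqrt{f\circ\sigma})^2\le 2(f+f\circ\sigma)$ together contribute a net factor that tends to $1$ (as you note, $C_\star\to1$), so these are not the obstruction; the obstruction is the Dirichlet budget above. Second, when converting Proposition~\ref{see} to its variational analogue, it is cleaner to prove the one-block/two-block estimates directly in variational form for the specific local function $\Psi_i=[\eta(e_i)-\eta(0)]^2$ tested against $f\,d\nu^N_{\theta(\cdot)}$, as in \cite{kl,blm}, rather than attempting to re-derive it from the probabilistic statement; you should flag that the constants in that estimate also depend on the $\lambda$-fraction of $\cd_{0,N}$ you retain.
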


The proof of this Lemma is similar to the one of Lemma 3.8. in \cite{mo3}, and therefore is omitted.

\begin{corollary}
\label{hs03}  
Fix a sequence $\{\eta^N \in \Sigma_N : N\ge 1\}$ of configurations and $H\in\mc C^{\infty}_c([0,T]\times \L)$.
There exists a positive constant $C_1$  depending only on $b(\cdot)$ and $\b$ so that   for any given  $\d_0 >0$,  
for  any $\delta$,  $0\le \delta\le \delta_0$,
\begin{equation}\label{aaa}
\limsup_{\epsilon \to 0} \limsup_{N\to\infty} \frac 1{N^d}  \log
\bb E^{\b}_{\eta^N} \Big [ 
\exp\Big(\delta \, N^d \sup_{1\le i\le d}\tq_{i,H}^{\d_0}(\pi^{N,\epsilon}\big)\Big) \Big] 
\; \le \; C_1( T+1)\;.
\end{equation}
\end{corollary}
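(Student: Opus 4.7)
The plan is to reduce the estimate with the supremum inside the exponential to the per-index estimate already provided by Lemma~\ref{hs02}, using only the elementary inequality $\exp(\max_i a_i) \le \sum_i \exp(a_i)$ for real numbers $a_1,\dots,a_d$.

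More precisely, I would first write
\begin{equation*}
\exp\!\Big(\delta N^d \sup_{1\le i\le d}\tq_{i,H}^{\d_0}(\pi^{N,\epsilon})\Big)
\;\le\; \sum_{i=1}^d \exp\!\Big(\delta N^d \,\tq_{i,H}^{\d_0}(\pi^{N,\epsilon})\Big),
\end{equation*}
take $\bb E^{\b}_{\eta^N}$ on both sides, and then apply $\log\sum_{i=1}^d a_i \le \log d + \max_{1\le i\le d} \log a_i$ for positive $a_i$. Dividing by $N^d$ yields
\begin{equation*}
\frac 1{N^d}\log \bb E^{\b}_{\eta^N}\!\Big[\exp\!\Big(\delta N^d \sup_{1\le i\le d}\tq_{i,H}^{\d_0}(\pi^{N,\epsilon})\Big)\Big]
\;\le\; \frac{\log d}{N^d} + \max_{1\le i\le d}\frac 1{N^d}\log \bb E^{\b}_{\eta^N}\!\Big[\exp\!\Big(\delta N^d \tq_{i,H}^{\d_0}(\pi^{N,\epsilon})\Big)\Big].
\end{equation*}

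Second, I would pass to the limit $\limsup_{N\to\infty}$ (the $N$-independent term $N^{-d}\log d$ vanishes) and then to $\limsup_{\epsilon\to 0}$. Since the bound of Lemma~\ref{hs02} is uniform in $i\in\{1,\dots,d\}$, the maximum over $i$ of the $\limsup$'s is still bounded by $C_1(T+1)$, giving \eqref{aaa}. There is no substantive obstacle here: the only point to be careful about is that the double $\limsup$ can be exchanged with the finite maximum over $i$, which is immediate because the maximum of a finite number of $\limsup$'s equals the $\limsup$ of the maximum for finite index sets. This completes the proof of the corollary.
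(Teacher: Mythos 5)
Your proof is correct and takes essentially the same approach as the paper: the paper applies the inequality \eqref{maxlim} (the $\limsup\frac{1}{N^d}\log$ of a finite sum is bounded by the max of the individual $\limsup$'s) directly to $\sum_{i=1}^d \bb E^{\b}_{\eta^N}\big[\exp(\delta N^d \tq_{i,H}^{\d_0}(\pi^{N,\epsilon}))\big]$ after using $\exp(\sup_i a_i)\le\sum_i\exp(a_i)$, and then invokes Lemma~\ref{hs02}. You simply unpack the $\limsup\log\sum$ step explicitly via $\log\sum a_i \le \log d + \max_i\log a_i$ and let $N^{-d}\log d\to 0$, which is the same computation.
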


\begin{proof}
From the following inequality
\begin{equation}
\label{maxlim}
\limsup_{N\to\infty} \frac{1}{N^d}\log(a_N+b_N) 
\leq \max\left\{\limsup_{N\to\infty} 
\frac{1}{N^d}\log a_N,\limsup_{N\to\infty} \frac{1}{N^d}\log b_N\right\}\, ,
\end{equation}
the limit in \eqref{aaa} is bounded above by
$$
\max_{1\le i\le d}\Big\{
\limsup_{\epsilon \to 0} \limsup_{N\to\infty} \frac 1{N^d}  \log
\bb E^{\b}_{\eta^N} \Big [ 
\exp\Big(\delta \, N^d \tq_{i,G}^{\d_0}(\pi^{N,\epsilon}\big)\Big) \Big] .   
$$
By  Lemma \ref{hs02} the thesis follows. 
\end{proof}

\medskip
\subsection{The functional $\mc E^\gamma$.} 
For $(G,\varphi)\in \mc C_0^2(\L)\times  \mc C^1([0,T])$ and
$H\in \mc C^\infty_c([0,T]\times \L)$, denote by ${\mf E}_{(G,\varphi)}^{\g,H}$ the functional
\begin{equation}\label{mfe}
{\mf E}_{(G,\varphi)}^{\g,H}(\mb W,\pi) =\sup_{1\le i\le d}\big\{ {\mc Q}_{i,H}(\pi)\big\} 
+\sup_{0\le t\le T}\big\{\V_{(G,\varphi)}^{t,\g}(\mb W,\pi)\big\}\, ,
\end{equation}
where ${\mc Q}_{i,H}(\pi)={\tilde{\mc Q}}_{i,H}^{2}(\pi)$ with $\d_0=2$, and 
$\V_{(G,\varphi)}^{t,\g}$ are defined in  \eqref{ccc} and \eqref{eq:4.5b}. 

Next lemma shows that we may set
the large deviations rate functional equal to $+\infty$ on the set of paths $(W,\pi)$ which do not belong to 
$\big\{ (\mb W,\pi)\ :\ {\mc E}^\gamma(\mb W,\pi) <+\infty \big\}$.

\begin{lemma}
\label{mf01}  Fix a sequence $\{\eta^N \in \Sigma_N : N\ge 1\}$ of configurations, 
$(G,\varphi)\in \mc C_0^2(\L)\times  \mc C^1([0,T])$ and $H\in \mc C^\infty_c([0,T]\times \L)$.
There exists a positive constant $C_2$  depending only on $b(\cdot)$ and $\b$ so that,
for  any $0\le \delta\le 1$ and any $1\le i\le d$, 
\begin{equation*}
\limsup_{\epsilon \to 0} \limsup_{N\to\infty} \frac 1{N^d}  \log
\bb E^{\b}_{\eta^N} \Big [ 
\exp\Big(\delta \, N^d {\mf E}_{(G,\varphi)}^{\g,H}(\mb W^N,\pi^{N,\epsilon})\Big) \Big] 
\; \le \; C_2( T+1)\;.
\end{equation*}
\end{lemma}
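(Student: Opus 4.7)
The plan is to decouple the two suprema that make up ${\mf E}_{(G,\varphi)}^{\g,H}$ by means of the elementary inequality $e^{a+b}\le \tfrac12 e^{2a}+\tfrac12 e^{2b}$, and then invoke the two exponential estimates already at our disposal: Corollary \ref{hs03} for the energy part $\sup_{1\le i\le d}{\mc Q}_{i,H}$, and Lemma \ref{t-u2} for the continuity-equation part $\sup_{0\le t\le T}\V_{(G,\varphi)}^{t,\g}$.

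Concretely, first I would write
\begin{equation*}
\exp\Big(\delta N^d {\mf E}_{(G,\varphi)}^{\g,H}(\mb W^N,\pi^{N,\epsilon})\Big)
\;\le\;\tfrac12 A_N + \tfrac12 B_N,
\end{equation*}
where
\begin{equation*}
A_N = \exp\Big(2\delta N^d \sup_{1\le i\le d}{\mc Q}_{i,H}(\pi^{N,\epsilon})\Big),\qquad
B_N = \exp\Big(2\delta N^d \sup_{0\le t\le T}\V_{(G,\varphi)}^{t,\g}(\mb W^N,\pi^{N,\epsilon})\Big).
\end{equation*}
Then by inequality \eqref{maxlim}, the quantity we have to control is bounded by the maximum of the two $\limsup$s of $\tfrac{1}{N^d}\log \bb E^\b_{\eta^N}[A_N]$ and $\tfrac{1}{N^d}\log\bb E^\b_{\eta^N}[B_N]$.

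For the $A_N$ term, recall that ${\mc Q}_{i,H}=\tq_{i,H}^{2}$, so the parameter $\d_0$ in Corollary \ref{hs03} equals $2$. Since $0\le \delta\le 1$, the coefficient $2\delta$ lies in $[0,\d_0]$, and Corollary \ref{hs03} gives
\begin{equation*}
\limsup_{\epsilon\to 0}\limsup_{N\to\infty}\frac 1{N^d}\log \bb E^\b_{\eta^N}[A_N]\;\le\; C_1(T+1).
\end{equation*}
For the $B_N$ term, I would apply Lemma \ref{t-u2} directly with $a=2\delta$ (any positive $a$ is allowed there): one obtains
\begin{equation*}
\limsup_{\epsilon\to 0}\limsup_{N\to\infty}\frac 1{N^d}\log \bb E^\b_{\eta^N}[B_N]\;\le\; 0.
\end{equation*}
Combining these bounds via \eqref{maxlim} (and absorbing the harmless factor $\tfrac12$ into the logarithm, which disappears after dividing by $N^d$) yields the claim with $C_2:=C_1$.

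There is no real obstacle: both building blocks are already proved. The only point requiring a little care is the compatibility of constants — one must ensure that the constant $\d_0$ chosen in the definition of ${\mc Q}_{i,H}$ (namely $\d_0=2$) is large enough to accommodate the factor $2\delta$ that appears after the splitting inequality, which is guaranteed by the hypothesis $\delta\le 1$. The rest is simply bookkeeping: pick $C_2=C_1$ (or $\max(C_1,0)$) and note that Lemma \ref{t-u2} is uniform in $(G,\varphi)$ for our purposes.
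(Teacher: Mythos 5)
Your proof is correct and takes essentially the same approach as the paper: both decouple the two suprema by doubling the coefficient to $2\delta$ (staying within the range $\delta_0=2$ thanks to $\delta\le 1$) and then invoke Corollary~\ref{hs03} and Lemma~\ref{t-u2}. The only cosmetic difference is that you use the convexity bound $e^{a+b}\le\tfrac12 e^{2a}+\tfrac12 e^{2b}$ followed by \eqref{maxlim}, whereas the paper applies Cauchy--Schwarz to the expectation $\bb E^\b_{\eta^N}[e^{A}e^{B}]\le\bb E^\b_{\eta^N}[e^{2A}]^{1/2}\bb E^\b_{\eta^N}[e^{2B}]^{1/2}$; both yield a bound of the form $C_2(T+1)$.
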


\begin{proof} By Schwarz inequality,
\begin{equation*}
\begin{aligned}
&\frac 1{N^d}  \log
\bb E^{\b}_{\eta^N} \Big [ 
\exp\Big(\delta \, N^d {\mf E}_{(G,\varphi)}^{\g,H}(\mb W^N,\pi^{N,\epsilon})\Big) \Big] \\
&\qquad \  \; \le \;\frac 1{2N^d}  \log
\bb E^{\b}_{\eta^N} \Big [ 
\exp\Big(2\delta \, N^d \sup_{0\le t\le T}\big\{\V_{(G,\varphi)}^{t,\g}(\mb W^N,\pi^{N,\epsilon})\Big) \Big] \\
&\qquad \ \   \ +  \;\frac 1{2N^d}  \log
\bb E^{\b}_{\eta^N} \Big [ 
\exp\Big(2\delta \, N^d \sup_{1\le i\le d}\big\{ {\mc Q}_{i,H}(\pi^{N,\varepsilon})\big\}\Big) \Big] \,.
\end{aligned}
\end{equation*}
The result is an imediate consequence of Lemma \ref{t-u2} and of Corollary \ref{hs03}.
\end{proof}

\bigskip
\subsection{Upper bound}
In this section we investigate the upper bound of the large deviations principle for compact sets and then for closed sets
of the couple $(\mb W^N,\pi^N)$ on the topological space $D([0,T),\mc M^{d+1} )$. 
We follow the strategy of \cite{mo3}, relying
on some properties of the rate function that we proved in the last subsections.
Notice however that in the present case
the proof is slightly more demanding due to the definition of the energy functional $\mc E^\g$.
We first prove an upper bound with an auxiliary rate functional.

Recall from \eqref{mfe} the definition of ${\mf E}_{(G,\varphi)}^{\g,H}$. We introduce the
functional ${\mf E}^\g:D([0,T],\mc M^{d}\times {\mc F}^1)\to [0,+\infty]$ defined by
\begin{equation}\label{mce0}
{\mf E}^\g(\mb W,\pi)\, =\,
\sup_{G,\varphi,H}\Big\{{\mf E}_{(G,\varphi)}^{\g,H}(\mb W,\pi)\Big\}\, ,
\end{equation}
where the supremum is carried over all 
$(G,\varphi, H)\in \mc C_0^2(\L)\times  \mc C^1([0,T])\times \mc C^\infty_c([0,T]\times \L) $.
Notice that ${\mf E}^\g(\mb W,\pi)<+\infty$ if and only if $\mc E^\g (\mb W,\pi)< +\infty$.

For each $0\le a\le 1$,
let ${\mf F}_a:D([0,T],\mc M^{d+1})\to [0,+\infty]$ be the functional given by
\begin{equation*}
{\mf F}_a(\mb W,\pi)  =
\begin{cases}
  {\J}_T(\mb W,\pi) +a \, {\mf E}^\g(\mb W,\pi) \ & \hbox{ if } \ \ 
      D([0,T],\mc M^{d}\times {\mc F}^1)\, ,\\ 
+\infty & \hbox{ otherwise .}
\end{cases}
\end{equation*}

\begin{proposition}
\label{up1}
Let  ${\mc K}$ be a compact set of $D([0,T],\mc M^{d+1})$. 
There exists a positive constants $C_2$, such that for any $0< a\le 1$,
\begin{equation*}
\limsup_{N\to\infty} \frac 1{N^d}  \log
 Q^{\b}_{\eta^N} ({\mc K})\;\le\; -\frac1{1+a}\inf_{(W,\pi)\in {\mc K}}
 {\mf F}_a(\mb W,\pi) \, +\, \frac{a}{1+a} C_2 (T+1)\; .
\end{equation*}
\end{proposition}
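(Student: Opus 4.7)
Proof plan. The strategy is to combine exponential Chebyshev, Hölder's inequality, and a Girsanov/replacement argument for a single choice of test functions, and then to upgrade to the full suprema defining $\mf F_a$ via a covering argument on the compact set $\mc K$. Fix test functions $\mb V \in (\mc C^{1,1}([0,T] \times \L))^d$, $(G, \varphi) \in \mc C_0^2(\L) \times \mc C^1([0,T])$, $H \in \mc C_c^\infty([0,T] \times \L)$ and $\epsilon > 0$, and write for brevity $\Psi_\epsilon(\mb W, \pi) := \widehat{\J}_{\mb V}^T(\mb W, \Xi_\epsilon\pi) + a\, \mf E_{(G,\varphi)}^{\g, H}(\mb W, \Xi_\epsilon\pi)$. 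I would begin with the exponential Chebyshev inequality at scale $1/(1+a)$:
\begin{equation*}
Q^\b_{\eta^N}[\mc K] \,\le\, \exp\Big(-\tfrac{N^d}{1+a} \inf_{(\mb W, \pi) \in \mc K} \Psi_\epsilon(\mb W, \pi)\Big)\, \E^\b_{\eta^N}\Big[\exp\Big(\tfrac{N^d}{1+a} \Psi_\epsilon(\mb W^N, \pi^N)\Big)\Big]\, ,
\end{equation*}
followed by Hölder's inequality with conjugate exponents $p = 1+a$ and $q = (1+a)/a$, which factors the expectation as $\E^\b[\exp(N^d \widehat{\J}_{\mb V}^T(\mb W^N, \pi^{N,\epsilon}))]^{1/(1+a)}\, \E^\b[\exp(N^d \mf E_{(G,\varphi)}^{\g,H}(\mb W^N, \pi^{N,\epsilon}))]^{a/(1+a)}$.

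The energy factor is handled directly by Lemma \ref{mf01} applied at $\delta = 1$, and contributes at most $\frac{a}{1+a}C_2(T+1)$ after taking $\frac{1}{N^d}\log$ and $\limsup_{\epsilon,N}$. For the remaining factor I would invoke the Girsanov formula \eqref{girsanov}: via the mean-one exponential martingale $\M_T^{\b, \mb V}$, the Taylor expansion of the perturbed rates (Lemma \ref{b1}), discrete integrations by parts in space and time, and the superexponential replacement estimate (Proposition \ref{see}), one rewrites $N^{-d}\log \M_T^{\b, \mb V} = \widehat{\J}_{\mb V}^T(\mb W^N, \pi^{N, \epsilon}) + R_{N,\epsilon}^{\mb V}$, where $R_{N,\epsilon}^{\mb V}$ vanishes superexponentially as $N \to \infty$ and then $\epsilon \to 0$. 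Combined with $\E^\b[\M_T^{\b, \mb V}] = 1$, this yields $\limsup_{\epsilon \to 0} \limsup_N \frac{1}{N^d} \log \E^\b[\exp(N^d \widehat{\J}_{\mb V}^T(\mb W^N, \pi^{N,\epsilon}))] \le 0$. Collecting these estimates produces the proposition's bound for a single choice of test functions, with $\widehat{\J}_{\mb V}^T + a\, \mf E_{(G,\varphi)}^{\g,H}$ in place of $\mf F_a$.

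The main obstacle is the final minmax step, namely upgrading this single-test-function inequality to one involving $\mf F_a = \J_T + a \mf E^\g$, in which $\J_T$ and $\mf E^\g$ are themselves suprema over test functions. I would handle this by a covering argument exploiting compactness of $\mc K$ and the lower semicontinuity of $\mf F_a$ established in Proposition \ref{s03}. For every $M > 0$ and $\eta > 0$, by lower semicontinuity and the definitions of $\J_T$ and $\mf E^\g$, one can exhibit a finite open cover $\{U_j\}_{j=1}^\ell$ of $\mc K$ together with test functions $(\mb V^{(j)}, G^{(j)}, \varphi^{(j)}, H^{(j)})$ such that $\inf_{U_j} [\widehat{\J}_{\mb V^{(j)}}^T + a\, \mf E_{(G^{(j)}, \varphi^{(j)})}^{\g, H^{(j)}}] \ge \min(\inf_{U_j} \mf F_a, M) - \eta$ on each $U_j$. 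Applying the previous bound to every $\mc K \cap U_j$, the union bound together with \eqref{maxlim} replaces the sum by a maximum over $j$, and refining the cover while letting $\eta \to 0$ and $M \to \infty$ recovers $\inf_{\mc K} \mf F_a$ in the limit, completing the proof.
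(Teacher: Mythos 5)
Your overall architecture matches the paper's: insert the energy term via H\"older, handle the current term via Girsanov and the replacement estimate, and then pass from a single choice of test functions to the full supremum defining $\mf F_a$ by a minimax argument on the compact set $\mc K$. The initial ``exponential Chebyshev then H\"older'' move is a mild repackaging of the paper's step of inserting $e^{-\frac{a}{1+a}N^d\mf E}e^{+\frac{a}{1+a}N^d\mf E}$ and applying H\"older with exponents $1+a$ and $(1+a)/a$. That part is sound.

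There are, however, two genuine gaps. First, the statement that ``$R_{N,\epsilon}^{\mb V}$ vanishes superexponentially'' and therefore $\limsup N^{-d}\log\E^\b[\exp(N^d\widehat\J_{\mb V}^T(\mb W^N,\pi^{N,\epsilon}))]\le 0$ is not a valid deduction as written: $R_{N,\epsilon}^{\mb V}$ is a \emph{random} functional of the trajectory, and Proposition \ref{see} only bounds $\Pb_{\mu_N}^{\b,\mb V,H}[|R|>\delta]$ superexponentially. The correct argument must split the expectation over the good set $A_{N,\epsilon,\d}^{\mb V,\b}$ (where the Girsanov density has a deterministic form up to $O(\delta)$ and one can invoke $\E^\b[\M_T^{\b,\mb V}]=1$) and its complement (where one uses a crude deterministic bound on $\widehat\J_{\mb V}^T$ together with the superexponential decay of the complement's probability). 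The paper performs exactly this intersection with $A_{N,\epsilon,\d}^{\mb V,\b}$ before applying H\"older and Girsanov.

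Second, and more seriously, your covering step is not justified. It silently requires lower semicontinuity on $D([0,T],\mc M^{d+1})$ of the per-test-function functional $\widehat\J_{\mb V}^T + a\,\mf E_{(G,\varphi)}^{\g,H}$ in order to propagate a pointwise choice of $(\mb V,G,\varphi,H)$ to an open neighborhood. But $\widehat\J_{\mb V}^T$ and ${\mc Q}_{i,H}$ involve $\sigma(\rho_t)$ and are not even defined on measures $\pi$ that are not absolutely continuous, let alone lower semicontinuous on the full space. You introduce the smoothing $\Xi_\epsilon$ in $\Psi_\epsilon$ at the beginning, which would cure this, but then drop it in the covering paragraph. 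The paper resolves this by working throughout with the smoothed $\pi^\epsilon$ (continuous in $\pi$), restricting to the closed sets $\mf D_{m,1/n}$ that enforce absolute continuity in the limit, establishing lower semicontinuity of the resulting functional $\widehat{\mf S}^{a,\d,\epsilon,m,n}_{\mb V,H,G,\varphi}$, invoking the minimax Lemma A2.3.3 of \cite{kl} at this level, and only then passing $\d\to 0$, $\epsilon\to 0$, $m,n\to\infty$ before a second application of Lemma A2.3.3 to minimize over $\mb V,H,G,\varphi$. Without this bookkeeping (or an equivalent), the covering argument you describe does not go through.
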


\begin{proof} Fix a compact set ${\mc K}$ of $D([0,T],\mc M^{d+1})$ and functions 
$(G,\varphi)\in \mc C_0^2(\L)\times  \mc C^1([0,T])$,
$H\in \mc C^\infty_c([0,T]\times \L)$, 
$\mb V = (V_1,\cdots, V_d) \in (\mc C^{1,1}([0,T]\times \L))^d$. Denote by $\mb O$ the vector-valued function $(0,\cdots,0)$,
where each component is the zero function and 
recall from \eqref{vne} and \eqref{vneb}, the definition
of ${\mc G}_{N,\epsilon}^{\mb V,\mb 0,\b}$ and ${\mc H}_N^{\partial_1 V_1}$.
For $\delta>0$, let $B_{N,\epsilon,\d}^{\mb V,\mb 0,\b}$, $E_{N,\d}^{\partial_1 V_1}$ be the sets 
of trajectories $(\eta_t)_{t\in [0,T]}$ defined by
\begin{eqnarray*}
B_{N,\epsilon,\d}^{\mb V,\mb 0,\b} &=& \Big\{ \eta_\cdot \in D([0,T], {\Sigma}_N) : 
\Big| \int_0^T {\mc G}_{N,\epsilon}^{\mb V,\mb 0,\b}(t,\eta_t) dt \Big| \le \d\Big\} \;,\\
E_{N,\d}^{\partial_1 V_1}&=& \Big\{ \eta_\cdot \in D([0,T], {\Sigma}_N) : 
\Big| \int_0^T {\mc H}_N^{\partial_1 V_1}(t,\eta_t) dt \Big| \le \d\Big\} \;
\end{eqnarray*}
and set 
\begin{equation*}
A_{N,\epsilon,\d}^{\mb V,\b} \;=\;
B_{N,\epsilon,\d}^{\mb V,\mb 0,\b}\cap
E_{N,\d}^{\partial_1 V_1}\; .
\end{equation*}

By \eqref{maxlim} and the superexponential estimates 
stated in Proposition \ref{see}, for any $\d>0$ 
\begin{equation}\label{ub0}
\limsup_{\epsilon\to 0}\limsup_{N\to\infty} \frac 1{N^d}  \log
 Q^{\b}_{\eta^N} \Big({\mc K} \cap \big(A_{N,\epsilon,\d}^{\mb V,\b}\big)^c \Big)\;=\; 
-\infty\; ,
\end{equation}
where $\big(A_{N,\epsilon,\d}^{\mb V,\b}\big)^c$ stands for the complementary of the set
$A_{N,\epsilon,\d}^{\mb V,\b}$.

Recall from \eqref{mfe} the definition of 
${\mf E}_{(G,\varphi)}^{\g,H}$. To short notation we denote by
${\widehat {\mc K}}_{N,\epsilon,\d}^{\mb V,\b} ={\mc K}\cap A_{N,\epsilon,\d}^{\mb V,\b} $, and
write
\begin{equation*}
\begin{aligned}
&\frac 1{N^d}  \log
 Q^{\b}_{\eta^N} \Big({\mc K} \cap A_{N,\epsilon,\d}^{\mb V,\b} \Big)
\; =\;\\
&\qquad
\frac 1{N^d}  \log
  \Es_{\eta^N}^{\b}\Big[ \1\{{\widehat {\mc K}}_{N,\epsilon,\d}^{\mb V,\b}  \}
  e^{-\frac{a}{1+a}N^d {\mf E}_{(G,\varphi)}^{\g,H}(\mb W^N,\pi^{N,\epsilon}) } 
e^{\frac{a}{1+a}N^d {\mf E}_{(G,\varphi)}^{\g,H}(\mb W^N,\pi^{N,\epsilon}) }\Big]\, .
\end{aligned}
\end{equation*}
By  H\"older inequality the right hand side of the last equality is bounded above by
\begin{equation}\label{ub1}
 \begin{aligned}
& \frac{1}{1+a}\frac{1}{N^d} \log
  \Es_{\eta^N}^{\b}   \Big[ \1\{{\widehat {\mc K}}_{N,\epsilon,\d}^{\mb V,\b} \}
  e^{-aN^d {\mf E}_{(G,\varphi)}^{\g,H}(\mb W^N,\pi^{N,\epsilon})}\Big]\\
&\qquad\quad
+ \; \frac{a}{1+a}\frac{1}{N^d} \log
 \Es_{\eta^N}^{\b} \Big[ e^{N^d{\mf E}_{(G,\varphi)}^{\g,H}(\mb W^N,\pi^{N,\epsilon})}\Big]\; .
 \end{aligned}
\end{equation}
From Lemma \ref{mf01}, the limsup when $N\uparrow \infty$ and $\epsilon \downarrow 0$ of 
the second term of this inequality is bounded by $ \displaystyle\frac{a}{1+a} C_2  (T+1)$, 
while the first term can be rewriten as the expectation with respect to 
the perturbed process introduced in Subsection 
\ref{s-perturb} whose law is given by $\mathbb P_{\eta^N}^{\b,\mb V}$, that is
\begin{equation}\label{ub2}
\frac{1}{1+a}\frac{1}{N^d}  
  \log \Es_{\eta^N}^{\b,\mb V}   \Big[\frac{d{\mathbb P}_{\eta_N}^{\b}}{d{\mathbb P}_{\eta_N}^{\b,\mb V}}
  \1\{{\widehat {\mc K}}_{N,\epsilon,c}^{F,\mb V,\b}  \}\\
e^{-aN^d {\mf E}_{(G,\varphi)}^{\g,H}(\mb W^N,\pi^{N,\epsilon})}\Big]\, .
\end{equation}
By  \eqref{girsanov},  the Radon-Nikodym derivative of ${\mathbb P}_{\eta_N}^{\b}$ with respect to the probability ${\mathbb P}_{\eta_N}^{\b,\mb V}$ 
defined by the Girsanov formula satisfies on the set $A_{N,\epsilon,\d}^{\mb V,\b}$
\begin{equation*}\label{Mtgb}
\frac{d{\mathbb P}_{\eta_N}^{\b}}{d{\mathbb P}_{\eta_N}^{\b,\mb V}}
\;=\;  \exp N^d\Big\{ - {\widehat \J}_{\mb V}^T (\mb W^N,\pi^{N,\epsilon}) 
\; +\; r(N,\epsilon,\d,\mb V) \Big\} \, ,
\end{equation*}
where  $ {\widehat \J}_{\mb V}^T(\cdot)$ is the functional 
defined in \eqref {eq:4.6}, and $r(N,\epsilon,c,\mb V)$ is a quanity satisfying 
\begin{equation*}\label{r-ub}
\lim_{\d\to 0}\lim_{\epsilon\to 0}\lim_{N\to \infty} r(N,\epsilon,\d,\mb V)=0\, .
\end{equation*}

We now exclude paths whose densities are not absolutely continuous with respect to the Lebesgue measure.
Fix a sequence $\{f_k: k\geq 1\}$ of smooth nonnegative functions
dense in $\mc C^0 (\Lambda)$ for the uniform topology.  For
$k\ge 1$ and $\varrho>0$, let
\begin{equation*}
\begin{aligned}
&\mb D_{k,\varrho} = \Big\{(\mb W,\pi)\in  D([0,T],\mc M^{d+1}):  \\
&\qquad\qquad\qquad\qquad\qquad
\, 0\le <\pi_t , f_k>
\le \int_{\L} f_k(x) \, dx  \,+\, C_k \varrho
\;,\,\  0\le t\le T \Big\} \, ,
\end{aligned}
\end{equation*}
where $C_k = C(\Vert \nabla f_k \Vert_\infty)$ is a constant depending on the
gradient $\nabla f_k$ of $f_k$. The sets $\mb D_{k,\varrho}$, $k \ge 1$,  $\varrho >0$ are closed subsets of 
$\in D([0,T],\mc M^{d+1})$, as well as 
$$
\mf D_{m,\varrho} \;=\; \bigcap_{k=1}^m \mb D_{k,\varrho}\;, \quad m\ge 1\, .
$$
Note that the empirical measure $\pi^N$  belongs to $\mf D_{m,\varrho}$ for $N$ sufficiently large. We have that
\begin{equation}
\label{mbD} 
{D([0,T],\mc M^{d}\times \mc F^1) = \cap_{n\ge 1} \cap_{m\ge 1}
\mf D_{m,1/n}}. 
\end{equation}   

For $m,n\in\Z_+$, let ${\widehat {\mc E}}_{(G,\varphi),H}^{\g,\epsilon,m,n}:D([0,T],\mc M^{d+1})\to\R\cup\{\infty\}$ be the functional given by
\begin{equation}
\label{F-mn}
{\widehat {\mc E}}_{(G,\varphi),H}^{\g,\epsilon,m,n}(\mb W,\pi)  =
\begin{cases}
  {\mf E}_{(G,\varphi)}^{\g,H}(\mb W,\pi^{\epsilon}) \ & \hbox{ if } \ \ 
      \pi\in \mf D_{m,\frac{1}{n}}\, ,\\ 
+\infty & \hbox{ otherwise .}
\end{cases}
\end{equation}
It is lower semicontinuous because so is $(\mb W,\pi) \mapsto {\mf E}_{(G,\varphi)}^{\g,H}(\mb W,\pi^{\epsilon})$,
and because $\mf D_{m,1/n}$ is closed.

Recollecting all previous estimates. Using the inequality \eqref{maxlim}, optimizing over $\pi$ in ${\mc K}$ and 
letting $N\uparrow \infty$, we obtain that, for any $m,n\in\Z_+$, $0<a \le 1$, $\d>0$ and $\epsilon$ small enough
\begin{equation}\label{ub30}
\limsup_{N\to \infty}\frac 1{N^d}  \log
 Q^{\b}_{\eta^N} \Big({\mc K} \Big)
 \, \le\,\\
\frac{1}{1+a}\sup_{(\mb W,\pi)\in {\mc K} }{\widehat {\mf S}}_{\mb V,H,G,\varphi}^{a,\d,\epsilon,m,n}(\mb W,\pi)\, .
 \end{equation}
Here, we have denoted
\begin{equation*}
{\widehat {\mf S}}_{\mb V,H,G,\varphi}^{a,\d,\epsilon,m,n}(\mb W,\pi)\, =\, 
\max \Big\{ 
 \Big(-
{{\widehat \J}^T}_{\mb V}(\mb W,\pi^\epsilon)+a {\mf R}_{H,G,\varphi}^{a,\d,\epsilon,m,n}(\mb W,\pi)\Big)
\, ,\, U_{0,a}(\mb V,\epsilon) \Big\}\, ,
 \end{equation*}
where
\begin{equation*}
 \begin{aligned}
{\mf R}_{H,G,\varphi}^{a,\d,\epsilon,m,n}(\mb W,\pi) &= 
 -{\widehat {\mc E}}_{(G,\varphi),H}^{\g,\epsilon,m,n}(\mb W,\pi) 
  +U_{1,a}(G,\varphi,H,\epsilon)+r(N,\epsilon,\d,\mb V) \, ,\\
  U_{1,a}(G,\varphi,H,\epsilon) & = \limsup_{N \to \infty}\frac{1}{N^d} \log
 \Es_{\eta^N}^{\b} \Big[ e^{N^d{\mf E}_{(G,\varphi)}^{\g,H}(\mb W^N,\pi^{N,\epsilon})}\Big]\; ,\\ 
U_{0,a}(\mb V,\epsilon) & =(1+a) \limsup_{N\to\infty} \frac {1}{N^d}  \log
 Q^{\b}_{\eta^N} \Big({\mc K} \cap \big(A_{N,\epsilon,\d}^{\mb V,\b}\big)^c \Big)\, .\\
 \end{aligned}
\end{equation*}

Note that, for each $m,n\in\Z_+$, $0<a \le 1$, $\d>0$ and $\epsilon>0$, the functional
${\widehat {\mf S}}_{\mb V,H,G,\varphi}^{a,\d,\epsilon,m,n}$ is lower semicontinuous. Minimizing the right hand side of
the inequality \eqref{ub30} over  $m,n\in\Z_+$, $\d>0$ and $0<\epsilon<1$, and using Lemma
A2.3.3 in \cite{kl} for our compact $\mc K$, we get 
$$
\limsup_{N\to \infty}\frac 1{N^d}  \log
 Q^{\b}_{\eta^N} \Big({\mc K} \Big)
 \, \le\,\\
\frac{1}{1+a}\sup_{(\mb W,\pi)\in {\mc K} }\inf_{\d,\epsilon,m,n}
{\widehat {\mf S}}_{\mb V,H,G,\varphi}^{a,\d,\epsilon,m,n}(\mb W,\pi)\, .
$$
By \eqref{ub0}, \eqref{r-ub}, \eqref{mbD} and Lemma \ref{mf01}
\begin{equation*}
\limsup_{\d\to 0} \limsup_{\varepsilon\to 0}\limsup_{m\to\infty}\limsup_{n\to\infty} 
{\widehat {\mf S}}_{\mb V,H,G,\varphi}^{a,\d,\epsilon,m,n}(\mb W,\pi) 
\le 
-\mf F_{{}_{\mb V,H}}^{(G,\varphi),a} (\mb W,\pi)
+aC_2(T+1)\, ,
\end{equation*}
where
\begin{equation*}
\mf F_{{}_{\mb V,H}}^{(G,\varphi),a}(\mb W,\pi)
=
\begin{cases}
  {{\widehat \J}}_{\mb V}^T (\mb W,\pi)  +a {\mf E}_{(G,\varphi)}^{\g,H}(\mb W,\pi)
  \ & \hbox{ if } \ \ 
      \pi\in D([0,T],{\mc M}^d\times {\mc F}^1)\, ,\\ 
+\infty & \hbox{ otherwise .}
\end{cases}
\end{equation*}
This result and the last inequality imply, 
\begin{equation*}
 \begin{aligned}
&\limsup_{N\to \infty}\frac 1{N^d}  \log
 Q^{\b}_{\eta^N} \Big({\mc K} \Big)\\
&\qquad\qquad  \, \le\,
-\frac{1}{1+a}\inf_{(\mb W,\pi)\in {\mc K} }\Big\{ 
\mf F_{{}_{\mb V,H}}^{(G,\varphi),a}(\mb W,\pi)
\Big\}
+\frac{a}{1+a}C_2(T+1)\, ,
  \end{aligned}
\end{equation*}
for any $\mb V,H,G,\varphi$. 
To conclude the proof of the proposition, it remains to 
 Minimize the last inequality over  $\mb V,H,G,\varphi$, and to use again Lemma
A2.3.3 in \cite{kl} for the compact $\mc K$.
\end{proof}
  
\medskip    
\noindent{\it Proof of the upper bound.}
Denote by ${\widehat {\mc E}}^\g:D([0,T],\mc M^{d+1})$ the lower semicontinuous functional 
\begin{equation*}
{\widehat {\mc E}}^\g(\mb W,\pi)  =
\begin{cases}
  {\mf E}^\g(\mb W,\pi)  \ & \hbox{ if } \ \ (\mb W,\pi)\in
      D([0,T],\mc M^{d}\times {\mc F}^1)\, ,\\ 
+\infty & \hbox{ otherwise .}
\end{cases}
\end{equation*}

Let ${\mc K}$ be a compact set of $D([0,T],\mc M^{d+1})$. If for all $(\mb W,\pi)\in {\mc K}$, 
${\widehat {\mc E}}^\g(\mb W,\pi)=+\infty$ then the upper bound is trivially satisfied. 
Suppose that $\displaystyle \inf_{(\mb W,\pi) \in {\mc K}}\big\{{\widehat{\mc E}}^\g(\mb W,\pi)\big\}<\infty$, 
from Proposition \ref{up1}, for any $0< a\le 1$,
\begin{equation*}
\begin{aligned}
&\limsup_{N\to\infty} \frac 1{N^d}  \log
 Q_{\eta^N}^{\b} \big({\mc K}\big)\;\le\;
 -\frac1{1+a}\inf_{(\mb W,\pi)\in {\mc K}\atop  {{\widehat{\mc E}}_\g}(\mb W,\pi)<\infty}\mf F_a(\mb W,\pi)\, +\,  \frac{a}{1+a} C_2  (T+1) \\
&\qquad
=\; -\frac1{1+a}\inf_{(\mb W,\pi)\in {\mc K}}\Big\{ {\mc J}_T^\g (\mb W,\pi)\, +\,a{\widehat{\mc E}}_\g(\mb W,\pi) \Big\}
\, +\, \frac{a}{1+a} C_2  (T+1)\\
&\qquad
\le\; -\frac1{1+a}\inf_{(\mb W,\pi)\in {\mc K}} {{\mc J}_T}^\g (\mb W,\pi) -
\frac{a}{1+a}\inf_{(\mb W,\pi)\in {\mc K}}{\widehat{\mc E}}_\g(\mb W,\pi) +   \frac{a}{1+a} C_2  (T+1) \,  .
\end{aligned}
\end{equation*}
To conclude the proof of the upper bound for compact sets, it remains to let $a\downarrow 0$.

To pass from compact sets to closed sets, we have to obtain
exponential tightness for the sequence  $\big\{Q_{\eta^N}^{\b}\, ,\; N\ge 1\big\}$.
The proof presented in \cite{bdgjl2, bl2} is easily adapted to our context.
\cqfd

\medskip
\section{large deviations lower bound for the empirical current}\label{secldlb}
The strategy of the proof of the lower  bound consists of two steps.
We first get a lower bound for neighbourhoods of regular trajectories. Then we extend the lower bound
for all open set by showing in Theorem \ref{th-I-d} that the set of all regualar trajectories is $\mc J_T^\g$-dense in the following sens:

\begin{definition}
A subset $\ca$ of $D([0,T],\mc M^{d+1})$ is said to be
${\mc J}_T^\g$-dense if for every $(\mb W,\pi)$ in $D([0,T],\mc M^{d+1})$ such
that ${\mc J}_T^\g(\mb W,\pi)<\infty$, there exists a sequence $\{(\mb W^n,\pi^n) : n\ge
1\}$ in $\ca$ such that $(\mb W^n,\pi^n)$ converge to $(\mb W,\pi)$ in $D([0,T], \mc M^{d+1})$ and
$\displaystyle \lim_{n\to\infty}{\mc J}_T^\g(\mb W^n,\pi^n)  \, =\, {\mc J}_T^\g(\mb W,\pi)$.
\end{definition}

To clarify the meaning of regular trajectory, we 
consider the heat equation given by the boundary  
value problem \eqref{eq:1} for $\b=0$: 
\begin{equation}
\label{f00}
\begin{cases}
\partial_t\rho  & = \;\;  \Delta\rho \, \qquad \hbox {in } \qquad  \Lambda \times (0,T) ,\\ 
\rho_0 (\cdot)    & =  \;\; \gamma (\cdot)  \,  \qquad \hbox {in } \qquad  \Lambda ,\\
\rho_t|_\Gamma  & = \;\; b(\cdot) \;\;\; 
\hbox{ for }\; 0\leq t\leq T \, .
\end{cases}
\end{equation}
Denote by  $\rho^{(0)}$ its unique weak solution, and set
$\pi_t^{(0)}(du)=\rho^{(0)}_t(u)du$. Let $(\mb W^{(0)}_t)_{t\in [0,T]}$ be the weak solution of the equation
$$
\partial_t \mb W_t+\nabla \rho^0 =0\, , \qquad \mb W_0=0\, .
$$
Notice that, an approximation of 
$\frac{\nabla \rho^{(0)}}{\sigma(\rho^{(0)})}$ by smooth functions shows that 
$\mc Q(\rho^{(0)})<\infty$, (see \cite{blm}, (5.1)). Morover,
by construction $(\mb W^0,\pi^{(0)})\in \mf A_\g$, and 
\begin{equation*}
\mc J^\g(\mb W^0,\pi^0)\, \le\,
\frac{\b^2}{4}\int_0^T dt\int_\L\big|\nabla\rho^{(0)}_t\big|^2\, <\, \infty\; .
\end{equation*}
(see \cite{mo3}, Lemma 5.8.).

\begin{definition} A trajectory $(\mb W,\pi) \in D([0,T],\mc M^{d+1})$ is said to be regular
if 
\begin{itemize}  
 \item[(i)] ${\mc J}_T^\g(\mb W,\pi)<\infty$, 
 $\pi(t,du)=\rho_t(u)du$.
 \item[(ii)]There exists $c>0$ such that $(\mb W,\pi)=(\mb W^0,\pi^0)$ in the time interval $[0,c]$.
\item[(iii)] For all $0<\delta \le T$, there exists $\epsilon >0$ such
that $\epsilon \le \rho_t(u) \le 1-\epsilon$ for $(t,u)\in [\delta, T]\times \Lambda$.
\item[(iv)] $\rho$ is the solution of the  boundary value problem  \eqref{eq:V} for some 
$\mb V=(V_1,\cdots,V_d)\in  \big(\mc C^{1,1}([0,T]\times \L)\big)^d$.
\end{itemize}
We denote by $\mc A^0$ the class of all regular trajectories. 
\end{definition}

To derive the lower bound for paths $(\mb W, \pi)$ in $\mc A^0$ we follow the  arguments
used in \cite{kov, kl} to show that for each neighborhood $\mc N_{(\mb W,\pi)}$ of
$(\mb W,\pi)$ in  $D \big( [0,T], \mc M^{d+1} \big)$, 
\begin{equation}\label{low1}
\liminf_{N \to \infty}\;\frac1{N^d} \log \mathbb P_{\eta^N}^{\b} \big\{ \mc N_{(\mb W,\pi)} \big\} \ge - 
\mc J_T^\b(\mb W,\pi)\, .
\end{equation}

As mentioned at the beginning of this section, the lower bound of the large deviations principle
is then accomplished for general trajectotries using the next result.

\begin{theorem}\label{th-I-d} The class $\mc A^0$ is ${\mc J}_T^\g$-dense.
\end{theorem}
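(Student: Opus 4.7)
The plan is to exhibit, for any $(\mb W,\pi)$ with $\mc J_T^\g(\mb W,\pi)<\infty$, an approximating sequence $\{(\mb W^n,\pi^n)\}\subset \mc A^0$ converging in $D([0,T],\mc M^{d+1})$ and satisfying $\mc J_T^\g(\mb W^n,\pi^n)\to \mc J_T^\g(\mb W,\pi)$. The central tool is the representation of Lemma \ref{rep0}: $\pi$ has a density $\rho\in L^2([0,T],H^1(\Lambda))$ and there is a drift $\mb U\in\LL^2(\sigma(\pi))$ with
\begin{equation*}
\partial_t\mb W_t \;=\; -\nabla\rho_t + \sigma(\rho_t)\bigl[\beta\nabla(\jn\star\rho_t)+\mb U_t\bigr], \qquad
\mc J_T^\g(\mb W,\pi) \;=\; \tfrac12\int_0^T\langle\sigma(\rho_t),\mb U_t\cdot\mb U_t\rangle\, dt.
\end{equation*}
The rate is therefore a weighted $L^2$-norm of $\mb U$, and I follow the three-step scheme of \cite{kov,qrv,blm,bdgjl2,mo3}; the novelty is that each modification must simultaneously control $(\mb W^n,\pi^n)$ and the extra nonlocal term $\sigma(\rho)\nabla(\jn\star\rho)$.

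The three successive approximations are as follows. (A) Non-degeneracy: set $\rho^\lambda_t := (1-\lambda)\rho_t + \lambda\theta$ for $\lambda\in(0,1)$, where $\theta$ is the smooth strictly interior extension of $b$ from Section~\ref{sec2}. Then $\rho^\lambda|_\Gamma=b$ and $\lambda\inf\theta\le \rho^\lambda\le 1-\lambda(1-\sup\theta)$, so (iii) is satisfied uniformly in $t$. Define $\mb W^\lambda_t:=(1-\lambda)\mb W_t$; the pair $(\mb W^\lambda,\pi^\lambda)$ lies in $\mf A_{\g^\lambda}$ with initial profile $\gamma^\lambda=(1-\lambda)\gamma+\lambda\theta$, and its drift $\mb U^\lambda$ obtained from Lemma \ref{rep0} converges to $\mb U$ as $\lambda\to0$. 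Rate convergence follows by dominated convergence, exploiting the uniform continuity of $\rho\mapsto \sigma(\rho)\nabla(\jn\star\rho)$ supplied by Lemma \ref{lem-jn}. (B) Initial matching: glue the path from (A) to $(\mb W^{(0)},\pi^{(0)})$ on a small layer $[0,c]$, setting $\rho^c_t=\rho^{(0)}_t$ for $t\in[0,c]$ and $\rho^c_t=\rho^\lambda_{t-c}$ for $t\in[c,T]$, with $\mb W^c$ recovered through \eqref{eq:4.5}. The finiteness $\mc J_T^\g(\mb W^{(0)},\pi^{(0)})<\infty$ noted before the theorem, combined with the absolute continuity of $t\mapsto \int_0^t\langle\sigma(\rho_s),\mb U_s\cdot\mb U_s\rangle\, ds$, yields $\mc J_T^\g(\mb W^c,\pi^c)\to \mc J_T^\g(\mb W,\pi)$ as $c\to0$; condition (ii) now holds and the correct initial datum $\gamma$ is restored. (C) Smoothness of the drift: on $[c,T]$ the density $\rho^c$ is bounded away from $0$ and $1$, so $\LL^2(\sigma(\pi^c))$ and $(L^2([c,T]\times\Lambda))^d$ have equivalent norms. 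Pick $\mb V^n\in (\mc C^{1,1}([0,T]\times\Lambda))^d$ supported in $(c/2,T]\times\Lambda$ with $\mb V^n\to \mb U^c$ in $\LL^2(\sigma(\pi^c))$. Let $\rho^n$ be the unique weak solution of \eqref{eq:V} with drift $\mb V^n$ and initial datum $\gamma$, and define $\mb W^n$ from \eqref{rep2} with $\mb U=\mb V^n$; then $(\mb W^n,\pi^n)\in \mc A^0$.

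The principal obstacle is step (C): continuous dependence of the weak solution of the nonlocal, nonlinear boundary value problem \eqref{eq:V} on its smooth drift, and the consequent convergence of $\mc J_T^\g$. I would carry this out by writing an $L^2$ energy estimate for the difference $\rho^n-\rho^c$; the map $\rho\mapsto \sigma(\rho)\nabla(\jn\star\rho)$ is Lipschitz from $L^2([0,T]\times\Lambda)$ to itself, by Lemma \ref{lem-jn} combined with the boundedness of $\sigma'$, which closes a Gronwall argument and yields $\rho^n\to\rho^c$ in $L^2$. The lower bound $\rho^n\ge \epsilon_c$ on $[c,T]\times\Lambda$ is stable in the limit by maximum-principle comparison with $\rho^c$, so $\sigma(\rho^n)\to\sigma(\rho^c)$ uniformly on $[c,T]\times\Lambda$ and $\mc J_T^\g(\mb W^n,\pi^n)\to \mc J_T^\g(\mb W^c,\pi^c)$. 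A diagonal extraction in the three parameters $\lambda\to0$, $c\to0$, $n\to\infty$ then produces the sought sequence in $\mc A^0$.
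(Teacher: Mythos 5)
The overall skeleton (pass to nondegenerate densities, then match the initial layer with the heat flow, then regularize the drift via Lemma~\ref{rep0}) is the right one, but as written step~(B) does not produce a path of finite rate, and this propagates backward into step~(A).

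Concretely, in step~(B) you set $\rho^c_t=\rho^{(0)}_t$ on $[0,c]$ and $\rho^c_t=\rho^\lambda_{t-c}$ on $[c,T]$. At $t=c$ the density jumps from $\rho^{(0)}_c$ to $\rho^\lambda_0=\gamma^\lambda$, and these do not agree: $\gamma^\lambda=(1-\lambda)\gamma+\lambda\theta$ is neither $\gamma$ nor $\rho^{(0)}_c$. A path with a jump in $\rho$ cannot have $\mc E^\gamma(\mb W^c,\pi^c)<\infty$: the pair violates the continuity equation~\eqref{eq:4.5} (equivalently, $\mb W^c$ must jump, so $\dot{\mb W}^c$ has a singular part and the quadratic rate blows up). The paper's construction avoids precisely this pitfall by inserting a \emph{time-reversed} heat layer: it follows $(\mb W^{(0)}_t,\pi^{(0)}_t)$ on $[0,\epsilon]$, then $(\mb W^{(0)}_{2\epsilon-t},\pi^{(0)}_{2\epsilon-t})$ on $[\epsilon,2\epsilon]$, returning continuously to the value $\gamma$ at time $2\epsilon$, and only then attaches the shifted original path. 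Your gluing omits the fold-back and has no other mechanism to make the junction continuous.

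The deeper reason the paper's ordering is the opposite of yours is that step~(A) with the time-\emph{independent} profile $\theta$ changes the initial datum to $\gamma^\lambda\neq\gamma$, which your step~(B) is then supposed to undo but cannot. If instead you mix with the time-dependent heat flow, $\pi^\lambda_t=(1-\lambda)\pi_t+\lambda\pi^{(0)}_t$ and $\mb W^\lambda_t=(1-\lambda)\mb W_t+\lambda\mb W^{(0)}_t$, the initial datum is preserved since $\pi^{(0)}_0=\gamma$, and parabolic smoothing gives $\rho^{(0)}_t\in[\varepsilon_\delta,1-\varepsilon_\delta]$ uniformly on $[\delta,T]$, so condition~(iii) still holds. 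This is what the paper's second lemma does. Moreover the paper performs the (ii)-step \emph{first}, which ensures that on a layer near $t=0$ the path is a genuine heat flow; this is what later makes $\mb U$ and $\nabla F$ genuinely in $L^2([0,T]\times\Lambda)$ (via $\mb U_t=\nabla F_t=-\beta\nabla(\jn\star\rho_t)$ for small $t$ and the nondegeneracy bound for $t\ge\delta$), a fact you implicitly need in step~(C) and which cannot be obtained from (A) alone. Your step~(C) — Gronwall for continuous dependence on the drift, versus the paper's compactness-plus-uniqueness argument — is a legitimate alternative route; the gap is entirely in (A)+(B).
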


The proof of this theorem is an adaptation of the $I$-density presented in \cite{blm, flm, mo3} for the couple $(\mb W,\pi)$.
We therefore provide only a presentation of its main steps, with an outline of the proofs.

\begin{lemma} The set of all trajectories satisfying (i) and (ii) is ${\mc J}_T^\g$-dense.
\end{lemma}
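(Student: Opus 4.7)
The plan is to glue the heat-equation solution $(\mb W^{(0)}, \pi^{(0)})$ to the given trajectory $(\mb W, \pi)$ through a short transition window. Given $(\mb W, \pi)$ with $\mc J_T^\g(\mb W, \pi) < \infty$, Lemma \ref{lem01} and Lemma \ref{rep0} provide $\pi \in C([0,T], \mc F^1)$ with $\mc Q(\pi) < \infty$, and $\mb U \in \LL^2(\sigma(\pi))$ such that $\dot{\mb W}_t = -\nabla \rho_t + \sigma(\rho_t)[\beta \nabla(\jn \star \rho_t) + \mb U_t]$ and $\mc J_T^\g(\mb W, \pi) = \tfrac12 \Vert \mb U\Vert^2_{\LL^2(\sigma(\pi))}$. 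For each small $\epsilon > 0$, I would define an approximant coinciding with $(\mb W^{(0)}, \pi^{(0)})$ on $[0, \epsilon]$, with $(\mb W, \pi)$ on $[2\epsilon, T]$, and linearly interpolating in time between them on $[\epsilon, 2\epsilon]$:
\begin{equation*}
\rho^\epsilon_t = \tfrac{2\epsilon - t}{\epsilon}\, \rho^{(0)}_\epsilon + \tfrac{t - \epsilon}{\epsilon}\, \rho_{2\epsilon}, \qquad \mb W^\epsilon_t = \mb W^{(0)}_\epsilon + \tfrac{t - \epsilon}{\epsilon}(\mb W_{2\epsilon} - \mb W^{(0)}_\epsilon).
\end{equation*}
Because the defining constraint of $\mf A_\gamma$ is linear in $(\mb W, \pi)$, it passes through time-convex combinations, so $(\mb W^\epsilon, \pi^\epsilon)$ belongs to $\mf A_\gamma$; condition (ii) then holds with $c = \epsilon$. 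Convergence $(\mb W^\epsilon, \pi^\epsilon) \to (\mb W, \pi)$ in $D([0,T], \mc M^{d+1})$ follows from continuity in time of $\pi$ and the identity $\mb W_0 = \mb W^{(0)}_0 = 0$.

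For the convergence $\mc J_T^\g(\mb W^\epsilon, \pi^\epsilon) \to \mc J_T^\g(\mb W, \pi)$, I would split the representation \eqref{rep1} over the three intervals. The integral over $[2\epsilon, T]$ equals $\tfrac12 \int_{2\epsilon}^T \langle \sigma(\rho_t), |\mb U_t|^2\rangle\, dt$ and converges to $\mc J_T^\g(\mb W, \pi)$ by dominated convergence. On $[0, \epsilon]$, since $\rho^{(0)}$ solves the heat equation, $\dot{\mb W}^{(0)}_t + \nabla \rho^{(0)}_t = 0$, so the corresponding integrand collapses to $\beta^2 \sigma(\rho^{(0)}_t) |\nabla(\jn \star \rho^{(0)}_t)|^2$, bounded by smoothness of $\rho^{(0)}$ and yielding a contribution of order $\epsilon$. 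On the transition window $[\epsilon, 2\epsilon]$, direct computation gives $\dot{\mb W}^\epsilon_t = \epsilon^{-1}(\mb W_{2\epsilon} - \mb W^{(0)}_\epsilon)$; the key estimate is
\begin{equation*}
\Vert \mb W_{2\epsilon}\Vert^2_{L^2(\L)} \le 2\epsilon \int_0^{2\epsilon} \Vert \dot{\mb W}_s\Vert^2_{L^2(\L)}\, ds,
\end{equation*}
whose right-hand side vanishes with $\epsilon$ since $\int_0^T \Vert \dot{\mb W}_s\Vert^2_{L^2}\, ds$ is finite (deduced from the finiteness of $\mc Q(\pi)$ and of $\mc J_T^\g(\mb W, \pi)$, combined with the boundedness of $\sigma$ and of $\nabla \jn \star \rho$). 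The factor $\epsilon^{-1}$ in $\dot{\mb W}^\epsilon_t$ is absorbed by the $\epsilon$ prefactor; the $\nabla \rho^\epsilon$ contribution is handled analogously through the explicit interpolation formula and the $L^2$-integrability of $\nabla \rho$ inherited from $\mc Q(\pi) < \infty$.

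The main obstacle is that the denominator $\sigma(\rho^\epsilon_t)$ in the representation of $\mc J_T^\g$ may degenerate on $[\epsilon, 2\epsilon]$ at points where $\rho^{(0)}_\epsilon$ or $\rho_{2\epsilon}$ approaches $\{0, 1\}$. The concavity of $\sigma$ gives the pointwise bound $\sigma(\rho^\epsilon_t)^{-1} \le \sigma(\rho^{(0)}_\epsilon)^{-1} + \sigma(\rho_{2\epsilon})^{-1}$, which is not directly integrable: one has $\mc Q(\pi) < \infty$ only as a weighted bound for $|\nabla \rho|^2 / \sigma(\rho)$. I would therefore first approximate $\rho$ by its truncation bounded away from $\{0,1\}$ (the content of the next lemma in the $\mc J_T^\g$-density chain), carry out the gluing above on the truncated trajectory, and then let the truncation parameter tend to $0$; the extra regularity (iii)-(iv) is produced in subsequent lemmata by space mollification and by solving \eqref{eq:V}.
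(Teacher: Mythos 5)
The gluing idea is the same, but your transition construction is genuinely different from the paper's and runs into a difficulty that the paper sidesteps. The paper defines $\big(\mb W^\epsilon_t,\pi^\epsilon_t\big)=\big(\mb W^{(0)}_t, \pi^{(0)}_t\big)$ for $t\in[0,\epsilon]$, then runs the heat equation \emph{backward}, $\big(\mb W^{(0)}_{2\epsilon-t}, \pi^{(0)}_{2\epsilon-t}\big)$ for $t\in[\epsilon,2\epsilon]$, so that the path returns to $(0,\gamma)$ at time $2\epsilon$ and can continue as the time-shifted original $\big(\mb W_{t-2\epsilon}, \pi_{t-2\epsilon}\big)$. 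This keeps the entire bridge $[0,2\epsilon]$ inside the heat-equation trajectory $\pi^{(0)}$, whose cost is controlled directly: the forward segment contributes $O(\epsilon)$ as you observe, and the backward segment has integrand comparable to $|\nabla\rho^{(0)}_t|^2/\sigma(\rho^{(0)}_t)$ plus a bounded term, hence is dominated by $\mc Q(\pi^{(0)})<\infty$. No linear-in-time interpolation appears and no $1/\sigma$ degeneracy ever needs to be tamed.

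Your construction replaces the backward heat run by the linear interpolant $\rho^\epsilon_t = \tfrac{2\epsilon - t}{\epsilon}\,\rho^{(0)}_\epsilon + \tfrac{t - \epsilon}{\epsilon}\,\rho_{2\epsilon}$, and as you note yourself, the resulting $\sigma(\rho^\epsilon_t)^{-1}$ is not controlled. Your proposed cure — truncate $\rho$ away from $\{0,1\}$ first, then glue — does not actually close the gap. The truncation $(1-\varepsilon)\pi+\varepsilon\pi^{(0)}$ keeps the initial condition $\gamma$ at $t=0$, and both $\rho^{(0)}_\epsilon$ and $\rho_{2\epsilon}$ converge to $\gamma$ as $\epsilon\to 0$; thus the lower bound on $\sigma$ in the window $[\epsilon,2\epsilon]$ is of order $\varepsilon\cdot\inf_u\rho^{(0)}_{2\epsilon}(u)$, which degenerates as $\epsilon\to0$ for fixed $\varepsilon$ whenever $\gamma$ touches $\{0,1\}$. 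Whether this degeneracy is absorbed by the $o(\epsilon)$ from $\Vert\mb W_{2\epsilon}\Vert^2_{L^2}$ requires a quantitative rate for the heat kernel's instantaneous smoothing — an extra, nontrivial ingredient not present in the argument, and logically it also reverses the lemma chain in a way the paper avoids precisely because the truncation lemma in the paper exploits that the input already satisfies (ii). The paper's time-reversal trick makes the whole issue disappear; your route would need to be completed by genuinely new estimates.
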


\begin{proof}
Fix a path $(\mb W,\pi)$ such that ${\mc J}_T^\g(\mb W,\pi)<\infty$.  
For $\epsilon
>0$, define $(\mb W^\epsilon,\pi^\epsilon)$ as
\begin{equation*}
\big(\mb W^\epsilon_t,\pi^\epsilon_t\big)
\, =\,
\left\{
\begin{array}{ll}
{\displaystyle
  \big(\mb W^{(0)}_t, \pi^{(0)}_t\big)} & \text{for $0\le t\le \epsilon$}, \\
{\displaystyle
\big(\mb W^{(0)}_{2\epsilon - t}, \pi^{(0)}_{2\epsilon - t}\big)} & \text{for $\epsilon \le t\le 2
  \epsilon$}, \\
{\displaystyle
\big(\mb W_{t-2\epsilon}, \pi_{t-2\epsilon}\big)} & \text{for $2 \epsilon \le t\le T$}. 
\end{array}
\right.
\end{equation*}
Clearly, $ \lim_{\ve \to 0}\big(\mb W^\epsilon,\pi^\epsilon\big) =\big(\mb W,\pi\big)$ in 
$D([0,T],\cm^{d+1})$.
The same strategy as in Lemma 5.4., \cite{blm} or Lemma 5.11., \cite{mo3},  yields
$\mc J_T^\g \big(\mb W^\epsilon,\pi^\epsilon\big)<\infty$, for all $\varepsilon >0$,  and
$\displaystyle\lim_{\varepsilon\to 0} \mc J_T^\g \big(\mb W^\epsilon,\pi^\epsilon\big)\, =\, 
\mc J_T^\g \big(\mb W,\pi\big)$.
This concludes the proof.
\end{proof}

\begin{lemma} The set of all trajectories satisfying (i), (ii) and (iii) is ${\mc J}_T^\g$-dense.
\end{lemma}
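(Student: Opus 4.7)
The plan is to form a convex combination of $(\mb W,\pi)$ with the reference heat-equation path $(\mb W^{(0)},\pi^{(0)})$. Given $(\mb W,\pi)$ satisfying (i) and (ii), so that $(\mb W_t,\pi_t)=(\mb W^{(0)}_t,\pi^{(0)}_t)$ on some initial interval $[0,c]$, I would set for $\lambda\in(0,1]$
$$
\rho^\lambda_t\,:=\,(1-\lambda)\rho_t+\lambda\rho^{(0)}_t,\qquad
\mb W^\lambda_t\,:=\,(1-\lambda)\mb W_t+\lambda\mb W^{(0)}_t,
$$
and $\pi^\lambda_t(du)=\rho^\lambda_t(u)\,du$. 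Linearity of the constraint \eqref{eq:4.5} gives $(\mb W^\lambda,\pi^\lambda)\in\mf A_\g$; on $[0,c]$ both summands coincide with $(\mb W^{(0)},\pi^{(0)})$ so (ii) is preserved; and by standard parabolic regularity for \eqref{f00} with boundary data $b$ bounded strictly between $0$ and $1$, for every $\delta>0$ there is $c_\delta>0$ with $c_\delta\le\rho^{(0)}_t(u)\le1-c_\delta$ on $[\delta,T]\times\overline\Lambda$, which forces $\lambda c_\delta\le\rho^\lambda_t(u)\le1-\lambda c_\delta$ on the same set, proving (iii). Trivially $(\mb W^\lambda,\pi^\lambda)\to(\mb W,\pi)$ in $D([0,T],\mc M^{d+1})$ as $\lambda\to 0$.

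The nontrivial step is the convergence $\mc J_T^\g(\mb W^\lambda,\pi^\lambda)\to\mc J_T^\g(\mb W,\pi)$. Lower semicontinuity (Proposition~\ref{s03}) handles the $\liminf$, so it remains to produce the $\limsup$ bound. My plan is to invoke Lemma~\ref{rep0} to pick $\mb U,\mb U^{(0)},\mb U^\lambda\in\LL^2(\sigma(\cdot))$ representing the three rate functionals, so that $\sigma(\rho_t)\mb U_t=\partial_t\mb W_t+\nabla\rho_t-\beta\sigma(\rho_t)\nabla(\jn\star\rho_t)$ and likewise for the other two; in particular $\mb U^{(0)}_t=-\beta\nabla(\jn\star\rho^{(0)}_t)$ since $\partial_t\mb W^{(0)}_t+\nabla\rho^{(0)}_t=0$ by construction. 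Using the identity
$$
\sigma(\rho^\lambda_t)\,=\,(1-\lambda)\sigma(\rho_t)+\lambda\sigma(\rho^{(0)}_t)+2\lambda(1-\lambda)(\rho_t-\rho^{(0)}_t)^2
$$
and linearity of the convolution, a direct computation yields
$$
\sigma(\rho^\lambda_t)\mb U^\lambda_t\,=\,(1-\lambda)\sigma(\rho_t)\mb U_t+\lambda\sigma(\rho^{(0)}_t)\mb U^{(0)}_t-\beta R_\lambda(t,u),
$$
where the residual $R_\lambda$ vanishes on $[0,c]$ and is $O(\lambda)$ in $L^\infty([0,T]\times\Lambda)$ (using Lemma~\ref{lem-jn} to bound $\nabla(\jn\star\cdot)$ together with $\rho,\rho^{(0)}\in[0,1]$). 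Combining the elementary inequality $|a_1+a_2|^2/(y_1+y_2)\le a_1^2/y_1+a_2^2/y_2$ with $\sigma(\rho^\lambda_t)\ge(1-\lambda)\sigma(\rho_t)+\lambda\sigma(\rho^{(0)}_t)$, and absorbing the cross term carrying $R_\lambda$ by Young's inequality using $\sigma(\rho^\lambda_t)\ge\lambda\sigma(\rho^{(0)}_t)\ge\lambda\, c_c$ on $[c,T]$, I would deduce
$$
\mc J_T^\g(\mb W^\lambda,\pi^\lambda)\,\le\,(1-\lambda)\,\mc J_T^\g(\mb W,\pi)+\lambda\,\mc J_T^\g(\mb W^{(0)},\pi^{(0)})+o(1)\quad(\lambda\to 0),
$$
and the claim follows since $\mc J_T^\g(\mb W^{(0)},\pi^{(0)})<\infty$.

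The main obstacle is that $\mc J_T^\g$ is not convex in $(\mb W,\pi)$, precisely because of the nonlinear $\beta$-interaction term in \eqref{J0}, so a naive convex combination estimate on the rate functional fails. The representation of Lemma~\ref{rep0}, combined with the joint convexity of $(a,y)\mapsto a^2/y$ and the lower bound on $\sigma(\rho^{(0)})$ away from $t=0$, supplies a quasi-convex upper bound that is just strong enough to close the argument.
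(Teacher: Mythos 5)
Your proposal is correct and follows essentially the same approach as the paper: the paper's own proof forms exactly the convex combination $\mb W^\varepsilon=(1-\varepsilon)\mb W+\varepsilon\mb W^{(0)}$, $\pi^\varepsilon=(1-\varepsilon)\pi+\varepsilon\pi^{(0)}$ and then refers the reader to the analogous argument in \cite[Lemma 5.12]{mo3} for the convergence of the rate functional. Your outline fills in the details of that cited argument — in particular the concavity identity $\sigma(\rho^\lambda)=(1-\lambda)\sigma(\rho)+\lambda\sigma(\rho^{(0)})+2\lambda(1-\lambda)(\rho-\rho^{(0)})^2$, the representation of Lemma~\ref{rep0}, the joint convexity of $(a,y)\mapsto |a|^2/y$, and the control of the nonlinear $\beta$-residual using the lower bound $\sigma(\rho^\lambda)\ge\lambda\sigma(\rho^{(0)})$ away from $t=0$ — in a way that is consistent with the paper's scheme.
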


\begin{proof}
Denote by $\mc A^1$ the set of all trajectories $(\mb W,\pi)$ satisfying (i), (ii) and (iii). 
By the previous lemma, it is enough to show that each trajectory $(\mb W,\pi)$ satisfying (i) and (ii)
can be approximated by trajectories in $\mc A^1$. Fix such trajectory $ (\mb W, \pi) $.
For each $0<\varepsilon\le1$, let $(\mb W^\varepsilon,\pi^\varepsilon)$ given by
$$
\mb W^\varepsilon=(1-\varepsilon)\mb W+\varepsilon \mb W^{(0)}\, ,\quad \pi^\varepsilon=(1-\varepsilon)\pi+\varepsilon \pi^{(0)}\, . 
$$  
Repeating the arguments presented in \cite[Lemma 5.12.]{mo3}, one can prove that 
$ \lim_{\ve \to 0}\big(\mb W^\epsilon,\pi^\epsilon\big) =\big(\mb W,\pi\big)$ in $D([0,T],\cm^{d+1})$,
$\mc J_T^\g \big(\mb W^\epsilon,\pi^\epsilon\big)<\infty$, for all $\varepsilon >0$  and
$\displaystyle\lim_{\varepsilon\to 0} \mc J_T^\g \big(\mb W^\epsilon,\pi^\epsilon\big)\, =\, 
\mc J_T^\g \big(\mb W,\pi\big)$.
\end{proof}

\medskip
\noindent {\it Proof of Theorem \ref{th-I-d}. }
Recall that $\mc A^1$ stands for the set of all trajectories $(\mb W,\pi)$ satisfying (i), (ii) and (iii). 
From the previous lemmata, it is enough to show that each trajectory $(\mb W,\pi)$ in $\mc A^1$
can be approximated by trajectories of $\mc A^1$ satisfying (iv). Fix $(\mb W,\pi)\in \mc A^1$ and denote $\rho_t(\cdot)$ the density of $\pi_t$ for $0\le t\le T$.
By Lemma \ref{rep0}, there exist $\mb U=(U_1,\cdots,U_d)\in \LL^2(\sigma(\pi))$ and $F\in H^1_0(\sigma(\pi))$
such that $\rho$ solves the equation \eqref{eq:V} with $\mb V=\nabla F$ and $\mb W$ solves the equation
\eqref{rep2}. We claim that $\mb U\in\big( L^2([0,T]\times \L)\big)^d$ and $F\in  L^2([0,T],H^1(\L))$. Indeed,
from condition (ii), $\rho$ is the
weak solution of \eqref{f00} in some time interval $[0,2\delta]$ for
some $\delta>0$. In particular, $\rho_t = \rho_t^{(0)}$, 
$\displaystyle \mb W_t=-\int_0^t \nabla\rho_s^{(0)} ds$ for $0\le t\le 2\d$, which implies that
$\mb U_t = \nabla F_t=-\b \nabla (\jn\star \rho_t)$ a.e in
$[0,2\delta]\times\Omega$. On the other hand, from condition (iii),
there exists $\epsilon>0$ such that $\epsilon \le
\rho_t(\cdot) \le 1-\epsilon$ for $\delta \le t\le T$.
Hence, by Lemma \ref{compar1},
\begin{equation*}\label{g05b}
\begin{aligned}
\int_0^T dt\int_\L \big|\mb U(t,u)\big|^2 du &\le
\int_0^\d dt\int_\L {\b^2} |\nabla (\jn\star \rho_t)(u)\big|^2 du +
\frac{1}{\sigma(\ve)}\Vert U\Vert_{\LL^2(\sigma(\pi))}^2\\
\ &\ \le {\b^2}\int_0^T dt\int_\L  \big|\nabla \rho_t(u)\big|^2 du +
\frac2{\sigma(\ve)} \mc J_T^\g(\mb W,\pi) <\infty \, ,\\
\int_0^T dt\int_\L \big|\nabla F_t(u)\big|^2 du & \, \le\, 
\int_0^\d dt\int_\L {\b^2} \big|\nabla (\jn\star \rho_t)(u)\big|^2 du
+ \frac{1}{\sigma(\ve)}\Vert F\Vert_{H_0^1(\sigma(\pi))}^2\\
\ &\ \le {\b^2}\int_0^T dt\int_\L  \big|\nabla \rho_t(u)\big|^2 du +
\frac2{\sigma(\ve)} {\mc I}_T^\g(\pi) <\infty \, .
\end{aligned}
\end{equation*}

Let $\big\{U^n=(U^n_1,\cdots,U^n_d) ,\,\; n\!\ge \! 1\big\}\subset (C^{1,1}([0,T]\times \L))^d$ and 
$\big\{F^n ,\,\; n\ge 1\big\}\subset C^{1,2}([0,T]\times \L)$
be two sequences of functions such that $\displaystyle\lim_{n\to +\infty} U^n=U$ in
$\big(L^2\big( [0,T]\times \L\big)\big)^d$, and $\displaystyle\lim_{n\to +\infty} F^n=F$ in $L^2\big( [0,T],H^1(\L)\big)$.
For each integer $n>0$, let $\mb W^n$ be the weak solution of the equation
$$
\partial_t \mb W_t\, =\, -\nabla \rho_t^n +\sigma(\rho_t^n) \big[\b \nabla (\jn  \star \rho_t^n )  \, +\, \mb U^n\big]
\, , \qquad \mb W_0=0\, ,
$$
where $\rho^n$ is the weak solution of
\eqref{eq:V} with $\nabla F^n$ in place of $\mb V$. 
We set $\pi^n (t,du) =
\rho^n(t,u)du$. 

We examine in this paragraph the energy $\mc E^\g(\mb W^n, \pi^n)$. Since
$\rho^n$ solves the equation \eqref{eq:V}, by Lemme \ref{rep0}, for any $G\in \mc C_0^{1}(\L)$ and any $t\in [0,T]$,
\begin{equation*}
 \begin{aligned}
\< \mb W^n_t ,\nabla G \> &\; 
= \< \pi_t^n ,G \> - \< \pi_0^n ,G \>
+  \int_0^t ds \<(-\nabla F_s^n+U_s^n),\nabla G \>\\
\ &\; = \< \pi_t^n ,G \> - \< \pi_0^n ,G \>\, .
 \end{aligned}
\end{equation*}
On the other hand, since $\sigma(\pi^n)$ is bounded above by $1/2$, from Lemma \ref{compar1}
\begin{equation*}
 \begin{aligned}
{\mc I}_T^\g(\pi^n) &
= \frac{1}{2}\int_0^T\ dt\;
\< \sigma(\rho^n_t),\nabla F^n_t \cdot  \nabla F^n_t \>\\
\ & \leq \mc J^\g(\mb W^n,\pi^n) = \frac{1}{2}\int_0^T\ dt\;
\< \sigma(\rho^n_t) , U^n_t \cdot  \nabla U^n_t \>\\
&\ \le C_0 \big\| U^n \big\|_{L^2([0,T]\times \L)}  .
\end{aligned}
\end{equation*}
In particular, $\{{\mc I}_T^\g(\pi^n),\; n\ge 1\}$ and  $\{\mc J_T^\g(\mb W^n,\pi^n),\; n\ge 1\}$ are
uniformly bounded.
Thus, Lemma \ref{lem01},  implies the uniform boundedness of the
sequence $\{\mc Q(\pi^n),\; n\ge 1\}$. 

In order to extract a converging subsequence from the sequence $\big\{(\mb W^n,\pi^n),\; n\ge 1 \big\}$, we need to show
the relative compactness of the set $\big\{(\mb W^n,\pi^n),\; n\ge 1 \big\}$ in the topological space
$D([0,T],\mc M^{d+1})$. For each $n\ge 1$,  denote
by $|\mb W^n|$ (resp. $\|\mb W^n \|$) the variation (resp. total variation)
 of the signed measure $\mb W^n$, and for
 shortness of notation,  denote $\L_T=[0,T]\times \L$. By construction, Schwartz inequality and since $\sigma(\cdot)$
 is bounded by 1/2, 
 \begin{equation*}
 \begin{aligned}
& \sup_{n\ge 1}\sup_{0\le t\le T} \big(\|\mb W^n_t\|+ \|\pi^n_t\|\big)
\le \sqrt{2 T}\sup_{n\ge 1}\Big\{ \|\nabla \rho^n\|_{L^2(\L_T)}\\
\ &\qquad\qquad\qquad\qquad\qquad
+\frac\b2\|\nabla (\jn  \star \rho^n\|_{L^2(\L_T)}
\, +\, \frac{1}2\|\mb U^n\|_{L^2(\L_T)}\Big\}\, <\, \infty\,  .
  \end{aligned}
\end{equation*}
Moreoever, for any $s,t\in [0,T]$,
any $\mb V \in \big(\mc C^{1}(\L) \big)^d$ and any $G \in \mc C^{2}_0(\L)$,
\begin{equation*}
 \begin{aligned}
& \big|\< \mb W^n_t ,\mb V \> +  \<\pi^n_t , G \> -\< \mb W^n_s ,\mb V \> - \<\pi^n_s , G \>\big|\\
&\ \ 
\le \sqrt{|t-s|}  \big\|\mb V\big\|_{L^2(\L)}
\Big\{ \|\nabla\rho^n\|_{L^2(\L_T)}
+\frac{\b}{2}\|\nabla (\jn  \star \rho^n\|_{L^2(\L_T)}
 +\|\mb U^n\|_{L^2(\L_T)}\Big\}\\
&\ \ 
+ \sqrt{|t-s|}  \big\|G\big\|_{L^2(\L)}
\Big\{ \|\nabla\rho^n\|_{L^2(\L_T)}
+{\b}\|\nabla (\jn  \star \rho^n\|_{L^2(\L_T)}
 +\|\nabla F^n\|_{L^2(\L_T)}\Big\} \\
&\quad
\le \sqrt{|t-s|} \, M\, \Big\{ \big\|\mb V\big\|_{L^2(\L)} + \big\|G\big\|_{L^2(\L)}\Big\}
  \end{aligned}
\end{equation*}
where the constant $M= C(\rho, \mb U,F,\b)$ is such that
$$
\sup_{n\ge 1}\Big\{ \big(1+{\b}\big)\|\nabla \rho^n\|_{L^2(\L_T)} +\|\mb U^n\|_{L^2(\L_T)} + \|\nabla F^n\|_{L^2(\L_T)} \Big\}\le M\, .
$$
The relative compactness for the set $\big\{(\mb W^n,\pi^n),\; n\ge 1 \big\}$,
follows from compactness criterium for the Skorohod topology (see  \cite{ek} Theorem 6.3 page 123).

Let $\{(\mb W^{n_k}, \pi^{n_k}):\, k\geq 1\}$ be a subsequence of $\{(\mb W^{n}, \pi^{n}):\, n\geq 1\}$ converging
to some $(\mb W^* ,\pi^*)$ in $D([0,T],\mc M^{d+1})$. We claim that
$(\mb W^*,\pi^*) = (\mb W,\pi)$
and $\displaystyle\lim_{k\to \infty} \mc J_T^\g \big(\mb W^{n_k},\pi^{n_k}\big)\, =\, 
\mc J_T^\g \big(\mb W,\pi\big)$.
On the one hand, $\{\pi^{n_k}:\, k\geq 1\}$ converges weakly to $\pi^*$ in $L^2\big( \L_T\big)$.   
Since $\mc J_T^\g(\mb W^n, \pi^n)$ is uniformly bounded, by Lemma \ref{g06p} and Lemma \ref{compar1}, $\pi^{n_k}$ converges to
$\pi^*$ strongly in $L^2(\L_T)$.
For every $G$ in $\mc
C^{1,2}_0(\L_T)$, we have
\begin{equation*}
\begin{split}
& \langle\pi^{n_k}_T,G_T\rangle - \langle\gamma,G_0\rangle = 
\int_0^T dt\;\langle\pi^{n_k}_t,\partial_tG_t\rangle\\
&\qquad\qquad\qquad
+\int_0^T dt\;\langle \pi^{n_k}_t,
\Delta G_t\rangle
\; \;-\; \int_0^T dt
\int_{\Gamma}b(r) \,  \text{\bf n}_1(r)\, (\partial_{1}F_t)(r) \, dS(r) \\
&\qquad\qquad\qquad
\,  +\int_0^T \langle \nabla G_t,\sigma(\rho_t^{n_k})
\big[ {\b}\nabla (\jn\star \rho_t^{n_k}) + \nabla F_t^{n_k}\big]\rangle \, dt.
\end{split}
\end{equation*}
Letting  $k\to \infty$, we obtain
that $\pi^*$ is a  weak solution of equation \eqref{eq:V} with $\mb V=\nabla F$. Thus, by uniqueness of weak solutions of \eqref{eq:V},
$\pi^*=\pi$.
On the other hand, $\{\mb W^{n_k}:\, k\geq 1\}$ converges weakly to $\mb W^*$ in $L^2\big( \L_T\big)$. Since $\pi^{n_k}$
converges to $\pi^*$ strongly in $L^2(\L_T)$, 
for every $\mb V$ in $(\mc
C^{1,1}(\L_T))^d$ and any $k\ge 1$, we have
\begin{equation*}
\begin{aligned}
&\< \mb W_T^{n_k}, \mb V_T\> \, 
=\, \int_0^T dt \, \< \mb W_t^{n_k}, \partial_t \mb V_t\> \\
&\qquad\qquad
+ \int_0^T dt \, \< \pi_t^{n_k}, \nabla \cdot \mb V_t\>
\;-\; \int_0^T dt
\int_{\Gamma}b(r) \,  \text{\bf n}_1(r)\, V_1(t, r) \, dS(r)  \\
&\qquad\qquad
+  \b\int_0^T \langle \sigma( \rho_t^{n_k}), \mb V_t\cdot \big[\nabla (\jn\star  \rho_t^{n_k})+\mb U^{n_k}\big]\rangle  dt\, .
\end{aligned}
\end{equation*}
Letting  $k\to \infty$, we obtain that $\mb W^*$ is a  weak solution of the equation \eqref{rep2} associated to $\rho^*$ and $\mb U$, where
$\rho^*$ is the density of $\pi^*$. This proves the first part of the claim.
To conclude the proof it remains to prove that $\displaystyle \lim_{k\to \infty}\mc J_T^\g(\mb W^{n_k},\pi^{n_k})=\mc J_T^\g(\mb W,\pi)$.
The sequence $(\rho^{n_k})_{k>0}$ converges to $\rho$ strongly in $L^2(\L_T)$ and the sequence $(\mb U^{n_k})_{k>0}$ converges to
$\mb U$ in $L^2(\L_T)$.  Taking into account that $\rho$ is bounded and $\sigma$ is Lipschitz, we obtain
\begin{equation*}
\begin{aligned}
\lim_{k\to \infty} \mc J_T^\g(\mb W^{n_k},\pi^{n_k})&
=\lim_{k\to \infty} \frac{1}{2}\int_0^T\ dt\;
\langle \sigma(\rho^{n_k}_t), \mb U^{n_k}_t \cdot  \mb U^{n_k}_t\rangle\\
\ & \  = \frac{1}{2}\int_0^T\ dt\;
\langle \sigma(\rho_t),\mb U_t \cdot  \mb U_t\rangle \, =\, \mc J_T^\g(\mb W,\pi)\, .
\end{aligned}
\end{equation*}
This concludes the proof.
\cqfd

\medskip
\section{large deviations for the empirical density}\label{ldp-ed}
In this section we prove Theorem \ref{s02}.
As we mentioned in the introduction, the large deviations principle for the empirical density 
can be recovered from the one for the current. Indeed, it follows from Theorem \ref{gd-(d,c)} and 
the contraction principle, that the rate function ${\widetilde {\mc I}}_T^\g$ for the 
empirical density is given by the variational formula
\begin{equation}
\label{empi-dev}
{\widetilde {\mc I}}_T^\g (\pi) \;=\; \inf_{\mb W\, : (\mb W,\pi) \in \mf A_\g} 
\mc J_T^\g (\mb W, \pi)\;,
\end{equation}
where $\mf A_\g$ is defined by \eqref{eq:4.5}. To conclude the proof of Theorem \ref{s02}, we then need to show that the functional
${\mc I}_T^\g$ in \eqref{3:Ib} coincides with the functional
${\widetilde {\mc I}}_T^\g$  on the whole
space $ D([0,T], \mc M)$.

Fix $\pi\in  D([0,T], \mc M)$. From Lemma \ref{compar1}, we have
\begin{equation}\label{lb-1}
 {\mc I}_T^\g (\pi )\, \le\, {\widetilde {\mc I}}_T^\g(\pi )\, .
\end{equation}
Conversely, suppose that ${\mc I}_T^\g (\pi )< \infty$, then
by Lemma \ref{rep0}, there exists $F\in H^1_0(\sigma(\pi))$
such that $\pi(t,du)=\rho(t,u)du$ and $\rho$ solves the equation \eqref{eq:V} with $\mb V=\nabla F$.
Let $\mb W^F$ the weak solution of the equation
\eqref{rep2} with $\mb U=\nabla F$, it is easy to check that $(\mb W^F, \pi)\in \mf A_\g$ and
\begin{equation}\label{lb-2}
{\widetilde {\mc I}}_T^\g(\pi )\, \le\,  \mc J^\g_T (\mb W^F, \pi)\, =\, {\mc I}_T^\g (\pi )\, .   
\end{equation}

We deduce from \eqref{lb-1} and \eqref{lb-2}, that for each $\pi\in  D([0,T], \mc M)$,
${\mc I}_T^\g (\pi ) <+\infty$ if and only if ${\widetilde {\mc I}}_T^\g(\pi ) <+\infty$
and then ${\widetilde {\mc I}}_T^\g(\pi )\, =\, {\mc I}_T^\g (\pi )$ which concludes the proof of \eqref{empi-dev}.
\qed

\medskip

\end{document}